\documentclass[12pt]{article}
\usepackage{amsmath,amsfonts,amssymb}
\usepackage{alltt}

\usepackage{amsmath,amsfonts,ifthen,fullpage,enumerate}  
\usepackage{mathrsfs}
\usepackage{hyperref}
\usepackage{mathtools}
\usepackage{tikz-cd}
\usetikzlibrary{arrows.meta}
\tikzset{>={Latex[width=2mm,length=2mm]}}

\usepackage{colortbl}

\usepackage{enumerate}

\usepackage{amsmath,amsfonts,amssymb}
\usepackage{float}

\usepackage{makecell}  

\usepackage{graphicx}        

\usepackage{tikz-cd}

\usepackage{tcolorbox}
\def\greyboxvar#1#2
{\begin{tcolorbox} [boxrule=-5pt, boxsep=-0.1cm, colback=white!95!black, 
	width=#2] #1\end{tcolorbox}}

\def\greybox#1
{
	\begingroup
\setlength{\tabcolsep}{2pt} 
\renewcommand{\arraystretch}{1.3} 
	\begin{tabular}{c}
\cellcolor{gray!25}#1\end{tabular}
\endgroup
}

\usepackage[all,pdf]{xy}
\usepackage{lmodern,amssymb}   

\def\spam{\mathop{\rm span}\nolimits}
\def\gcd{\mathop{\rm gcd}\nolimits}
\def\lcm{\mathop{\rm lcm}\nolimits}

\newcommand\divides{\ensuremath{\bigm|}}

\def\rank{\mathop{\rm rank}\nolimits}

\def\diag{\mathop{\rm diag}\nolimits}

\def\pmat#1{\begin{pmatrix}#1\end{pmatrix}}
\def\mat#1{\begin{matrix}#1\end{matrix}}
\def\question#1{{\bf Question: }#1}
\def\question#1{}
\def\sperp{\text{\tiny$\perp$}} 

\def\cR{{\cal R}}

\def\CC{\mathbb{C}}

\def\HH{\mathbb{H}}

\def\ZZ{\mathbb{Z}}

\def\Cd{\C^d}

\def\Hd{\HH^d}

\def\C{\mathbb{C}}

\newtheorem{theorem}{Theorem}[section]
\newtheorem{corollary}{Corollary}[section]
\newtheorem{lemma}{Lemma}[section]
\newtheorem{example}{Example}[section]
\newtheorem{remark}{Remark}[section]
\newtheorem{proposition}{Proposition}[section]
\newtheorem{definition}{Definition}[section]

\newenvironment{proof}{{\noindent \it
Proof.}}{\hfill$\Box$\medskip}
\newif\ifdraft\def\draft{\drafttrue\hoffset=.8truecm\showlabeltrue
\def\comment##1{{\bf comment: ##1}}
\headline={\sevenrm \hfill \ifx\filenamed\undefined\jobname\else\filenamed\fi%
(.tex) (as of \ifx\updated\undefined???\else\updated\fi)
 \TeX'ed at {\hour\time\divide\hour by 60{}%
\minutes\hour\multiply\minutes by 60{}%
\advance\time by -\minutes
\the\hour:\ifnum\time<10{}0\fi\the\time\  on \today\hfill}}
}

\def\inpro#1{\langle#1\rangle}
\def\ip#1{\langle\kern-.28em\langle#1\rangle\kern-.28em\rangle_\nu}

\def\cD{{\cal D}}

\def\openR{{{\rm I}\kern-.16em {\rm R}}}

\let\ga\alpha

\let\gb\beta

\let\gl\lambda

\let\gs\sigma

\let\go\omega
\let\gO\Omega
\let\ga\alpha

\let\gb\beta

\let\gs\sigma
\def\inpro#1{\langle#1\rangle}

\def\Stab{\mathop{\rm Stab}\nolimits}

\def\ker{\mathop{\rm ker}\nolimits}

\def\Iff{\hskip1em\Longleftrightarrow\hskip1em}
\def\Implies{\hskip1em\Longrightarrow\hskip1em}

\def\formeq{\the\sectionno.\the\equationno}  
\def\elabel#1/#2/#3/{\global\advance\equationno by 1 %
\ifx#1\empty\else\emember#1%
\ifshowlabel\marginal{\string#1}\fi\fi%
\ifmmode\eqno{#3(\formeq#2)}\else#3\formeq#2\fi} 

\def\makeblanksquare#1#2{
\dimen0=#1pt\advance\dimen0 by -#2pt
      \vrule height#1pt width#2pt depth0pt\kern-#2pt
      \vrule height#1pt width#1pt depth-\dimen0 \kern-#1pt
      \vrule height#2pt width#1pt depth0pt \kern-#2pt
      \vrule height#1pt width#2pt depth0pt
}

\title{\bf 
The lattice of normal reflection subgroups of an irreducible reflection group
}

\author{Shayne Waldron\\ 
 \\
Department of Mathematics \\ University of Auckland\\
Private
Bag 92019, Auckland, New Zealand\\
e--mail: waldron@math.auckland.ac.nz}

\begin{document}

\maketitle 

\begin{abstract}

The 
reflection subgroups of a reflection group have
a natural lattice structure given by the reflections that they contain.
By considering the 
	conjugation action 
	orbits of the reflection subgroups for a given root line,
we are able to give an essentially combinatorial way to calculate the lattice 
of all the normal reflection subgroups of a given (finite irreducible) reflection 
group, and natural generators for them.
Moreover, we observe 
	that every complex reflection group is a normal subgroup of 
the unique maximal reflection 
group which shares its collineation group. 
Hence, we are able to present the 
Shephard-Todd classification of the complex reflection groups
as a collection of maximal reflection groups, together with appropriate 
	(collineation preserving) normal reflection subgroups.

We investigate the quotients by the normal reflection subgroups, which are known to be
reflection groups.
We also consider the action of the collineation group on some appropriate small systems of lines,
and how these results extend 
		to quaternionic reflection groups.
Some novel techniques are introduced, including 
	the notion of a ``hidden reflection'', a combinatorial-geometric 
description of the reflection subgroups and the size of their
	conjugacy class,
	and the role played by 
the abelianisation of the reflection group.

\end{abstract}

\bigskip
\vfill

\noindent {\bf Key Words:}
complex reflection groups,
quaternionic reflection groups,
collineation groups,
normal reflection subgroups,
roots of reflections,
lattices of subgroups,
parabolic subgroups,
reflections, hidden reflections,
maximal reflection groups.
SICs (equiangular lines),
Shephard-Todd classification,
abelianisation of a group,
dominos

\bigskip
\noindent {\bf AMS (MOS) Subject Classifications:}
primary
20C25, \ifdraft Projective representations and multipliers \else\fi
20F55, \ifdraft Reflection and Coxeter groups (group-theoretic aspects) \else\fi
20G20, \ifdraft Linear algebraic groups over the reals, the complexes, the quaternions \else\fi
51F15, \ifdraft Reflection groups, reflection geometries [See also 20H10, 20H15] {For Coxeter groups, see 20F55} \else\fi
\quad
secondary
13A50, \ifdraft Actions of groups on commutative rings; invariant theory [See also 14L24] \else\fi
15B33, \ifdraft (Matrices over special rings (quaternions, finite fields, etc.) \else\fi
51M15. \ifdraft Geometric constructions in real or complex geometry \else\fi

\vskip .5 truecm
\hrule
\newpage

\section{Introduction}

A reflection group $G$ is a group which is generated by reflections
(see \cite{C76}, \cite{C80}, \cite{BMR98} \cite{LT09}, \cite{B10}, \cite{P23}).
Therefore, its reflection subgroups 
(subgroups which are reflection groups)
naturally 
form a lattice with the order relation given by inclusion of the reflections
in the subgroups, and maximal element $G$ and minimal element $1$ 
(generated by the empty set of reflections). 
Since the intersection of two reflection subgroups may not be a 
reflection group, the meet in this lattice 
may or may not be the intersection of the subgroups,
i.e., it may not be a sublattice of the lattice of all subgroups of $G$
(though the inclusions and the join are the same).

We are interested in the (finite irreducible) complex and quaternionic 
reflection groups, which were classified by Shephard and Todd \cite{ST54}
and Cohen \cite{C80}. These groups have all been implemented in 
computer algebra systems such as Magma, and so their reflection 
subgroups can be calculated \cite{T12}, \cite{DPR13}, \cite{RS26}. 
Two classes of reflection subgroups have
been studied in particular:
\begin{itemize}
\item The parabolic subgroups, i.e., those which fix pointwise a subset of the
vector space that they act on. 
It was proved that these are reflection groups by Steinberg \cite{S64} 
in the complex case, and by \cite{BST23} in the quaternionic case.
The nontrivial parabolic subgroups of $G$ are not normal 
and are of lower rank (Proposition \ref{Parabolicnotnormal}).
\item The normal reflection subgroups. In \cite{BBR02} 
(see 
		\cite{AW23}), it
was shown that the quotient of a complex reflection group by a normal 
reflection subgroup gives a reflection group. 
If $G$ is a primitive complex reflection group, then its
nontrivial normal reflection subgroups are irreducible and 
		hence of full rank (Theorem \ref{Galatticethm}).
\end{itemize}

In this paper, 
we study the lattice of normal reflection subgroups of a finite complex or 
quaternionic irreducible reflection group. Key properties of a reflection 
group $G$ that we consider are 
\begin{itemize}
\item The collineation group, i.e., the action of $G$ on lines.
\item The reflection subgroups of $G$ for a given root, i.e., 
	the maximal rank one parabolic subgroups, up to conjugacy in $G$.
\end{itemize}
This information is a little more than knowing the order of the reflection group and its centre, 
and how many reflections of each order it contains -- properties
which are usually tabulated 
(explicitly or implicitly) 
in lists of reflection groups.
From this information, we are led to our main insights:
\begin{itemize}
\item The lattice of normal reflection subgroups is completely 
determined by the orbits of the reflection subgroups for the roots. 
This provides a natural indexing for the subgroups,
which can be calculated, together with generators, in 
an essentially combinatorial way 
(Lemma \ref{normalrefslatticelemma}, Theorem \ref{Galatticethm}).
\item Every reflection group is a normal subgroup of a unique maximal
	reflection group which shares its collineation group.
Consequently, the classification of reflection groups can be 
described as the maximal reflection groups, together with their
collineation preserving normal reflection subgroups
(Lemma \ref{normalsgmaxreflectgp}, Theorem \ref{complexrefgpclass}).
\end{itemize}
Much of the paper is devoted to calculating the maximal reflection groups and
their collineation preserving normal reflection subgroups. This 
splits into the complex case (easier, with more normal reflection subgroups)
and the quaternionic case (a little more involved, with fewer normal reflection
subgroups). Within these cases there are the primitive groups (finitely many of them)
and the imprimitive groups (infinite families).

We will assume some basic familiarity with reflection groups, and in particular
the Shephard-Todd classification of the complex reflection groups into
three infinite families of imprimitive groups $G(m,p,d)$ 
and the primitive groups $G_4,G_5,\ldots,G_{37}$ (see \cite{LT09}).

\section{Reflections and their orbits under conjugation}

A {\bf reflection} $g$ on $\Cd$ or $\Hd$ is an invertible linear map which pointwise 
fixes a subspace of dimension $d-1$ (a hyperplane), is not the identity, and hence is characterised by
\begin{equation}
\label{rankcondition}
\rank(I-g) =1.
\end{equation}
It is immediate from (\ref{rankcondition}) that
\begin{itemize}
\item Conjugation within the general linear group preserves reflections.
\end{itemize}
Thus, we may assume, without loss of generality, that finite reflection 
groups are unitary.
A unitary reflection
can be specified by the hyperplane that if fixes, or its orthogonal 
complement, which is called the {\bf root line}. Any nonzero vector $a$ in the
root line is called a 
{\bf root} ({\bf vector}). 
A (unitary) reflection $g$ is therefore 
determined by a root vector $a$ and its action on the root line, i.e., 
$$ ga = a\xi, $$
where $\xi$ is a unit scalar in $\CC$ or $\HH$, respectively.
For reflections on $\Hd$, we treat $\Hd$ as a right vector space ($\HH$-module),
and (in both cases) the above reflection is given by the formula
\begin{equation}
\label{raxidefn}
g=r_{a,\xi}:= I-{a(1-\xi)a^*\over\inpro{a,a}},
\end{equation}
with $(a,\xi)$ called a {\bf root} for the reflection.
Clearly a reflection $g=r_{a,\xi}$ has order $n$ if and only if $\xi$ is 
a primitive $n$-th root of unity, i.e., $n$ is the smallest power with
$\xi^n=1$.

Throughout, $G$ will be a finite reflection group.
Associated with a reflection in $G$, its root line, or a root $a$ for it, is
the subgroup of all reflections in $G$ with that root line (and the identity), i.e.,
\begin{equation}
\label{Radefn}
	R_a = R_{a,G} :=\{ r_{a,\xi} : r_{a,\xi} \in G\},
\end{equation}
which we call the {\bf reflection subgroup} for the root $a$, reflection $r_{a,\xi}$, root line $a\HH$, etc.
These are the maximal parabolic subgroups of rank $1$
(they fix the hyperplane $a^\perp$), 

We also define the following subgroups of the unit scalars under multiplication
\begin{equation}
\label{HaKadefn}
H_a =H_{a,G} := \{ \gl\in U(\HH): r_{a,\gl}\in G\}, 
\qquad K_a = K_{a,G} := \{ \gl\in U(\HH): a\gl \in Ga\},
\end{equation}
where $H_a$ is a normal subgroup of $K_a$. 
We observe that $H_a\to R_a:\gl\mapsto r_{a,\gl}$ is an isomorphism,
and so the elements of $R_a$ can be indexed
\begin{equation}
\label{RadefnII}
	R_a = R_{a,G} = \{r_{a,\gl}:\gl\in H_{a,G}\}. 
\end{equation}

The (unitary) matrices of the group $G$ act on the reflection subgroups for the roots via
conjugation
\begin{equation}
\label{raxi-conjugation}
gr_{a,\xi} g^{-1}= r_{ga,\xi} \Implies
R_a^g:= gR_a g^{-1} = R_{ga}, \qquad g\in G.
\end{equation}
A reflection group $G$ is therefore uniquely determined by the conjugacy
classes 
$$ R_a^G :=\{ R_a^g : g \in G\}. $$
of the reflection subgroups for its roots (root lines). 
We observe that
\begin{itemize}
\item If $H$ is a reflection subgroup of $G$, and $a$ is a root of
a reflection in $G$, then
$R_{a,H}$ is a subgroup of $R_{a,G}$, possibly the identity 
		(when $H$ has no reflections with root $a$).
\item Moreover, if $H=N$ is normal in $G$, then 
$$ R_{a,N} \subset N \Implies g(R_{a,N})g^{-1}\subset gNg^{-1}=N, 
\qquad g\in G, $$
so that the reflections $ R_{a,N}^G$ are in $N$, where
		$\hat R_a=R_{a,N}$ is a subgroup of $R_a=R_{a,G}$.
\end{itemize}

This structure characterises the normal reflection subgroups.

\begin{lemma}
\label{normalrefslatticelemma}
Let $G$ be a reflection group, and $R_{a_1}^G,\ldots,R_{a_m}^G$ 
be the orbits of the reflection subgroups for its roots.
Then the normal reflection subgroups $N$ of $G$ are given by 
\begin{equation}
\label{NormalhatRa}
	N=\inpro{\cup_j\hat R_{a_j}^G}, \qquad
\hbox{where each $\hat R_{a_j}$ is a subgroup of $R_{a_j}$},
\end{equation}
and these form a sublattice of the lattice of reflection subgroups, 
with the meet and join given by 
\begin{align}
\inpro{\cup_j\hat R_{a_j}^G} \wedge \inpro{\cup_j\tilde R_{a_j}^G}
&=\inpro{\cup_j(\hat R_{a_j}\cap \tilde R_{a_j})^G},
\label{normalrefslatticeopsI} \\
\inpro{\cup_j\hat R_{a_j}^G}\vee \inpro{\cup_j\tilde R_{a_j}^G}
&=\inpro{\cup_j\hat R_{a_j}^G} \inpro{\cup_j\tilde R_{a_j}^G}
= \inpro{ \cup_j(\hat R_{a_j}\tilde R_{a_j})^G}.
\label{normalrefslatticeopsII}
\end{align}
Moreover, the following subsets form sublattices
\begin{enumerate}[\rm (i)]
\item The subgroups with a given collineation group.
\item The primitive subgroups.
\item The imprimitive subgroups.
\item The reducible subgroups.
\item The subgroups of a given rank (or a sequence of successive ranks).
\end{enumerate}
\end{lemma}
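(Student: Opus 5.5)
The plan is to first show that the normal reflection subgroups are exactly the groups of the form (\ref{NormalhatRa}). Then I will verify the two lattice formulas, and finally check the closure properties (i)--(v).

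\emph{Step 1: characterisation.} Let $N$ be a normal reflection subgroup and put $\hat R_{a_j}=R_{a_j,N}$. By the observation preceding the lemma, every conjugate $\hat R_{a_j}^g=R_{ga_j,N}$ lies in $N$. Conversely, every reflection of $N$ is a reflection of $G$, so its root line is $g a_j\HH$ for some $j$ and $g\in G$. Hence the reflection lies in $R_{ga_j,N}=gR_{a_j,N}g^{-1}$, by (\ref{raxi-conjugation}) applied to $N=gNg^{-1}$. Since $N$ is generated by its reflections, $N=\inpro{\cup_j\hat R_{a_j}^G}$.

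For the converse, a group generated by a $G$-conjugation-invariant set of reflections is normal, so any choice of subgroups $\hat R_{a_j}\le R_{a_j}$ gives a normal reflection subgroup. One subtlety needs checking: $N$ may contain more reflections than those generating it, so $R_{a_j,N}$ may strictly contain $\hat R_{a_j}$. Thus different choices can give the same $N$. I will state the indexing with the canonical choice $\hat R_{a_j}=R_{a_j,N}$, and note that the general form still describes every such group.

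\emph{Step 2: meet and join.} For the join, the product $N_1N_2$ of two normal subgroups is a normal subgroup. It is generated by the reflections of both, so it is the smallest reflection subgroup containing both, and it is normal. Its generating set is $\cup_j(\hat R_{a_j}^G\cup\tilde R_{a_j}^G)$. The group $\hat R_{a_j}\tilde R_{a_j}$ lies in $R_{a_j}$ and is generated by $\hat R_{a_j}\cup\tilde R_{a_j}$, so conjugating gives (\ref{normalrefslatticeopsII}).

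For the meet, take the canonical indexings. The meet in the lattice of reflection subgroups is the subgroup generated by the reflections common to $N_1$ and $N_2$. The reflections with root $ga_j$ lying in both are $g(\hat R_{a_j}\cap\tilde R_{a_j})g^{-1}$, which gives (\ref{normalrefslatticeopsI}); this subgroup is normal by Step 1. So the normal reflection subgroups are closed under both operations and form a sublattice.

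Here is the point I expect to be the main obstacle. If a non-canonical $\hat R$ is used, the intersection formula can undercount, because common reflections might arise only from the generated groups. I will therefore insist on the canonical indexing for (\ref{normalrefslatticeopsI}), and observe that (\ref{normalrefslatticeopsII}) holds for any indexing.

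\emph{Step 3: sublattices (i)--(v).} These rely on monotonicity properties, working within the normal reflection subgroups.

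(i) The collineation group of $N$ is determined by which root lines carry reflections of $N$. The meet and join of two subgroups with the same collineation group lie between groups with that collineation group. So I will argue that the collineation group is monotone under inclusion, which gives closure.

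(v) Rank is monotone, and both the meet and the join lie between subgroups of rank in the given interval.

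(ii)--(iv) By Theorem \ref{Galatticethm}, nontrivial normal reflection subgroups of a primitive $G$ are irreducible of full rank. For imprimitive or reducible subgroups, I will note that a system of imprimitivity (respectively an invariant subspace decomposition) preserved by two normal subgroups is preserved by their join. For the meet, I will use that it contains fewer reflections, so the same decomposition works, with the trivial group included by convention.

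Checking that the join of two imprimitive subgroups stays imprimitive is the delicate point: their systems of imprimitivity could differ. If that happens, I will fall back on normality, taking the $G$-orbit of the system, together with the explicit structure of $G(m,p,d)$.
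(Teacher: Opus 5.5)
Your Steps 1 and 2 are sound and follow the paper's argument. On the meet you are more careful than the paper. Its proof takes for granted that the reflections of $\inpro{\cup_j\hat R_{a_j}^G}$ are exactly $\cup_j\hat R_{a_j}^G$, and that fails for quaternionic groups. In $P_3$ the labels $(1,\inpro{i})$ and $(1,\inpro{j})$ both give $P_2$, yet (\ref{normalrefslatticeopsI}) would return $G^{(1,\inpro{-1})}=K$. Restricting (\ref{normalrefslatticeopsI}) to the canonical labels $\hat R_{a_j}=R_{a_j,N}$ is exactly the right repair.

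The genuine gap is Step 3, and it cannot be closed, because (i)--(v) are false in the generality stated. A property inherited by subgroups gives closure under meets only, and one inherited by overgroups gives closure under joins only. Your ``lies between'' arguments supply just one of the two bounds a sublattice needs. The paper's own lattices give counterexamples:
\begin{enumerate}[\rm (i)]
\item and (ii). In $G_{11}$, the groups $G_{13}=G_{11}^{(2,2,1)}$ and $G_{10}=G_{11}^{(1,4,3)}$ are primitive with collineation group $S_4$. By (\ref{Galatticeops}) their meet is $G_{11}^{(1,2,1)}=G(4,2,2)$, which is imprimitive with collineation group $C_2\times C_2$. This also contradicts the caption of Figure \ref{G11-lattice}. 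Your auxiliary claim that the collineation group is determined by the root lines fails too: $G_8=G_{11}^{(1,4,1)}$ and $G_{11}^{(1,2,1)}$ have the same six root lines but collineation groups $S_4$ and $C_2\times C_2$.
\setcounter{enumi}{2}
\item $G_{28}^{(1,2)}$ and $G_{28}^{(2,1)}$ are both the imprimitive $G(2,2,4)$, yet their join is the primitive $G_{28}$. So no appeal to the $G$-orbit of a system of imprimitivity can rescue the join.
\item In $G(4,2,2)=G^{(2,2,2)}$, the reducible subgroups $G^{(1,1,2)}\cong G^{(1,2,1)}\cong C_2\times C_2$ have join $G^{(1,2,2)}\cong G(4,4,2)$, which is irreducible.
\item If $G$ is irreducible, the span of the roots of a nontrivial normal $N$ is $G$-stable, so every such $N$ has full rank; still $G_{12}\wedge G_8=G_{11}^{(1,1,1)}=1$. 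If $G=\inpro{\diag(-1,1),\diag(1,-1)}$, its two rank-one normal reflection subgroups join to rank two.
\end{enumerate}

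The paper's own proof of (i)--(v) is the same one-line monotonicity remark and has the same defect. What you can actually prove is the one-sided version. Primitive subgroups, and subgroups with a fixed image in $G_{\rm coll}$, are closed under joins; the latter set is also convex. Reducible and non-primitive subgroups are closed under meets.
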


\begin{proof}
Clearly $N=\inpro{\cup_j\hat R_{a_j}^G}$ is a normal reflection subgroup, 
since it is generated by a set of reflections which are closed under conjugation by $G$,
and we observed that all normal reflection subgroups must have this form.

Since the reflections 
in $\inpro{\cup_j\hat R_{a_j}^G} \wedge \inpro{\cup_j\tilde R_{a_j}^G}$
are $\cup_j(\hat R_{a_j}\cap \tilde R_{a_j})^G$, we obtain the 
	formula for the meet.
For the join, we first observe that since $\hat H= \inpro{\cup_j\hat R_{a_j}^G}$
and $\tilde H=\inpro{\cup_j\tilde R_{a_j}^G}$ are normal subgroups of $G$, so
is their product. Thus it suffices to show that their product is
the reflection subgroup
$$ H= \inpro{\cup_j(\hat R_{a_j}\tilde R_{a_j})^G}. $$
Since $\hat R_{a_j}$ and $\tilde R_{a_j}$ are subgroups of
$\hat R_{a_j}\tilde R_{a_j}$, we have that $\hat H\tilde H\subset H$.
On the other hand, $\hat H\tilde H$ contains the reflections which
generate $H$, so that $\hat H\tilde H=H$.

The given subsets form sublattices since the property defining them 
increases or decreases as one moves down the lattice, 
e.g., the collineation group of a subgroup 
(see \S\ref{collineationsection}) is contained within the collineation group of the group, and the rank of a subgroup is less than or equal the rank of the group.
\end{proof}

In the most general setting, i.e., for quaternionic reflection groups, we index
the normal subgroups of $G$ by the reflection subgroups that define them, i.e., 
\begin{equation}
\label{Quaternionsubgrouplabels}
N=\inpro{\cup_j\hat R_{a_j}^G} = G^{(\hat R_{a_1},\ldots, \hat R_{a_m})}. 
\end{equation}
We will say that $\hat R_{a_j}^G$ is a {\bf split orbit} of $N$ if it
is not an $N$-orbit, i.e.,
\begin{equation}
\label{splitorbitdef}
\hat R_{a_j}^N\ne \hat R_{a_j}^G.
\end{equation}

At the outset, 
it is not clear whether a normal reflection subgroup $N$ can have more than one label,
i.e., come from different choices of subgroups $\{\hat R_{a_j}\}$.
We will see that normal reflection subgroups of complex reflection groups do have a unique label,
whereas for quaternionic reflection groups this is not always the case
(Examples \ref{Pgroups-example} and \ref{typeOexample}).


If $G$ is a complex reflection group, 
then $R_{a_j}\cong H_{a_j}\subset\CC^*$ is a cyclic group of order $k_j$,
and so $R_{a_j}$ and its subgroups can be indexed by their orders 
$\ga_j$, which divide $k_j$, i.e., 
\begin{equation}
\label{Complexsubgrouporderlabels}
N=\inpro{\cup_j\hat R_{a_j}^G} = G^{(\ga_1,\ldots,\ga_m)}, \qquad
\ga_j:=|\hat R_{a_j}|,\quad \ga_j\divides k_j,
\end{equation}
where 
\begin{equation}
\label{hatRjformula}
	\hat R_{a_j} 
	= (R_{a_j})^{k_j/\ga_j}
= \{ g^{k_j/\ga_j} : g\in R_{a_j}\}
	= \{ g\in R_{a_j} : g^{\ga_j}=1 \}.
\end{equation}
The lattice operations (\ref{normalrefslatticeopsI}) and (\ref{normalrefslatticeopsII}), 
then take the 
following simple and insightful form
\begin{equation}
\label{Galatticeops}
G^\ga \wedge G^\gb = G^{\gcd(\ga,\gb)}, 
\qquad G^\ga \vee G^\gb = G^\ga G^\gb = G^{\lcm(\ga,\gb)},
\end{equation}
where the gcd (greatest common divisor) and lcm (least common multiple) 
are taken coordinatewise.
The uniqueness of the label $(\ga_1,\ldots,\ga_m)$ of 
(\ref{Complexsubgrouporderlabels}) follows from the facts
\begin{itemize}
\item If $G$ is a complex reflection group generated by the reflections $\cR$,
then any reflection in $G$ is conjugate to a power of a reflection in $\cR$
		(Lemma 4.11 (iii) \cite{C76}).
\item Each $\hat R_{a_j}$ is a group, and hence is closed under taking powers.
\end{itemize}
In effect,
the reflections in $G^\ga$ are precisely $\cup_j\hat R_{a_j}^G$ 
(see Example \ref{Galphareflects} for details). 
The first fact is not true for quaternionic reflection groups 
(see Examples \ref{Pgroups-example} and \ref{typeOexample}).

Therefore, for complex reflection groups, we have a very simple
description of the lattice of normal reflection subgroups.

\begin{theorem}
\label{Galatticethm}
Let $G$ be an irreducible complex reflection group,
and $R_{a_1}^G,\ldots R_{a_m}^G$, with $k_j=|R_{a_j}|$,
be the orbits of of the reflection subgroups for its roots.
Then the lattice of normal reflection subgroups of $G$ is isomorphic 
to the lattice of divisors of $(k_1,\ldots,k_m)$, with the
operations (\ref{Galatticeops}).
In particular, there are 
$$ \gs_0(k_1)\gs_0(k_2) \cdots \gs_0(k_m) $$
normal reflection subgroups, where $\gs_0(n)$ is the number of divisors of $n$.

Moreover, if $G$ is primitive, then every nontrivial normal subgroup is irreducible.
\end{theorem}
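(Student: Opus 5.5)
The plan is to build on Lemma \ref{normalrefslatticelemma}. That lemma says every normal reflection subgroup has the form $N=\inpro{\cup_j\hat R_{a_j}^G}$ with $\hat R_{a_j}\le R_{a_j}$. Since $R_{a_j}\cong H_{a_j}$ is a finite subgroup of $\CC^*$, it is cyclic of order $k_j$. Its subgroups are therefore exactly the $\hat R_{a_j}$ of (\ref{hatRjformula}), one for each divisor $\ga_j$ of $k_j$. This gives a surjective map $(\ga_1,\ldots,\ga_m)\mapsto G^{(\ga_1,\ldots,\ga_m)}$ from the divisor lattice of $(k_1,\ldots,k_m)$ onto the normal reflection subgroups.

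Compatibility with the lattice operations is immediate. In a cyclic group, the intersection of the subgroups of orders $\ga_j,\gb_j$ has order $\gcd(\ga_j,\gb_j)$, and their product has order $\lcm(\ga_j,\gb_j)$. Substituting into (\ref{normalrefslatticeopsI}) and (\ref{normalrefslatticeopsII}) gives (\ref{Galatticeops}).

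\textbf{Main step: injectivity.} I will show that the reflections of $G^\ga$ are exactly $\cup_j\hat R_{a_j}^G$. Then $G^\ga$ recovers $\hat R_{a_j}=R_{a_j}\cap G^\ga$, hence recovers $\ga_j$.
\begin{enumerate}[\rm (1)]
\item Apply Lemma 4.11 (iii) of \cite{C76} to the reflection group $G^\ga$, with generating set $\cR=\cup_j\hat R_{a_j}^G$. It says any reflection $r\in G^\ga$ is $G^\ga$-conjugate to a power of some $s\in\cR$.
\item Write $s=g r' g^{-1}$ with $r'\in\hat R_{a_j}$ and $g\in G$. Since $\hat R_{a_j}$ is a group, $s^k=g r'^k g^{-1}$ lies in $\hat R_{a_j}^G$. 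A further conjugation by an element of $G^\ga\subset G$ keeps it in $\hat R_{a_j}^G$.
\end{enumerate}
I also need the $G$-orbits $R_{a_j}^G$ to be distinct, so that the reflections with a given root line lying in one orbit determine a single index $j$. This is how the orbits were defined. The divisor count $\prod\gs_0(k_j)$ then follows.

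\textbf{Final claim (primitive $G$).} Let $N\ne1$ be a normal reflection subgroup, and suppose $N$ is reducible. Because $G$ is finite and unitary, $\Cd$ decomposes as $V_0\oplus V_1\oplus\cdots\oplus V_r$. Here $V_0$ is the fixed space of $N$, and the $V_i$ are the irreducible components on which $N$ acts nontrivially (a reflection group is a direct product of irreducible reflection groups on these components).

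Since $N$ is normal, $G$ permutes the isotypic components. It permutes the nontrivial components $V_i$ because each root line of $N$ lies in exactly one $V_i$, and $G$ maps roots of $N$ to roots of $N$. It also preserves $V_0$. If there are at least two nonzero summands, $G$ either preserves a proper nontrivial subspace, contradicting irreducibility, or permutes a system of imprimitivity, contradicting primitivity. The case $V_0\ne0$ with a single $V_1$ is covered by the first alternative, since $V_0$ is then a proper $G$-invariant subspace.

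The main obstacle I expect is this last step: justifying that $G$ permutes the $V_i$ as a system of imprimitivity even when the components have equal dimension and possibly isomorphic $N$-actions. I would handle it through root lines. Each $V_i$ is the span of the roots of $N$ lying in it, and the roots of $N$ are permuted by $G$ while the partition of root lines into mutually orthogonal irreducible classes is preserved.
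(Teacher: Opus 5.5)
Your first part (surjectivity from Lemma~\ref{normalrefslatticelemma}, the gcd/lcm formulas, and injectivity via Cohen's Lemma~4.11(iii) applied to $G^\ga$ with $\cR=\cup_j\hat R_{a_j}^G$, using closure under powers and conjugation) is exactly the paper's argument. For the primitivity claim you take a different route from the paper.

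The paper applies Clifford's theorem to an arbitrary normal subgroup $N$: $\Cd$ splits into $N$-invariant pieces that $G$ permutes, and primitivity leaves a single piece. Read carefully, this only shows that $\Cd|_N$ is homogeneous, $\Cd|_N\cong W^{\oplus e}$, which is not irreducibility. For example, the centre $\{\pm I\}\lhd G_4$ is a nontrivial normal subgroup acting by scalars. So the statement must be read for normal \emph{reflection} subgroups, as you do. Along the paper's lines one finishes by noting that a reflection $r\in N$ satisfies $1=\rank(I-r)=e\,\rank(I-r|_W)\ge e$, so $e=1$.

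Your route builds this in from the start. In the canonical decomposition $\Cd=V_0\oplus V_1\oplus\cdots\oplus V_r$ attached to $N=N_1\times\cdots\times N_r$, the $V_i$ with $i\ge1$ are pairwise non-isomorphic $N$-modules, since only $N_i$ acts nontrivially on $V_i$. So the obstacle you anticipate does not occur. The $V_i$ are exactly the nontrivial isotypic components, which $G$ permutes (your root-line argument gives the same conclusion), and $V_0$ is $G$-stable. The Clifford route is shorter and yields homogeneity for any normal subgroup. Yours costs the structure theorem for finite reflection groups, but it gives multiplicity one for free and has exactly the right scope.

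One caution, which applies equally to the paper: Cohen's lemma cannot hold in the form quoted for arbitrary generating sets. For instance, $G(6,1,2)$ is generated by $\pmat{0&1\cr1&0}$, $\diag(-1,1)$ and $\diag(\omega,1)$ with $\omega=e^{2\pi i/3}$. Yet its reflection $\diag(-\omega,1)$ has order $6$, so it is conjugate to no power of these generators. What your step (1) actually needs is the special case of a generating set that is closed under $G$-conjugation and whose elements on each root line form a subgroup. That special case is true. For example, $N_{\rm ab}$ is a product of cyclic groups, one for each $N$-orbit of root lines, onto which the full root subgroups of $N$ map isomorphically. Hence a generating set meeting a root line only in a proper subgroup could not generate $N$. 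This needs that extra input; closure under powers alone does not give it.
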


\begin{proof}
We have observed that the reflections in $N=G^{(\ga_1,\ldots,\ga_m)}$ of
(\ref{Complexsubgrouporderlabels}) are precisely $\cup_j\hat R_{a_j}^G$,
and so each normal reflection subgroup has a unique label.
Thus we have the lattice isomorphism claimed. 

If $N$ is a normal subgroup of $G\subset GL(\Cd)$, then it follows from Clifford's theorem 
that $\Cd$ decomposes into a direct sum of $N$-invariant subspaces which are 
conjugate to each other under the action of $G$. 
Thus if $G$ is primitive, then there
must be a single summand, i.e., $N$ must be irreducible.
\end{proof}

To describe the indexing (\ref{NormalhatRa}) of the lattice 
in Lemma \ref{normalrefslatticelemma} (and in Theorem \ref{Galatticethm}),
it suffices to know the reflection subgroups $R_{a_j}\cong H_{a_j}$ 
up to isomorphism. To capture this, and the number of conjugates 
$n_j$ of $R_{a_j}$ in $G$, we will say that the reflection group $G$ 
has {\bf reflection orbits} or {\bf reflection type}
\begin{equation}
\label{orbitlabelling}
n_1 H_{a_1}, \ldots , n_m H_{a_m},
\end{equation}
where $H_{a_j}$ can be written as an abstract group, or as a
subgroup of $U(\HH)$. The orbit size $n_j$ equals the size of the
orbit of the root line given by $a_j$ under the action of $G$.

The number of reflections of each possible order on each of the $m$ orbits 
is easily calculated from the reflection type, e.g., 
if $G$ is a complex reflection group, then its type is given by cyclic groups
\begin{equation}
\label{orbitlabellingcyclic}
n_1 C_{k_1}, \ldots , n_m C_{k_m},
\end{equation}
and
\begin{itemize}
\item The number of root lines (reflection hyperplanes) 
for $G$ is $n_1+\cdots+n_m$. 
This is equal to the sum of the codegrees of $G$ plus its rank.
\item The number of reflections of order $b\divides k_j$ on 
	the $n_j C_{k_j}$ orbit is $n_j\varphi(b)$.
\item The number of reflections of order $b$, and 
	the total number of reflections are
$$ \sum_{1\le j\le m\atop b | k_j} n_j\varphi(b), 
		\qquad \sum_{1\le j\le m} n_j(k_j-1).
$$
\end{itemize}
The reflections in $G^{(\ga_1,\ldots,\ga_m)}$ can be counted in the same way,
i.e.,
\begin{equation}
\label{Galphareflects}
\hbox{\# reflections in $G^{(\ga_1,\ldots,\ga_m)}$}
= \sum_{1\le j\le m} n_j(\ga_j-1).
\end{equation}
The reflection type gives more information than 
listing the number of reflections of each order (which is commonly done), 
e.g., the number of orbits.
For example, the primitive Shephard-Todd groups $G_{12}$ and 
$G_{13}$ have $12$ and $18$ reflections of order $2$, which 
is often written ${12}^{12}$ and ${12}^{18}$, and reflection types
$12C_2$ (one orbit) and $6C_2,12C_2$ (two orbits).

We give now give a simple illustrative example.

\begin{example}
\label{G212example}
Consider the imprimitive irreducible reflection group $G:=G(2,1,2)$,
which is generated by the reflections
$$  \pmat{0&1\cr1&0}, \qquad \pmat{1&0\cr0&-1}. $$
This real reflection group is conjugate to $G(4,4,2)$, 
	and is isomorphic to the dihedral group of order $8$ (see \cite{W26}).
It has four reflections of order $2$, which lie in two orbits
$$ \bigl\{\pmat{1&0\cr 0&-1},\pmat{-1&0\cr 0&1}\bigr\}, \qquad
	\bigl\{\pmat{0&1\cr 1&0},\pmat{0&-1\cr-1&0}\bigr\}. $$
and so $G$ has reflection type $2C_2,2C_2$, with the two reflection subgroup orbits being
\begin{equation}
\label{G212reflectionsgs}
\bigl\{\inpro{\pmat{1&0\cr 0&-1}},\inpro{\pmat{-1&0\cr 0&1}}\bigr\}, \qquad
\bigl\{\inpro{\pmat{0&1\cr 1&0}},\inpro{\pmat{0&-1\cr-1&0}}\bigr\}.
\end{equation}
Hence $G$ has four normal reflection subgroups, with the two proper ones being of order $4$
\begin{center}
\begin{tikzpicture}
    \matrix (A) [matrix of nodes, row sep=1.0cm, column sep = -1.2 cm]
    {
	    & $G^{(2,2)}=G$ & \\
	    $G^{(2,1)}=\inpro{\pmat{1&0\cr 0&-1},\pmat{-1&0\cr 0&1}}$ &&
	    $G^{(1,2)}=\inpro{\pmat{0&1\cr 1&0},\pmat{0&-1\cr-1&0}}$ \\
	    & $G=G^{(1,1)}=1$ & \\
    };
    \draw (A-1-2)--(A-2-1);
    \draw (A-1-2)--(A-2-3);
    \draw (A-2-1)--(A-3-2);
    \draw (A-2-3)--(A-3-2);
\end{tikzpicture}
\end{center}
We observe that $G^{(2,1)}\ne1$ is reducible, and so the condition that $G$ be 
	primitive in the second part of Theorem \ref{Galatticethm} is necessary.

There are five other subgroups of $G$, i.e., the normal subgroup of order $4$ given by
$$ \inpro{\pmat{-1&0\cr 0&-1},\pmat{0&-1\cr1&0}}, $$
which is not a reflection group, and the four reflection subgroups of
	(\ref{G212reflectionsgs}) of order two (which are not normal), for the roots $(0,1),(1,0),(1,-1),(1,1)$ of $G$, respectively.
\end{example}

\section{Collineations and maximal reflection groups}

We now consider how groups of matrices, such as
reflection groups, act on lines, i.e., one-dimensional subspaces. 
This leads to the notion of a ``collineation group'' \cite{B17}.

\subsection{Collineations}
\label{collineationsection}

Let $G\subset U(\Cd)$ be a group of complex unitary matrices 
(the quaternionic case will be considered later).
Then the action of $g\in G$ on a (complex) line $\CC a$ ($a\in\Cd$,  $a\ne 0$)
is the same as that of any nonzero scalar multiple $\gl g$ of $g$, i.e.,
\begin{equation}
\label{actiononClines}
\gl g(\CC a)= \CC \gl ga =\CC ga = g(\CC a). 
\end{equation}
The group $G$ factored by the (unit) scalar matrices in $G$ is 
called 
the {\bf collineation group} 
of $G$, and a matrix defined 
up to a scalar multiple is called 
a {\bf collineation} matrix.
We will denote the collineation group by $G_{\rm coll}$.

In view of (\ref{actiononClines}), there is a natural faithful action of a collineation group 
on the set of lines in $\Cd$
(see Lemma \ref{collinS4} for an example of a faithful action on a finite set of lines).
By Schur's lemma, we have
\begin{itemize}
\item If $G$ is irreducible, then its (normal) subgroup of scalar 
	matrices is its centre $Z(G)$, and so its collineation 
		group is $G_{\rm coll}=G/Z(G)$.
\end{itemize}
Collineation groups with the same collineation matrices 
(matrices up to a scalar multiple) are considered to be the
same. In the interests of giving a canonical form, there are
two approaches that have been taken:
\begin{itemize}
\item Consider subgroups of $U(\Cd)$ that contain all the unit scalar matrices
	(and factor by these).
\item Consider subgroups of the special unitary group $SU(\Cd)$ that contain
	the scalar matrices of determinant $1$, i.e., those given by the
		$d$-th roots of unity (and factor by these).
\end{itemize}
In light of the latter (every matrix can be scaled 
to have determinant $1$), the classification of finite collineation groups
follows from that of
the finite subgroups of $SU(\Cd)$.

For groups $G\subset U(\Hd)$ of quaternionic matrices, the
noncommutativity of $\HH$ implies that the analogue of 
(\ref{actiononClines}), i.e.,
\begin{equation}
\label{actiononHlines}
\gl g(\HH a) = g(\HH a),
\end{equation}
does not hold, unless $\gl$ is real, i.e., $\gl=\pm1$. Since the
centre of $M_d(\HH)$ is the real scalar matrices, we can 
define collineation groups and collineations as before, where the 
scalar matrices are taken to be real. 
We observe that under the correspondence with (complex) symplectic matrices given by
\begin{equation}
\label{C2dHdcorres}
\pmat{A&-B\cr\overline{B}&\overline{A}}\in M_{2d}(\CC)
\Iff A+Bj  \in M_d(\HH),
\end{equation}
the only scalar symplectic unitary matrices are precisely the real ones, i.e., $\pm I$.

\subsection{Hidden reflections and maximal reflection groups}

We will say that a complex collineation matrix is a {\bf reflection} if 
some scalar multiple of it is a reflection, and that a matrix is a 
{\bf hidden reflection} if some scalar multiple of it is a reflection,
but it is not a reflection.
In other words:
\begin{itemize}
\item
The collineation given by a matrix is a reflection if and only if
the matrix is reflection or a hidden reflection.
\end{itemize}
Reflections and hidden reflections are easily recognised by 
their spectral structure:
\begin{itemize}
\item A matrix $g\in U(\Cd)$ is a reflection or a hidden reflection if and only 
if it has exactly two eigenvalues $\gl_1$ and $\gl_2$ with multiplicities $1$ and $d-1$.
		It is a reflection when $\gl_2=1$ (or $\gl_1=\gl_2=1$, when $d=2$).
\item For $d=2$, there are two scalar multiples of
	a hidden reflection that give a reflection.
\item
For $d\ge3$, 
		there is a unique scalar multiple of a hidden reflection $g$ which is a 
reflection, i.e., $\gl_2^{-1} g$, which we call the {\bf reflection given by}
$g$. Furthermore, if $g$ has finite order, then $\gl_2^{-1}$ is a root
		of unity.
\end{itemize}
We can now make a key definition:

\begin{definition}
The {\bf maximal reflection group} for a complex reflection group $G$ is
the reflection group generated by its reflections and the reflections given
by its hidden reflections.
\end{definition}

We will also say that a reflection group is {\bf maximal} if it
equals its maximal reflection group, i.e., it contains no hidden
reflections. 
The identity subgroup is (technically) a maximal reflection group. We will not
fuss over this, e.g., presenting the lattice of maximal reflection subgroups without it.
A key observation is the following:
\begin{itemize}
\item If $g$ is a hidden reflection of a complex reflection group $G$, then
the reflection group generated by $G$ and the reflection $\gl_2^{-1} g$ 
is the group generated by the reflections in $G$ and the scalar matrix
		$\gl_2 I$, and so it has the same collineation group as $G$.
\end{itemize}
This implies that the number of reflections plus the number of hidden 
reflections of $G$ equals the number of reflections in its maximal reflection group.

\begin{example} The Shephard-Todd group $G_{12}\subset U(\CC^2)$ of order $48$
(with $12$ reflections) has 
maximal reflection group $G_{11}$ of order $576$, which has
$46$ reflections. Therefore, $G_{12}$ has $12$ reflections
and $34$ hidden reflections.
\end{example}

For reflection groups of rank $2$, every nonscalar matrix
is either a reflection or a hidden reflection, and so the
maximal reflection groups can be identified by counting
their reflections.

\begin{proposition}
\label{maximalrefcount}
An irreducible reflection group $G\subset U(\CC^2)$ is a maximal
reflection group if and only if it has
	$2(|G|/|Z(G)|-1)$ reflections.
\end{proposition}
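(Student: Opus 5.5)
We count reflections coset by coset in $G/Z(G)$. Since $G$ is irreducible, Schur's lemma gives that the scalar matrices in $G$ are exactly $Z(G)$. So $G$ is a disjoint union of $|G|/|Z(G)|$ cosets $gZ(G)$, one of which is $Z(G)$ itself. We will show that each non-scalar coset contains at most two reflections. We will also show that it contains exactly two if and only if none of its elements is a hidden reflection. Summing over the $|G|/|Z(G)|-1$ non-scalar cosets then gives the claimed criterion.

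First, the scalar coset $Z(G)$ contains no reflections, since for $d=2$ a reflection has the two distinct eigenvalues $\xi\ne1$ and $1$. Next fix a non-scalar $g\in G$. In rank $2$ the matrix $g$ has two distinct eigenvalues $\gl_1\ne\gl_2$ (if they were equal, $g$ would be scalar because it is unitary and hence diagonalisable). Each has multiplicity $1=d-1$. By the spectral characterisation stated earlier, $g$ is therefore a reflection or a hidden reflection. The scalar multiples of $g$ that are reflections are exactly $\gl_1^{-1}g$ and $\gl_2^{-1}g$, and these are distinct since $\gl_1\ne\gl_2$; this is the earlier remark that for $d=2$ there are two such multiples. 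The elements of the coset $gZ(G)$ are the matrices $zg$ with $zI\in Z(G)$. Hence the coset contains at most two reflections, and it contains exactly two precisely when both $\gl_1^{-1}I$ and $\gl_2^{-1}I$ lie in $Z(G)$.

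Now relate this to maximality. Suppose $G$ is maximal, that is, it contains no hidden reflections. Then every non-scalar element of $G$ is a reflection. Take a non-scalar coset $gZ(G)$; we may assume $g$ itself is a reflection, say $g=r_{a,\xi}$ with eigenvalues $\xi$ and $1$. The element $\xi^{-1}g$ is the other reflection multiple of $g$. It is enough to show $\xi^{-1}g\in G$, and this needs a separate argument. The determinant gives $\det g=\xi$, which does not immediately place $\xi^{-1}I$ in $G$, so I would argue through the group structure instead. The element $\xi^{-1}g$ is a product of $g$ with a scalar, but that scalar need not lie in $G$. So the cleaner route is to use the definition of the maximal reflection group, as below.

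Consider the reflection $g'=\xi^{-1}g$ and let $\tilde G=\inpro{G,g'}$. By the key observation earlier, $\tilde G$ is generated by $G$ together with the scalar $\xi^{-1}I$, and it has the same collineation group as $G$. Suppose $\xi^{-1}I\notin G$. Then $\tilde G$ contains an element $\mu h$ with $h\in G$ non-scalar and $\mu I\notin Z(G)$. The non-scalar elements of $G$ have non-scalar cosets. So to use the contrapositive cleanly, I would reformulate everything via the maximal reflection group $M$ of $G$.

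The reformulation is as follows. $M$ has the same collineation group as $G$, so $|M|/|Z(M)|=|G|/|Z(G)|$. Every non-scalar element of $M$ is a reflection or hidden reflection, and both of its reflection multiples lie in $M$ by maximality of $M$. Therefore $M$ has exactly $2(|G|/|Z(G)|-1)$ reflections. On the other hand, the reflections of $G$ plus the hidden reflections of $G$ number exactly the reflections of $M$, as observed before the proposition. Hence $G$ has $2(|G|/|Z(G)|-1)$ reflections if and only if it has no hidden reflections, that is, if and only if $G=M$.

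The main obstacle is the claim that every non-scalar coset of $M$ contains both reflection multiples. This is where I rely on the earlier observation that the count of reflections plus hidden reflections of $G$ equals the number of reflections in $M$. I would verify it by checking that adjoining all reflection multiples only adds scalars, so that $M=\inpro{G,\text{scalars } \gl_i^{-1}I}$ is closed under taking both reflection multiples of each of its elements.
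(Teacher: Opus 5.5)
Your second paragraph is correct. It is exactly the content of the paper's one-line proof: each of the $|G|/|Z(G)|-1$ non-scalar cosets contains at most two reflections, namely $\lambda_1^{-1}g$ and $\lambda_2^{-1}g$. It contains both precisely when $\lambda_1^{-1}I,\lambda_2^{-1}I\in Z(G)$.

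The gap is in how you then tie this to maximality. You read ``maximal'' as ``no element of $G$ is a hidden reflection''. Your closing step then uses the count ``reflections of $G$ plus hidden reflections of $G$ equals reflections of $M$''. In rank $2$ every non-scalar element is one or the other, so the left side is $|G|-|Z(G)|$. But $M$ has $2(|G|/|Z(G)|-1)$ reflections, and these agree only when $|Z(G)|=2$ (the $G_{12}$ case the paper checks).

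For example, $G_{11}$ equals its maximal reflection group and has $46=2(24-1)$ reflections. Yet it contains $552-46=506$ hidden reflections. So the chain ``right count $\Leftrightarrow$ no hidden reflections $\Leftrightarrow$ $G=M$'' fails in the middle.

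The same problem affects your earlier paragraphs. The claim in your first paragraph, that a coset contains exactly two reflections iff it contains no hidden reflection, holds only when $|Z(G)|=2$: such a coset still contains $|Z(G)|-2$ hidden reflections. Likewise, the premise of your third paragraph, that every non-scalar element of a maximal $G$ is a reflection, would force $|Z(G)|\le 2$. The paper's proof uses the same loose phrase. What it actually means is that every nontrivial collineation lifts to both of its reflections inside $G$.

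The repair is already in your second paragraph. Let $M$ be the group generated by all reflections that are scalar multiples of elements of $G$, as in your last paragraph. Then $G\subseteq M$, and $G=M$ says exactly that $\lambda_1^{-1}g,\lambda_2^{-1}g\in G$ for every non-scalar $g\in G$. That is, every non-scalar coset contains two reflections. Since no coset contains more than two, this is equivalent to $G$ having $2(|G|/|Z(G)|-1)$ reflections, which gives both directions at once.

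If you want to keep the route through $M$, compare sets rather than counting hidden elements. The reflections of $G$ are among those of $M$, and $M$ has exactly $2(|G|/|Z(G)|-1)$ of them. Closure of $M$ holds because $\mu g$ has the same reflection multiples $\lambda_i^{-1}g$ as $g$. So the counts agree iff every reflection of $M$ lies in $G$, i.e.\ iff $G=M$.
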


\begin{proof} Since a maximal reflection group has no hidden reflections, 
	every nonscalar matrix in $G$ must be a reflection, with
	each nontrivial collineation in $G/Z(G)$ giving exactly two reflections.
\end{proof}

\begin{example} 
\label{Maximalonedim}
All the reflection groups in $U(\CC^1)$ are maximal. 
They are given by the $m$-th roots of unity, i.e., the cyclic
	group of order $m$, and we will use the notation
	$$ C_m = G(m,1,1). $$
The reducible reflection groups in $U(\CC^2)$ 
must be a product of cyclic groups.
When $C_1$ is a factor, one obtains a group generated
by a single reflection of order $m$, which we also denote by $C_m$, as we have
	done for the reflection type.
	The maximal reflection group
	for $C_{a}\times C_{b}$ is 
	$$ C_m\times C_m, \qquad m:={\rm lcm}(a,b). $$
These are all the maximal reducible reflection groups in $U(\CC^2)$.
\end{example}

\begin{lemma}
\label{normalsgmaxreflectgp}
Every complex reflection group $G$ is a normal (collineation preserving) 
subgroup of its maximal reflection group.
\end{lemma}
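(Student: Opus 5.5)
Let $M$ denote the maximal reflection group of $G$. The plan is to write $M$ explicitly as $M=\inpro{G,Z}$ for a group $Z$ of scalar matrices, and then read off both claims from this description.

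First, for each hidden reflection $g\in G$ (with eigenvalues $\gl_1$ of multiplicity $1$ and $\gl_2$ of multiplicity $d-1$), the reflection given by $g$ is $\gl_2^{-1}g$. This lies in $\inpro{G,\gl_2 I}$, and conversely $\gl_2 I=g(\gl_2^{-1}g)^{-1}\in\inpro{G,\gl_2^{-1}g}$. This is the key observation stated before the definition.

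Second, let $Z$ be the group generated by all the scalars $\gl_2^{-1}I$ arising from the hidden reflections of $G$. By the first step,
$$ M=\inpro{G\cup\{\gl_2^{-1}g\}}=\inpro{G,Z}=GZ, $$
since scalar matrices commute with everything. The case $d=2$, where there are two choices of scalar multiple, is handled the same way: each choice differs from $g$ by a scalar. For $d=1$ there are no hidden reflections and $M=G$. In every case $M=GZ$ with $Z$ central in $GL(\Cd)$.

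Third, normality is immediate. For $gz\in M$ and $h\in G$,
$$ (gz)h(gz)^{-1}=ghg^{-1}\in G, $$
so $G\trianglelefteq M$. Moreover $G$ is a reflection subgroup of $M$, since all reflections of $G$ are reflections of $M$.

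Finally, we check that the inclusion preserves collineations. The collineation group of $M=GZ$ is $GZ$ modulo its scalars. Every element $gz$ induces the same action on lines as $g$, by (\ref{actiononClines}), so the induced map $G_{\rm coll}\to M_{\rm coll}$ is surjective. It is injective because the scalars of $G$ are exactly $G$ intersected with the scalars of $M$. Hence $G_{\rm coll}=M_{\rm coll}$.

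The only delicate point is to phrase "collineation preserving" as equality of the images in $PGL(\Cd)$, rather than relying on the centre (for reducible $G$ the scalars need not be all of $Z(G)$). Working with images in $PGL(\Cd)$ avoids any irreducibility assumption.
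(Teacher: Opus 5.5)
Your proposal is correct and is essentially the paper's argument. The paper also uses the observation that adjoining the reflection $\lambda_2^{-1}g$ is the same as adjoining the scalar $\lambda_2 I$, so every element of the maximal reflection group has the form $\lambda g$ with $g\in G$, and conjugating by $\lambda g$ acts as conjugating by $g$. Your check that $G_{\rm coll}\to M_{\rm coll}$ is a bijection, done without assuming irreducibility, simply spells out the collineation claim that the paper treats as already observed.
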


\begin{proof} 
We have already observed that a reflection group $G$ is
a subgroup of its maximal reflection group, which has the same collineation
group. Since the maximal reflection group of $G$ has elements of the form $\gl g$, 
where $\gl$ is a nonzero scalar and $g\in G$, the calculation
$$ (\gl g) G (\gl g)^{-1} = (\gl\gl^{-1}) g G g^{-1} =G, $$
shows that $G$ is normal in its maximal reflection group.
\end{proof}

\begin{theorem} (Classification) 
\label{complexrefgpclass}
The irreducible complex reflection groups are the collineation 
group preserving normal reflection subgroups of the maximal 
irreducible reflection groups.
\end{theorem}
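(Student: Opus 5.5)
The statement asserts two inclusions, and I will prove each separately. The first inclusion is the content of Lemma \ref{normalsgmaxreflectgp}. The second says that every collineation preserving normal reflection subgroup of a maximal irreducible reflection group is itself an irreducible complex reflection group. Together they give the set equality
$$ \{\hbox{irreducible complex reflection groups}\} = \{ N : N \hbox{ a collineation preserving normal reflection subgroup of some maximal irreducible } M\}. $$

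For the first inclusion, let $G$ be irreducible and let $M$ be its maximal reflection group. By the key observation preceding the definition, $M$ is generated by the reflections of $G$ together with certain scalar matrices $\gl_2 I$. Hence every element of $M$ has the form $\gl g$ with $g\in G$, and $M_{\rm coll}=G_{\rm coll}$. Lemma \ref{normalsgmaxreflectgp} then gives that $G$ is normal in $M$ and collineation preserving. It remains to check two properties of $M$:
\begin{itemize}
\item $M$ is irreducible. A subspace invariant under $M$ is invariant under $G\subset M$. Conversely, scalars preserve every subspace, so $M$ and $G$ have the same invariant subspaces.
\item $M$ is maximal. The step to check is that a hidden reflection of $M$ gives nothing new. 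If $\mu(\gl g)$ is a reflection, then $(\mu\gl) g$ is a reflection with $g\in G$. So either $g$ is a reflection of $G$ up to scalar, or $g$ is a hidden reflection of $G$. In both cases the resulting reflection already lies among the generators of $M$, or is obtained from them, as in the construction of the maximal reflection group.
\end{itemize}
The main obstacle in this step is the rank $2$ case, where a hidden reflection has two reflection multiples. Both multiples differ by scalars that are already adjoined, since $M$ contains $G$ and the relevant $\gl_2 I$. I would handle this by noting that $M$ is the group generated by all reflections in the finite group $\langle G, \hbox{scalars } \gl_2 I\rangle$, which is closed under this operation.

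For the converse inclusion, let $N$ be a normal reflection subgroup of a maximal irreducible $M$ with $N_{\rm coll}=M_{\rm coll}$. Then $N$ is a finite reflection group, so it only remains to show that $N$ is irreducible. Suppose $W$ is a nonzero proper $N$-invariant subspace. Every $m\in M$ has the form $\gl n$ with $n\in N$, because $N$ and $M$ have the same image in the collineation group of lines. Then $mW=nW=W$, so $W$ is $M$-invariant, contradicting the irreducibility of $M$.

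Finally, the irreducible complex reflection groups are all finite, and the argument above uses finiteness only to guarantee that maximal reflection groups exist. I expect the genuinely delicate point to be the verification that $M$ is maximal, that is, idempotence of the maximal-reflection-group construction. Everything else is formal.
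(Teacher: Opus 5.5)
Your outline matches the paper's proof: Lemma~\ref{normalsgmaxreflectgp}, plus the remark that $G$ and its maximal reflection group differ only by scalars and so have the same invariant subspaces. Your converse, writing each $m\in M$ as $\gl n$ with $n\in N$, is the same remark.

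The paper's one-line proof takes for granted that the maximal reflection group $M$ of $G$ is itself maximal. You rightly flag this, but your rank~$2$ argument for it fails. The bad case is not a hidden reflection of $G$. It is $h=\gl g$ with $g\in G$ a genuine reflection with eigenvalues $\xi,1$, and $\gl\notin\{1,\xi^{-1}\}$ a newly adjoined scalar. Then $h$ is a hidden reflection of $M$ whose reflection multiples are $g$ and $\xi^{-1}g$, and nothing forces $\xi^{-1}I\in M$.

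For example, take $G=G(3,3,2)$ (dihedral of order $6$, trivial centre) and $\go=e^{2\pi i/3}$. Its only hidden reflections are $\rho=\diag(\overline{\go},\go)$ and $\rho^2$. All their reflection multiples, e.g.\ $\diag(1,\go^2)$ and $\diag(\go,1)$, only adjoin $\go^{\pm1}I$. So even taking both multiples gives $M=\inpro{G,\go I}=G(3,1,2)$, of order $18$. But $\go\pmat{0&1\cr1&0}\in G(3,1,2)$ has eigenvalues $\pm\go$, and its reflection multiple $-\pmat{0&1\cr1&0}$ is not in $G(3,1,2)$. Hence $\inpro{G,\go I}$ is not closed under the operation, contrary to your claim. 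The actual maximal group is $G(6,2,2)=\inpro{G(3,1,2),-I}$, consistent with the paper's $G(m,1,2)\lhd G(2m,2,2)$. For $d\ge3$ your case analysis is correct, because the reflection multiple is unique.

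The repair is to use $M^*:=\inpro{r : r \hbox{ a reflection with } [r]\in G_{\rm coll}}$, i.e.\ all reflections that are scalar multiples of elements of $G$.
\begin{enumerate}[\rm (i)]
\item There are finitely many such $r$. Each is $\gl g$ with $g\in G$ of order $n$.
\item Since $\gl^nI=(\gl g)^n$ is a power of a reflection, it fixes a hyperplane pointwise, so $\gl^n=1$ (for $d\ge2$).
\item Hence $M^*\subset\Lambda G$ for a finite group $\Lambda$ of scalars. So $M^*$ is finite, $G\lhd M^*$, $M^*_{\rm coll}=G_{\rm coll}$, and $M^*$ is irreducible by your argument.
\item Maximality is then a tautology. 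The maximal reflection group of $M^*$ is generated by the reflections whose collineations lie in $M^*_{\rm coll}=G_{\rm coll}$, which is $M^*$ itself.
\end{enumerate}
With $M^*$ in place of the one-step group, the rest of your proof goes through unchanged.
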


\begin{proof}
In view of Lemma \ref{normalsgmaxreflectgp}, 
it suffices to observe that a complex reflection group is irreducible
	if and only its maximal reflection group is, since these 
	groups have the same matrices up to a scalar.
\end{proof}

To put Theorems \ref{Galatticethm} and \ref{complexrefgpclass} into full effect, we will
go
through the Shephard-Todd classification and 
\begin{itemize}
\item Determine the (primitive and imprimitive) maximal 
	irreducible complex reflection groups.
\item
	Find all the collineation preserving normal reflection
	subgroups of the maximal reflection groups (these have the same primitivity).
\item Explore how the indexing of (\ref{Complexsubgrouporderlabels}) 
relates to the structure of these normal reflection subgroups and their quotients.
\end{itemize}
Before doing this, we consider the normal reflection subgroups of
$G_{11}$ in great detail, including
showing how Theorem \ref{Galatticethm} allows for the 
systematic calculation of all normal reflection subgroups, 
and natural generators for them.

\begin{remark}
\label{primitivequatcentres}
The collineation group seems to play less of a role for the quaternionic 
reflection groups, e.g.,
all the primitive quaternionic reflection groups have centre $\inpro{-1}$
(see Table \ref{PrimitiveQuatOrbits}), and hence they have no hidden reflections.
\end{remark}

\section{The Shephard-Todd group $G_{11}$ and the SIC}

Here we consider the maximal reflection group $G_{11}\subset U(\CC^2)$, 
its subgroups, and their generators and action on a set of four equiangular lines.

\subsection{Generators for $G_{11}$ and equiangular lines}

A set of four equiangular lines in $\CC^2$ (known as a SIC) is given by an orbit of
a (fiducial) vector/line $v\in\CC^2$, e.g., 
\begin{equation}
\label{fiducialv2dim}
v=\pmat{1-\sqrt{3}\cr 1-i}\in\CC^2,
\end{equation}
under the action of
the Heisenberg group, which is generated by 
$$  S:=\pmat{0&1\cr1&0}\ \hbox{(cyclic shift)}, \qquad \gO:=\pmat{1&0\cr0&-1} \
\hbox{(modulation)}. $$
This is the reflection group $G(2,1,2)$ of Example \ref{G212example}.
Since $S\gO=-\gO S$, its collineation group is the Klein four-group $C_2\times C_2$. The normaliser of the Heisenberg group (as a 
collineation group) is the Clifford group,
which is generated by $S$, $\gO$ and the matrices
$$ F:={1\over\sqrt{2}}\pmat{1&1\cr1&-1} \ \hbox{(discrete Fourier transform)}, \qquad R:=\pmat{1&0\cr0&i^3}. $$
These notions generalise to a SIC of $d^2$ equiangular lines in $\Cd$
(see \cite{W18}).

The matrices $S$, $\gO$, $F$, $R$ are reflections.
The reflection group that they generate has a hidden reflection $RF$ of order $3$,
and so is not maximal (it is $G_9$).
We define the Zauner matrix, which is a reflection of 
order $3$, which fixes a fiducial vector, by
$$ Z:= e^{{2\pi i\over 24}} RF= {1\over4} \pmat{
        1+\sqrt{3}+(\sqrt{3}-1)i & 1+\sqrt{3}+(\sqrt{3}-1)i \cr \sqrt{3}-1-(\sqrt{3}+1)i
        & 1-\sqrt{3}+(\sqrt{3}+1)i}. $$
The reflection group generated by $F,R,Z$ (which contains $S,\gO$) 
is the maximal reflection group $G_{11}$.
Indeed, it has the same presentation as that given in \cite{LT09}, 
which has generators
$$ r_1=Z^2=Z^{-1}, \qquad r_3= R^2 F R^{-2}, \qquad r_4=R^3=R^{-1}. $$
A calculation shows that the reflection type (reflection orbits) of $G_{11}=\inpro{F,R,Z}$ is
\begin{equation}
\label{G11reflectiontype}
12\inpro{F},6\inpro{R},8\inpro{Z}= 12C_2,\,6C_4,8C_3,
\end{equation}
i.e., $G_{11}=G_{11}^{(2,4,3)}$, 
and $G_{11}$ has $12$ normal reflection subgroups 
(by Theorem \ref{Galatticethm}).
The reflection count of Proposition \ref{maximalrefcount} shows
that $G=G_{11}$ is a maximal reflection group, i.e.,
$$ \hbox{\# reflections in $G$} = 12(2-1)+6(4-1)+8(3-1) = 46 
= 2\Bigl({576\over24}-1\Bigr)=2\Bigl({|G|\over|Z(G)|}-1\Bigr). $$
Here the centre $Z(G)$ has order $24$, and so the collineation group has order ${|G|\over|Z(G)|}=24$.

In our computations, we will at times require 
more than one element of maximal order from a reflection orbit.
These can be easily obtained by conjugation of
a given representative with
reflections from different orbits, e.g.,
$$ F^G=\{gFg^{-1}: g \in G_{11}\}=\{gFg^{-1}: g \in \inpro{R,Z}\}. $$

We now decribe the collineation group.
Let $v$ be the fiducial vector of (\ref{fiducialv2dim}),
which is a root of the reflection $Z$.
The orbit of $v$ under the Heisenberg group gives 
the four equiangular lines $v_1=v$, $v_2=Sv$, $v_3=\gO v$, $v_4=S\gO v$,
and the orthogonal complement of these vectors/lines in $\CC^2$ 
gives a second such SIC. Thus we obtain eight lines given by the vectors
$$ v_1,v_2,v_3,v_4 = \pmat{1-\sqrt{3}\cr 1-i},
\pmat{1-i\cr1-\sqrt{3}}, \pmat{1-\sqrt{3}\cr i-1},\pmat{1-i\cr1-\sqrt{3}}, $$
$$ v_1^\sperp,v_2^\sperp,v_3^\sperp,v_4^\sperp = \pmat{1+i\cr\sqrt{3}-1}, 
\pmat{\sqrt{3}-1\cr1+i}, \pmat{1+i\cr1-\sqrt{3}}, \pmat{1-\sqrt{3}\cr1+i}. $$
The permutation action of the generators $F$, $R$, $Z$ for
$G_{11}$ (the Clifford group) on these lines
is (with the obvious notation)
$$ F:\ (1\,4^\sperp)(2\,2^\sperp)(3\,3^\sperp)(4\,1^\sperp), \qquad
 R:\ (1\,2^\sperp\,3\,4^\sperp)(2\,3^\sperp\,4\,1^\sperp), \qquad
 Z:\ (2\,3\,4)(2^\sperp\,3^\sperp\,4^\sperp). $$
Thus, $G_{11}$ (the Clifford group) acts on the four pairs of orthogonal lines $\{v_j,v_j^\perp\}$ via
\begin{equation}
\label{FRZperms}
F:\ (1\, 4), \qquad
 R:\ (1\,2\,3\,4), \qquad
 Z:\ (2\,3\,4).
\end{equation}

\begin{lemma}
\label{collinS4}
There is a surjective homomorphism $\Psi:G_{11}\to S_4$ given by
\begin{equation}
\label{FRZtoS4}
 F\mapsto (1\, 4), \qquad
 R\mapsto (1\,2\,3\,4), \qquad
 Z\mapsto (2\,3\,4),
\end{equation}
whose quotient is
an isomorphism from the collineation group to $S_4$.
In particular, the collineation group of a subgroup $H\subset G_{11}$ is 
isomorphic to
	$\Psi(H)\subset S_4$.
\end{lemma}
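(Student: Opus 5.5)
The homomorphism $\Psi$ will not be defined on generators. Instead I will take it to be the permutation action of $G_{11}$ on the four unordered pairs of orthogonal lines $P_j=\{\CC v_j,\CC v_j^\sperp\}$, $j=1,\dots,4$. Any element of the unitary group $U(\CC^2)$ preserves orthogonality, so it maps orthogonal pairs of lines to orthogonal pairs. The permutations computed above show that $F$, $R$ and $Z$ permute the eight lines $\{\CC v_j,\CC v_j^\sperp\}$, so each of them maps every pair $P_j$ to another pair. Since $G_{11}=\inpro{F,R,Z}$, the whole group $G_{11}$ permutes the set $\{P_1,\dots,P_4\}$. This gives a homomorphism $\Psi:G_{11}\to S_4$, and by (\ref{FRZperms}) its values on the generators are those in (\ref{FRZtoS4}). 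Defining $\Psi$ this way means no relations of $G_{11}$ have to be checked.

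For surjectivity, the image of $\Psi$ contains the $4$-cycle $(1\,2\,3\,4)$ and the transposition $(1\,4)$. These two are adjacent in the cyclic order, and together they generate $S_4$. (The image also contains $(2\,3\,4)$, but it is not needed.)

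Next I identify the kernel. Scalar matrices fix every line, so $Z(G_{11})\subset\ker\Psi$, and $\Psi$ factors through $G_{11}/Z(G_{11})$. Since $G_{11}$ is irreducible, the earlier discussion gives $G_{11}/Z(G_{11})=(G_{11})_{\rm coll}$, which has order $576/24=24$. The induced map $(G_{11})_{\rm coll}\to S_4$ is therefore a surjection between two groups of order $24$, hence an isomorphism. Equivalently, $\ker\Psi=Z(G_{11})$. The order count is the main step, and it relies on $|G_{11}|=576$ and $|Z(G_{11})|=24$, both taken from the earlier calculation.

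Finally, let $H\subset G_{11}$ be a subgroup. The scalar matrices in $H$ are exactly $H\cap Z(G_{11})=H\cap\ker\Psi$, because every scalar matrix in $G_{11}$ lies in the centre. So
$$H_{\rm coll}=H/(H\cap\ker\Psi)\cong\Psi(H).$$
Here $H_{\rm coll}$ is defined as $H$ modulo its scalar matrices, so this holds even when $H$ is reducible and its centre is larger than its group of scalars.
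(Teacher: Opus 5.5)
Your proof is correct and follows the same route as the paper. In both, $\Psi$ is the permutation action of $G_{11}$ on the four pairs of orthogonal lines, surjectivity comes from the images of $F,R,Z$ generating $S_4$, and the scalar matrices lie in the kernel. You also make explicit two steps the paper's proof leaves implicit: the order count $|G_{11}/Z(G_{11})|=576/24=24$ needed to conclude $\ker\Psi=Z(G_{11})$, and the first-isomorphism-theorem argument $H_{\rm coll}=H/(H\cap\ker\Psi)\cong\Psi(H)$ for the ``in particular'' claim.
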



\begin{proof} Since the permutations 
        of (\ref{FRZperms})
generate the symmetric group $S_4$, which is of order $24$,
and a scalar matrix acts on a pair of orthogonal lines as the identity,
the map of (\ref{FRZtoS4}) gives a
surjective homomorphism $G_{11}\to S_4$, with kernel containing
	the scalar matrices in $G_{11}$. Since $Z(G_{11})$ consists of 
scalar matrices, we obtain
	the isomorphism $G_{11}/Z(G_{11})=G_{11}/\ker\Psi \cong S_4$.
\end{proof}

Lemma \ref{collinS4} gives the following geometric description of
the $46$ reflections in $G_{11}$. These correspond to the $23$ nontrivial
collineations in $S_4$.

\begin{itemize}
\item The $12$ reflections of order $2$ conjugate to $F$ act like
	$(1\,2)$, i.e., they fix two of the
	pairs of orthogonal lines and permute the other two (six collineations in total).
\item The $12$ reflections of order $4$ conjugate to $R$ or $R^{3}$ 
	act like $(1\,2\,3\,4)$, i.e., they cycle
	through the four pairs of orthogonal lines.
\item The $6$ reflections of order $2$ conjugate to $R^2$ act like
	$(1\,2)(3\,4)$, i.e., the permute two pairs of pairs of orthogonal lines.
\item The $16$ reflections of order $3$ conjugate to $Z$ or $Z^2$ act like
	$(1\,2\,3)$, i.e., they fix one pair of orthogonal lines and cycle 
		through the remaining three.
\end{itemize}


\subsection{Generators for the normal reflection subgroups}

We now use $G_{11}^{(2,4,3)}$ to illustrate
a general method for finding natural generating sets for the normal 
reflection subgroups 
of a reflection group $G=G^{(k_1,\ldots,k_m)}$, with reflection orbits
$R_{a_1}^G,\ldots,R_{a_m}^G$, and reflection type
$$ n_1 C_{k_1},\ldots, n_m C_{k_m}. $$
By (\ref{splitorbitdef}), (\ref{hatRjformula}), the $j$-th orbit of a
normal reflection subgroup $H=G^{(\ga_1,\ldots,\ga_m)}$
is split if
\begin{equation}
\label{ComplexSplitOrbit}
((R_{a_j})^{k_j/\ga_j})^H \ne ((R_{a_j})^{k_j/\ga_j})^G.
\end{equation}
Our earlier observation
\begin{itemize}
\item If $G$ is a complex reflection group generated by the reflections $\cR$,
then any reflection in $G$ is conjugate to a power of a reflection in $\cR$
                (Lemma 4.11 (iii) \cite{C76}).
\end{itemize}
implies that a generating set $\cR=\cR^{(k_1,\ldots,k_m)}$ of reflections for $G=G^{(k_1,\ldots,k_m)}$ 
must contain at least one reflection (of highest order) from each of the
$m$ reflection orbits (see Lemma \ref{abelianisationlemma}), 
i.e., $\cR$ can be partitioned
\begin{equation}
\label{Rgenpartition}
\cR=\cR_{a_1}\cup\cdots\cup \cR_{a_m}, \qquad 
\cR_{a_j}=\cR_{a_j}^{(k_1,\ldots,k_m)}:=\cR\cap R_{a_j}^G,
\end{equation}
where $|\cR_{a_j}|\ge 1$. Our choice of generators $F,R,Z$ consisting of one
reflection of maximal order from each of the three orbits is therefore a minimal
generating set of reflections. In this particular case, any such choice gives 
a minimal generating set (see Theorem \ref{nicelygenthm}).

It is also quite possible that more than one reflection may need to be taken from a
reflection orbit to obtain a generating set, e.g., $G=G_8$ has a single reflection 
orbit $6C_4$, and so at least two reflections must be taken from it to obtain
a generating set (it can in fact be generated by two reflections of order $4$, but not all such pairs).

Given a generating set $\cR$ of reflections for $G=G^{(k_1,\ldots,k_m)}$ of the form 
(\ref{Rgenpartition}), the reflections
\begin{equation}
\label{Rgenpartderivedgens}
	\cD_{\cR,a_j}^{(\ga_1,\ldots,\ga_m)}:=\{g^{k_j/\ga_j} : g\in \cR_{a_j}^{(k_1,\ldots,k_m)}\}, \qquad \ga_j\ne1, 
\end{equation}
are reflections of maximal order in $R_{a_j}^G\cap G^{(\ga_1,\ldots,\ga_m)}$,
and we call them the reflections {\bf inherited} from $\cR$.
The inherited reflections show the inclusion $G^{(\ga_1,\ldots,\ga_m)}\subset G^{(k_1,\ldots,k_m)}$
and come for free.
We will investigate to what extent they are useful for constructing 
a nice generating set of reflections for $G^{(\ga_1,\ldots,\ga_m)}$.

We now calculate the reflection subgroups of $G_{11}$ and minimal
sets of reflections which generate them. We then summarise the
general algorithm for such calculations.

\begin{figure}[h]
\begin{center}
\begin{tikzpicture}
    \matrix (A) [matrix of nodes, row sep=1.0cm, column sep = -1.2 cm]
    {
	    &&& $G_{11}^{(2,4,3)}=G_{11}$ &&& \\
	& $G_{11}^{(2,2,3)}=G_{15}$ && $G_{11}^{(2,4,1)}=G_{9}$ && $G_{11}^{(1,4,3)}=G_{10}$ & \\
	$G_{11}^{(2,1,3)}=G_{14}$ && $G_{11}^{(2,2,1)}=G_{13}$ && $G_{11}^{(1,4,1)}=G_{8}$ && \greybox{$G_{11}^{(1,2,3)}=G_7$} \\
& $G_{11}^{(2,1,1)}=G_{12}$ && \greybox{ $G_{11}^{(1,2,1)}=G(4,2,2)^*$ } && \greybox{ $G_{11}^{(1,1,3)}=G_5$ } & \\
	&& \greybox{$G_{11}^{(1,1,1)} =1^*$} &&&& \\
    };
    \draw (A-1-4)--(A-2-2); \draw (A-1-4)--(A-2-4);
    \draw (A-1-4)--(A-2-6); \draw (A-2-2)--(A-3-1);
    \draw (A-2-2)--(A-3-3); \draw (A-2-4)--(A-3-3);
    \draw (A-2-4)--(A-3-5); \draw (A-2-6)--(A-3-5);
    \draw (A-2-6)--(A-3-7); \draw (A-2-2)--(A-3-7);
    \draw (A-3-1)--(A-4-2); \draw (A-3-3)--(A-4-2); \draw (A-3-3)--(A-4-4); \draw (A-3-5)--(A-4-4); \draw (A-4-2)--(A-5-3);
    \draw (A-4-4)--(A-5-3);
    \draw (A-3-7)--(A-4-4);
    \draw (A-3-1)--(A-4-6);
    \draw (A-3-7)--(A-4-6);
    \draw (A-4-6)--(A-5-3);
\end{tikzpicture}
\end{center}
\vskip-0.5truecm
\caption{\label{G11-lattice}
The lattice of the $12$ normal reflection subgroups of $G_{11}$. 
Those in grey have a smaller collineation group, 
and the meets with a ${}^*$ are not the intersection of the groups. 
Note that all the nontrivial subgroups above are irreducible
(as per Theorem \ref{Galatticethm}),
with one being imprimitive, i.e., $G(4,2,2)$.
The groups not in grey form the sublattice of the collineation 
	preserving subgroups of $G_{11}$, with those in grey giving 
	its lower boundary.
}
\end{figure}

\begin{example} (Normal reflection subgroups of $G_{11}$).
We have already 
	observed that there are twelve normal reflection 
subgroups of $G_{11}=G_{11}^{(2,4,3)}=\inpro{F,R,Z}$
(see the indices in Figure \ref{G11-lattice}).
	Those which are maximal subgroups of $G_{11}$ (in the lattice)
are generated by the inherited
	reflections, and have no split orbits. They (and their reflection types) are
\begin{align*}
G_{15} = G_{11}^{(2,2,3)} =\inpro{F,R^2,Z}, & \qquad 
12\inpro{F},6\inpro{R^2},8\inpro{Z}=12 C_2,6C_2,8C_3 \cr
G_{9} = G_{11}^{(2,4,1)} =\inpro{F,R,Z^3} =\inpro{F,R}, & 
\qquad 12\inpro{F},6\inpro{R},8\inpro{Z^3} = 12C_2,6C_4, 8C_1, \cr
G_{10} = G_{11}^{(1,4,3)} =\inpro{F^2,R,Z} =\inpro{R,Z}, & 
\qquad
12\inpro{F^2},6\inpro{R},8\inpro{Z}= 12C_1,6C_4,8C_3.
\end{align*}
Here we have given the reflection type in a transparent way. 
When the identity subgroup of $R_{a_j}$ 
	is selected, i.e., $\ga_j=1$, 
the corresponding orbit disappears, i.e., no reflections are taken from it, 
and henceforth we won't record it in the reflection type, e.g., we will write
$$ G_{9} = G_{11}^{(2,4,1)} =\inpro{F,R}, 
        \qquad 12\inpro{F},6\inpro{R} = 12C_2,6C_4. $$
The next level of normal reflection subgroups in the lattice are
\begin{align*}
G_{14} = G_{11}^{(2,1,3)} = \inpro{F,Z}, & 
\qquad 12\inpro{F},8\inpro{Z}=12 C_2,8C_3, \cr
G_{13} = G_{11}^{(2,2,1)}=\inpro{F,F^Z,R^2 }, & 
\qquad 12\inpro{F},6\inpro{R^2} = 12C_2,6C_2, \cr
G_8 = G_{11}^{(1,4,1)} =\inpro{R,R^F}, & 
\qquad 6\inpro{R} = 6 C_4, \cr
G_7 = G_{11}^{(1,2,3)} = \inpro{R^2,Z,Z^R}, & 
\qquad 6\inpro{R^2},4\inpro{Z},4\inpro{Z^R}=6C_2,4C_3,4C_3.
\end{align*}
In addition to the inherited reflections, the last three require 
a second reflection from one orbit, i.e.,
$$ F^Z=ZFZ^{-1}, \qquad R^F=FRF^{-1}, \qquad Z^R=RZR^{-1}. $$
The first three have an orbit disappear, and the last $G_7$ has a 
split orbit, i.e., its reflection type has an extra orbit obtained
	by ``splitting'' $8\inpro{Z}=8C_3$ into two $G_7$-orbits
	$4C_3,4C_3$. 
	The latter fact means that we can apply Theorem \ref{Galatticethm}
to $G_7$ to find normal reflection subgroups of it which are not 
	normal in $G_{11}$, namely $G_4$ and $G_6$ (see Figure \ref{G7-lattice})

It is now a convenient time to consider the collineation group, whose
size decreases as we move down the lattice.
By Lemma \ref{collinS4}, $\Psi(G_{11})=S_4$, and
$$ \Psi(G_{14})=\inpro{\Psi(F),\Psi(Z)}
	= \inpro{(1\,4),(2\,3\,4)} = S_4, \quad
 \Psi(G_{13})=\inpro{(1\,4),(1\,3),(1\,3)(2\,4)}=S_4, $$
	$$ \Psi(G_8)=\inpro{(1\,2\,3\,4),(1\,4\,2\,3)}=S_4, \quad
	\Psi(G_7)=\inpro{(1\,3)(2\,4),(2\,3\,4),(1\,2\,3)} = A_4. $$
Thus $G_{11}$ is the maximal reflection group for all the normal reflection subgroups found so far, 
i.e., they are collineation preserving subgroups, except for $G_{7}$.
It follows that $G_{7}$ is a maximal reflection group by 
Proposition \ref{maximalrefcount}, i.e., it has
	$2(|A_4|-1)=22$ reflections.

The next level of the lattice gives the remaining
nontrivial normal reflection subgroups
\begin{align*} G_{12} = G_{11}^{(2,1,1)}=\inpro{F,F^Z,F^{Z^2}}, & 
\qquad 12\inpro{F} =12C_2, \cr
G_5 =  G_{11}^{(1,1,3)} =\inpro{Z,Z^R}, & 
\qquad
4\inpro{Z},4\inpro{Z^R} =4C_3,4C_3, \cr
	G(4,2,2) =  G_{11}^{(1,2,1)}=\inpro{R^2,(R^2)^{Z},(R^2)^{Z^2}}, & \qquad
2\inpro{R^2},2\inpro{(R^2)^{Z}},2\inpro{(R^2)^{Z^2}}
=2C_2,2C_2,2C_2,
\end{align*}
with collineation groups
$$ \Psi(G_{12}) = \inpro{(1\,4), (1\,3), (1\, 2)}=S_4, \qquad
\Psi(G_5)=\inpro{(2\,3\,4),(1\,2\,3)}=A_4, $$
$$ \Psi(G(4,2,2))= \inpro{(1\,3)(2\,4),(1\,2)(3\,4),(1\,4)(2\,3)}
= C_2\times C_2. $$
Here
the subgroups 
which do not preserve the collineation group
have split orbits, i.e., $G_5$ has $8\inpro{Z}$ split into two orbits,
and $G(4,2,2)$ has $6\inpro{R^2}$ split into three orbits.
\end{example}

The above example suggests that for a nontrivial normal reflection subgroup 
of a complex reflection group, the following are equivalent:
\begin{itemize}
\item There is a split orbit.
\item The collineation group is not preserved.
\item The number of reflections plus the number of hidden reflections is not preserved.  
\end{itemize}
This is indeed true in general, and follows from the action of 
the collineation group on the root lines (reflection subgroups).
Moreover, the first occurrence of a split orbit gives a maximal reflection group.


\begin{lemma} Let $G$ be an irreducible reflection group.
Then there is a natural 
action of the collineation group of $G$ on the
reflection subgroups for the roots given by
$$ g \cdot R_a = g R_a g^{-1}= R_{ga}, \qquad \forall g\in G. $$
By construction, this action is transitive on each of the reflection orbits $R_a^G$.
\end{lemma}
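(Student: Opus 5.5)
The plan is to check three things in turn: that conjugation defines an action of $G$ on the set of reflection subgroups $\{R_a\}$, that this action factors through $G_{\rm coll}=G/Z(G)$, and that it is transitive on each orbit $R_a^G$.

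First, the action of $G$ itself. By (\ref{raxi-conjugation}), for $g\in G$ we have $gR_ag^{-1}=R_{ga}$. Indeed, if $r_{a,\xi}\in G$ then $g r_{a,\xi} g^{-1} = r_{ga,\xi}\in G$. Applying the same argument with $g^{-1}$ gives the reverse inclusion, so $H_{ga}=H_a$ and $gR_ag^{-1}$ is exactly the full reflection subgroup $R_{ga}$. Since conjugation is an action, $(gh)R_a(gh)^{-1}=g(hR_ah^{-1})g^{-1}$ and $1\cdot R_a=R_a$. Note also that $R_a$ depends only on the root line, since $r_{a\mu,\xi}=r_{a,\xi}$ for a nonzero complex scalar $\mu$. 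In the quaternionic case one instead uses $r_{a\mu,\xi}=r_{a,\mu\xi\mu^{-1}}$, so $R_{a\mu}=R_a$ as sets. Hence the labelling $R_{ga}$ is well defined on root lines.

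Second, the descent to the collineation group. Since $G$ is irreducible, Schur's lemma (as noted in \S\ref{collineationsection}) shows that $Z(G)$ consists of scalar matrices; in the quaternionic case these are the real scalars $\pm I$. A scalar $\gl I$ commutes with every element, so $(\gl g)R_a(\gl g)^{-1}=gR_ag^{-1}$. Equivalently, $\gl g a=g a\gl$ spans the same line as $ga$, so $R_{\gl ga}=R_{ga}$. Therefore $Z(G)$ lies in the kernel of the action, and the action factors through $G/Z(G)=G_{\rm coll}$. The only point needing care here is the quaternionic setting, where a scalar must be central for $R_{a\gl}=R_a$ to be immediate. This is handled either by the reality of the central scalars or by the line-invariance noted above.

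Third, transitivity. This is immediate from the definition $R_a^G=\{gR_ag^{-1}:g\in G\}$: every element of the orbit is $g\cdot R_a$ for some $g$, and hence is the image of $R_a$ under the collineation $gZ(G)$. None of these steps is a real obstacle; the only subtle point is the well-definedness on root lines in the quaternionic case.
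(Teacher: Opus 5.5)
Your proposal is correct and is essentially the paper's argument: the paper likewise checks that scalar multiples act trivially, computing directly from (\ref{raxidefn}) that $r_{a,\xi}^{\lambda g}=r_{\lambda g a,\xi}=r_{ga,\xi}$ for a unit scalar $\lambda$ ($\lambda=\pm1$ in the quaternionic case), whereas you get this from the fact that scalar matrices commute with everything; in both, transitivity holds by construction. Your extra checks (that $gR_ag^{-1}$ is all of $R_{ga}$, and that $R_a$ depends only on the root line) are sound, but your assertion that $Z(G)=\{\pm I\}$ for irreducible quaternionic $G$ is unnecessary (and can fail, e.g.\ in rank one), since all you need is that the scalars being factored out are central in $G$.
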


\begin{proof} Suppose without loss of generality that $G$ is unitary, and
$\gl$ is a unit scalar in the collineation group ($\gl=\pm1$ when $G$ is quaternionic). 
Then from (\ref{raxidefn}), we calculate
$$ r_{a,\xi}^{\gl g}
= \gl g\Bigl(I-{a(1-\xi)a^*\over\inpro{a,a}}\Bigr)(\gl g)^{-1}
= I- { \gl g a(1-\xi)a^* (\gl g)^*\over\inpro{a,a}}
= r_{\gl g a,\xi}
= r_{g a,\xi}, \quad g\in G, $$
so that $\gl g\cdot R_a=g\cdot R_a=R_{ga}$, and the collineation group of
$G$ acts on the reflection subgroups (and the root lines). 
\end{proof}

\begin{theorem}
\label{splitorbitcollineation}
Let $G$ be a complex reflection group, and 
$N=G^{(\hat R_{a_1},\ldots,\hat R_{a_m})}$ 
be a normal reflection subgroup of $G$.  Then 
\begin{enumerate}[\rm (i)]
\item The collineation groups of $N$ and $G$ are different if and only if 
some $\hat R_{a_j}^G$ is a split orbit, i.e., $\hat R_{a_j}^N \ne \hat R_{a_j}^G$.
\item
If $N$ is in the lower boundary of the sublattice of the collineation preserving normal 
	reflection subgroups,
i.e., there is a normal reflection subgroup immediately above it with the same
collineation group as $G$, then $N$ is a maximal reflection group.
\end{enumerate}
\end{theorem}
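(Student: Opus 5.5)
The plan is to reduce both parts to statements about the scalar subgroup $S=Z(G)$, assuming (as in Lemma \ref{normalsgmaxreflectgp}) that $G$ is irreducible, so that $S$ consists of scalar matrices. Since collineation groups are compared as sets of matrices up to scalars, $N$ and $G$ have the same collineation group if and only if $G=NS$. So (i) becomes: $G=NS$ if and only if no $\hat R_{a_j}^G$ splits.

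\emph{The direction $G=NS\Rightarrow$ no split orbit.} Write $g=\gl n$ with $\gl\in S$ and $n\in N$. The computation in the preceding lemma gives $g\hat R_{a_j}g^{-1}=\hat R_{\gl n a_j}=\hat R_{na_j}=n\hat R_{a_j}n^{-1}$. Hence $\hat R_{a_j}^G=\hat R_{a_j}^N$ for every $j$.

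\emph{The converse.} Let $X$ be the set of root lines of $N$, i.e.\ the root lines of the nontrivial $\hat R_{a_j}$. I would show two things.
\begin{enumerate}[(a)]
\item The kernel of the action of $G$ on $X$ is $S$. If $g$ fixes every line of $X$, then $g r_{b,\xi}g^{-1}=r_{gb,\xi}=r_{b,\xi}$, so $g$ commutes with every reflection of $N$. If $N$ is irreducible, Schur's lemma makes $g$ a scalar. This holds for primitive $G$ by Theorem \ref{Galatticethm}. In the imprimitive case one must use irreducibility of $NS$ instead, which I expect can be arranged.
\item The image of $N$ in $\mathrm{Sym}(X)$ equals the image of $G$.
\end{enumerate}
Transitivity of $N$ on each orbit does not by itself give (b), and this is the step I expect to be the main obstacle.

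My first attempt at (b) is to reduce to generators. It suffices to show that each reflection $r=r_{a,\xi}$ of $G$ lies in $NS$. The root line $a$ lies in some orbit $R_{a_j}^G$. If $\hat R_{a_j}\ne1$, then $r$ normalises $\hat R_{a}$. The unsplit orbits let us pick, for each line $b\in X$, an element $n_b\in N$ with $n_bb=rb$. The aim is then to use the cyclic structure of $R_a\cong C_{k_j}$ and the coset $r\hat R_a$ to build a single $n$ with $r^{-1}n$ fixing all of $X$. If $\hat R_{a_j}=1$, then the line $a$ is not in $X$, and I would compare instead with a reflection of $N$ whose root lies in another orbit.

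If the direct construction fails, a fallback is to count. By the remark after the definition of maximal reflection groups, the number of reflections plus hidden reflections measures $|NS/S|$. Unsplit orbits then force this count for $N$ to agree with the one computed for $G$ via (\ref{Galphareflects}), orbit by orbit.

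\emph{Part (ii).} Let $N'$ be the maximal reflection group of $N$, generated by $N$ and the reflections $\mu n$ given by hidden reflections $n\in N$. By Lemma \ref{normalsgmaxreflectgp}, $N'$ has the same collineation group as $N$. The set of such reflections is stable under conjugation by $G$, so $N'$ is again a normal reflection subgroup of $G$, at least inside the maximal reflection group of $G$, and by (i) it also has a split orbit.

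Let $M$ be the collineation-preserving normal reflection subgroup immediately above $N$. I would check that each reflection $\mu n$ lies in $M$: its root line is a root line of $N$, and its order divides the corresponding $k_j$, so it should lie in $R_{a_j}$, and the label of $M$ should dominate the label of $N'$. Then $N\subseteq N'\subseteq M$. Since $N'\ne M$ because their collineation groups differ, and $M$ covers $N$ in the lattice, this forces $N'=N$, i.e.\ $N$ is a maximal reflection group.
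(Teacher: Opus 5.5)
Your direction ``$G=NS\Rightarrow$ no split orbit'' is correct. It is in substance the only half of (i) that the paper's proof actually argues: non-transitivity of $N$ on some $\hat R_{a_j}^G$ forces a smaller collineation group.

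The converse, which you rightly isolate as step (b), cannot be completed, because it is false as stated. Take $G=G(2,1,4)=W(B_4)$, of type $12C_2,4C_2$, and $N=G(2,2,4)=W(D_4)=G^{(2,1)}$. By Lemma \ref{G(m,p,d)-orbits} (rank $d\ge3$), $N$ is transitive on the $12$ root lines of the first orbit, and $\hat R_{a_2}=1$, so nothing splits. But $\diag(-1,1,1,1)\notin NS=N$ (here $S=\{\pm I\}\subset N$), and (\ref{Gmpdcollinorder}) gives collineation groups of orders $192$ and $96$. On your set $X$ of $12$ lines, (a) holds since $N$ is irreducible, yet the images of $G$ and $N$ in $\mathrm{Sym}(X)$ have orders $192$ and $96$, so (b) fails.

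The same happens for $G(m,q,d)\lhd G(m,p,d)$ with $d\ge3$, $p\mid q\mid m$ and $\gcd(p,d)\ne\gcd(q,d)$. It also happens for the paper's own grey subgroups $G(2,2,4)\lhd G_{28}$ and $G(3,3,3)\lhd G_{26}$ in Figure \ref{Maximal2orbits}, whose single reflection orbit does not split. Two smaller points: $NS$ is irreducible exactly when $N$ is, so (a) cannot be rescued that way for the diagonal subgroups $(C_{m/q})^d$. And your counting fallback only makes sense in rank $2$, where every nonscalar element is a reflection or a hidden reflection.

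For (ii), the failing step is ``the label of $M$ should dominate the label of $N'$'', i.e.\ $N'\subseteq M$. Nothing places the maximal reflection group of $N$ below an arbitrary collineation preserving cover, and the paper's ``$N\subset N'$, so $N'=N$'' makes the same leap. In $G=G_{11}$, $N=G_5=G^{(1,1,3)}$ is covered by the collineation preserving $M=G_{14}=G^{(2,1,3)}$ (it is grey in Figure \ref{G11-lattice}). But $N'=G_7=G^{(1,2,3)}\not\subseteq G_{14}$, and $G_5$ is not maximal: it has $16<2(12-1)$ reflections, cf.\ Proposition \ref{maximalrefcount} and Theorem \ref{primitivemaximal}. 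So (ii) is false under the hypothesis ``there is a cover''.

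What your argument does prove is the following. If $G$ is maximal, $N$ is not collineation preserving, and \emph{every} normal reflection subgroup covering $N$ is collineation preserving, then $N$ is maximal. Indeed, maximality of $G$ puts each reflection $\mu n$ in $G$, and these form a $G$-stable set. So $N'\lhd G$ and $N\subseteq N'\subseteq NS$, whence $N'_{\rm coll}=N_{\rm coll}$. If $N\subsetneq N'$, some cover $X$ of $N$ lies in $N'$, and then $G_{\rm coll}=X_{\rm coll}\subseteq N'_{\rm coll}=N_{\rm coll}\subsetneq G_{\rm coll}$, a contradiction.

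Maximality of $G$ cannot be dropped. In $G=G_{14}$, the subgroup $G_5=G^{(1,3)}$ has $G_{14}$ as its only cover, yet $G_7\not\subseteq G_{14}$. Part (i) plays no role in this argument.
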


\begin{proof}
Some $\hat R_{a_j}^G$ is a split orbit of $N$ if and only 
the (collineation) action of $N$ on $\hat R_{a_j}^G$ is not transitive, 
	i.e., is different from the (collineation) action of $G$ (which is transitive), 
	and so the collineation groups for $N$ and $G$ must be different.

If $N$ has a split orbit, then we may add to it scalar matrices from $G$ so that its
hidden reflections become reflections, thereby obtaining $N'$ the maximal
reflection group for $N$ as a normal reflection subgroup of $G$. Since $N'$ is not collineation
preserving and $N\subset N'$, we must have $N'=N$ when $N$ is in the lower boundary of the 
collineation preserving subgroups, 
i.e., $N$ must be a maximal reflection group in this case.
\end{proof}

In other words, $N$ is a collineation preserving subgroup of $G$ 
if and only if it has no split orbits, i.e.,
$$ \hat R_{a_j}^N = \hat R_{a_j}^G, \qquad\forall j, $$
and as we work down the lattice of normal reflection subgroups as soon as 
we hit one which is not collineation preserving, it is a maximal reflection group.

Later we will show the collineation preserving normal reflection
subgroups $N=G^\ga$ of $G$ are determined 
by the
order of their abelianisation $(G^\ga)_{\rm ab}$ (Corollary \ref{abeliansplitorbitscor}).

We now give a prototypical example of calculating a minimal set of
generators when more than one element from an orbit is required.

\begin{example} 
\label{mingensexampleII}
Consider 
$G=G_{13}=\inpro{F,F^Z,R^2}$,
which has reflection type $12\inpro{F},6\inpro{R^2}$, and order $96$. 
The group $H$ generated by the six reflections 
in the $6\inpro{R^2}$ orbit, i.e.,
\begin{equation}
\label{G13R2orbit}
\bigl\{ \pm\pmat{1&0\cr0&-1},  \pm\pmat{0&1\cr1&0}, 
\pm\pmat{0&i\cr-i&0}\bigr\}, 
\end{equation}
has order $16$, and the group generated by adding any additional 
reflection from the $12\inpro{F}$ orbit has order $32$.
Hence at least two reflections must be taken from the $12\inpro{F}$ orbit
to obtain a generating set of reflections for $G_{13}$.
The action of $H$ on the $12\inpro{F}$ orbit gives three orbits
of size four, i.e.,
\begin{equation}
\label{G13Forbit}
\begin{split}
	&	\bigl\{ \pm{1\over\sqrt{2}}\pmat{1&-1\cr-1&-1},
\pm{1\over\sqrt{2}}\pmat{1&1\cr1&-1}\bigr\}, \quad
\bigl\{ \pm{1\over\sqrt{2}}\pmat{1&i\cr-i&-1},
\pm{1\over\sqrt{2}}\pmat{1&-i\cr i&-1}\bigr\}, \\
	&\qquad\qquad\qquad\quad
	\bigl\{ \pm{1\over\sqrt{2}}\pmat{0&1+i\cr1-i&0},
        \pm{1\over\sqrt{2}}\pmat{0&1-i\cr1+i&0}\bigr\}.
\end{split}
\end{equation}
It can easily be verified that the minimal generating sets 
	for $G_{13}$ are given by 
\begin{itemize} 
\item One reflection from the $6\inpro{R^2}$ orbit (\ref{G13R2orbit}).
\item Two reflections from different $H$-orbits (\ref{G13Forbit}) of
	the $12\inpro{F}$ orbit.
\end{itemize}
The action of the group of order $48$ generated by the reflections in 
	the $12\inpro{F}$ orbit on the $6\inpro{R^2}$ orbit gives
	a single orbit.
\end{example}

In summary, minimal sets of generating reflections can be 
calculated as follows:

\vskip0.7truecm
{\bf Basic algorithm for calculating the normal
reflection subgroups of $G^{(k_1,\ldots,k_m)}$}
\begin{itemize}
\item Find a minimal set of reflections that generate $G=G^{(k_1,\ldots,k_m)}$, 
by taking at least one element of maximal order from each of 
its reflection orbits $R_{a_j}^G$, and then additional maximal order reflections
from the 
$\inpro{\cup_{\ell \ne j} R_{a_\ell}^G}$-orbits 
of $R_{a_j}^G$, 
as required (in Example \ref{mingensexampleII} there was at most one element
		taken from each of these orbits).
\item For a normal reflection subgroup $H=G^{(\ga_1,\ldots,\ga_m)}$ of $G$ which
is a subgroup of some $K=G^{(\gb_1,\ldots,\gb_m)}$ in the lattice, 
take the reflections inherited from a minimal generating set of reflections 
for $K$, adding additional reflections of maximal order from the
reflection orbits of $K$, as required (see above).
In particular, additional reflections will be required when
		a reflection orbit of $K$ splits into more parts than the
		number of inherited reflections from $K$ that are in it.
\end{itemize}

We have not been completely prescriptive above, 
as calculations can easily be done by hand, 
and there is an art to it: one would like generators that reflect 
the lattice structure and are obviously related to the original ones. 
If calculating all the normal subgroups, then one
would work down the lattice considering the inherited reflections for
all the reflection subgroups $K$ immediately above a given $H$.


\subsection{All the reflection subgroups of $G_{11}$} 

We will now calculate all the reflection subgroups of $G_{11}$ by 
identifying its set of maximal reflection subgroups,
and then working down from these.

\begin{example} 
\label{reflectsubgroupsG11}
(Reflection subgroups of $G_{11}$)
The maximal reflection group $G_{11}$ has $135$ subgroups 
	(up to conjugacy), of which $38$ are reflection subgroups,
	with $12$ being normal, i.e.,
\begin{equation}
\label{list1}
1,\, G_5,\, G_7,\, G_8,\, G_9,\, G_{10},\, G_{11},\, G_{12},\,
G_{13},\, G_{14},\, G_{15},\, G(4,2,2).
\end{equation}
The remaining $26$ non-normal reflection subgroups must be normal
subgroups in their maximal reflection group (since they are contained in the
maximal reflection group $G_{11}$). 

The following normal reflection subgroups of the maximal reflection group
$G_{7}$ 
are not normal in $G_{11}$
\begin{equation}
\label{list2}
G_4,\,  G_6, 
\end{equation}
and from the normal maximal reflection subgroup $G(4,2,2)$ of (\ref{list1}), 
we obtain
\begin{equation}
\label{list3}
B_2,\,C_2\times C_2, \qquad \hbox{where $B_2=G(2,1,2)\cong G(4,4,2)$}.
\end{equation}
On the lattice of Figure \ref{G2p22-lattice} 
	($p=2$), these appear as $G(2,1,2)$, $G(2,1,2)$,
$G(4,4,2)$, and $C_2\times C_2$, $G(2,2,2)$, $G(2,2,2)$, respectively,
with each triple being conjugate in $G_{11}$.
The reflection group $C_2\times C_2$ above is also maximal, 
and its normal subgroup $C_2$ corresponds to a reflection orbit. 
We will consider this later with the other such cases.

	The remaining non-normal maximal reflection subgroups of $G_{11}$
are contained in the non-normal maximal reflection subgroups 
$G(6,2,2)$ and $G(8,2,2)$. 
	
The seven nontrivial normal reflection subgroups of 
$G(6,2,2)$ give new reflection subgroups of $G_{11}$
	(see  Figure \ref{G2p22-lattice}, $p=3$), i.e., 
\begin{equation}
\label{list4}
G(6,2,2),\, G(3,1,2),\,  G(3,1,2),\, G(6,6,2),\,
C_3\times C_3,\, G(3,3,2),\, G(3,3,2).
\end{equation}
Here the repeated groups $G(3,1,2)$ and $G(3,3,2)$ are not 
conjugate in $G_{11}$. The reflection group $C_3\times C_3$ above
is a maximal reflection group, and we will consider its normal 
reflection subgroups later.

We now consider the maximal subgroup $G(8,2,2)$.
This appears as the reflection subgroup of 
	$G_{11}$ generated by the monomial reflections (and hence is imprimitive), 
i.e.,
$$ G(8,2,2)= H = H^{(4,2,2)} = 
\inpro{R,S,F^{Z^2}}, \qquad
2\inpro{R},4\inpro{S},4\inpro{F^{Z^2}}. $$
This has three conjugates in $G_{11}$ (and so is not normal).
Ten of the twelve normal reflection subgroups of $G(8,2,2)$ give new reflection
subgroups of $G_{11}$ (see Fig.\ \ref{G822-lattice}), i.e.,
\begin{align}
\label{list5}
& G(8,2,2),\ G(8,4,2),\, G(4,1,2),\, G(4,1,2),\, G(8,8,2),\, \cr
& G(4,2,2),\, C_4\times C_4,\, B_2,\, B_2,\, C_2\times C_2.
\end{align}
Here $G(4,2,2)$ appears twice as a subgroup of $G(8,2,2)$, including
	once as a normal subgroup of $G_{11}$, which was already counted in  (\ref{list1}),
	and $B_2$ appears twice as $G(4,4,2)$. 
The subgroup $G(4,2,2)$ above is a maximal reflection group, and its 
normal reflection subgroups give no new reflection subgroups of $G_{11}$.

Finally, we consider the reflection subgroups of the maximal reflection groups
$$ C_2\times C_2=\inpro{F,-F}, \qquad C_3\times C_3=\inpro{Z,e^{2\pi i\over3} Z}, \qquad
C_4\times C_4 = \inpro{R,iR} \supset \inpro{R^2,-R^2} = C_2\times C_2, $$
of (\ref{list3}), (\ref{list4}) and (\ref{list5}). 
These are (see Example \ref{Maximalonedim})
\begin{equation}
\label{list6}
C_2=\inpro{F}, \qquad C_3=\inpro{Z}, \qquad 
	C_2\times C_4=\inpro{R^2,iR}, \quad
C_4=\inpro{R}, \quad C_2=\inpro{R^2}, 
\end{equation}
where $C_2=\inpro{R^2}$ is not parabolic.
Thus, the $38=12+2+2+7+10+5$ reflection subgroups of $G_{11}$ are given by
(\ref{list1}), 
(\ref{list2}), \ldots
(\ref{list6}).
\end{example}

\begin{figure}[h]
\begin{center}
\begin{tikzpicture}
    \matrix (A) [matrix of nodes, row sep=1.0cm, column sep = -1.75 cm]
    {
	    &&& $G^{(4,2,2)}=G(8,2,2)$ &&& \\
	& $G^{(2,2,2)}=G(8,4,2)$ && $G^{(4,1,2)}=G(4,1,2)$ && $G^{(4,2,1)}=G(4,1,2)$ & \\
	$G^{(1,2,2)}=G(8,8,2)$ && $G^{(2,1,2)}=G(4,2,2)$ && $G^{(4,1,1)}=C_4\times C_4$ && $G^{(2,2,1)}=G(4,2,2)$ \\
	& $G^{(1,1,2)}=G(4,4,2)$ &&& $G^{(2,1,1)}=C_2\times C_2$ && $G^{(1,2,1)}=G(4,4,2)$ \\
	&&&& $G^{(1,1,1)}=1$ && \\
    };
    \draw (A-1-4)--(A-2-2);
    \draw (A-1-4)--(A-2-4);
    \draw (A-1-4)--(A-2-6);
    \draw (A-2-2)--(A-3-1);
    \draw (A-2-2)--(A-3-3);
    \draw (A-2-2)--(A-3-7);
    \draw (A-2-4)--(A-3-3);
    \draw (A-2-4)--(A-3-5);
    \draw (A-2-6)--(A-3-5);
    \draw (A-2-6)--(A-3-7);
    \draw (A-3-1)--(A-4-2);
    \draw (A-3-3)--(A-4-2);
    \draw (A-3-3)--(A-4-5);
    \draw (A-3-5)--(A-4-5);
    \draw (A-3-7)--(A-4-5);
    \draw (A-3-7)--(A-4-7);
    \draw (A-3-1)--(A-4-7);

    \draw (A-4-2)--(A-5-5);
    \draw (A-4-5)--(A-5-5);
    \draw (A-4-7)--(A-5-5);
\end{tikzpicture}
\end{center}
\vskip -0.7 truecm
\caption{\label{G822-lattice}
	The lattice of normal reflection subgroups of 
	$G^{(4,2,2)}=G(8,2,2)\subset G_{11}$, $2C_4,4C_2,4C_2$,
where $C_2\times C_2=G(2,1,2)$, $G(4,4,2)=B_2$. }
\end{figure}

\begin{example}
\label{G(8,2,2)example}
It is easy to calculate minimal generating sets of reflections for the $12$ normal reflection subgroups of
$$ G(8,2,2)= H = H^{(4,2,2)} = 
\inpro{R,S,F^{Z^2}}, \qquad
2\inpro{R},4\inpro{S},4\inpro{F^{Z^2}}, $$
which give (\ref{list5}).
Those which preserve the collineation group are
(see Fig.\ \ref{G822-lattice})	
\begin{align*}
G(8,4,2) = H^{(2,2,2)} = \inpro{R^2,S,F^{Z^2}}, & \qquad
  2\inpro{R^2},4\inpro{S},4\inpro{F^{Z^2}}, \cr
G(4,1,2) = H^{(4,1,2)} = \inpro{R,F^{Z^2}},& \qquad
  2\inpro{R},4\inpro{F^{Z^2}}, \cr
G(4,1,2) = H^{(4,2,1)} = \inpro{R,S}, & \qquad
  2\inpro{R},4\inpro{S}, \cr
G(8,8,2) = H^{(1,2,2)} = \inpro{S,F^{Z^2}}, & \qquad
4\inpro{S},4\inpro{F^{Z^2}}.
\end{align*}
The remaining nontrivial subgroups 
have split orbits and smaller collineation groups
\begin{align*}
G(4,2,2) = H^{(2,2,1)} = \inpro{R^2,S,S^R}, & \qquad
  2\inpro{R^2}, 2\inpro{S}, 2\inpro{S^R}, \cr
G(4,2,2) = H^{(2,1,2)} = \inpro{R^2,F^{Z^2},F^{RZ^2}}, & \qquad
  2\inpro{R^2}, 2\inpro{F^{Z^2}}, 2\inpro{F^{RZ^2}}, \cr
C_4\times C_4 = H^{(4,1,1)} = \inpro{R,R^S}, & \qquad
  \inpro{R},\inpro{R^S}, \cr
G(4,4,2) = H^{(1,2,1)} = \inpro{S,S^R}, & \qquad
  2\inpro{S},2\inpro{S^R}, \cr
G(4,4,2) = H^{(1,1,2)} = \inpro{F^{Z^2},F^{RZ^2}}, & \qquad
  2\inpro{F^{Z^2}},2\inpro{F^{RZ^2}}, \cr
C_2\times C_2 = H^{(2,1,1)} = \inpro{R^2,(R^2)^S}, & \qquad
  \inpro{R^2},\inpro{(R^2)^S}.
\end{align*}
\end{example}

We observe that the normal reflection subgroups $C_3\times C_3$ and $C_2\times C_2,C_4\times C_4$
of the imprimitive reflection groups $G(6,2,2)$ and $G(8,2,2)$, respectively, are
not irreducible, as is allowed by Theorem \ref{Galatticethm}.

In summary, 
all the reflection subgroups of $G_{11}=\inpro{F,R,Z}$ can be calculated by knowing that
$$ G(8,2,2)=\inpro{R,S,F^{Z^2}}, \qquad G(6,2,2)= \inpro{Z,F^{Z^2R^2},F^{RZ^2}} $$
are reflection subgroups of $G_{11}$ which are not normal and are maximal reflection groups.
The reflection subgroups of $G_{11}$ can then be found by calculating the normal 
reflection subgroups of $G_{11}$, $G(8,2,2)$, $G(6,2,2)$, moving down the lattice 
to the point where another maximal reflection group is found,
i.e., a split orbit (see Figure \ref{G11-lattice-maximal}),
and then applying this process to that maximal reflection group.

\begin{figure}[H]
\begin{center}
\begin{tikzpicture}
    \matrix (A) [matrix of nodes, row sep=1.0cm, column sep = 0.2 cm]
    {
  & $G_{11}$ && \\
  $G_7$ & $G(8,2,2)$ & $G(6,2,2)$ & \\
  $G(4,2,2)$ & $G(4,2,2)$ & $C_4\times C_4$ & $C_3\times C_3$ \\
	$C_2\times C_2$ & $C_2\times C_2$ && \\
    };
    \draw (A-1-2)--(A-2-1);
    \draw (A-1-2)--(A-2-2);
    \draw (A-1-2)--(A-2-3);
    \draw (A-2-1)--(A-3-1);
    \draw (A-2-2)--(A-3-1);
    \draw (A-2-2)--(A-3-2);
    \draw (A-2-2)--(A-3-3);
    \draw (A-2-3)--(A-3-4);
    \draw (A-3-1)--(A-4-1);
    \draw (A-3-2)--(A-4-1);
    \draw (A-3-2)--(A-4-2);
    \draw (A-3-3)--(A-4-2);
\end{tikzpicture}
\end{center}
\vskip -0.5truecm
\caption{\label{G11-lattice-maximal}
	The lattice of the maximal reflection subgroups of $G_{11}$, up to conjugacy
	($G(8,2,2)$ and $G(6,2,2)$ are not normal, and have three and four conjugates). 
Here $C_3\times C_3=\inpro{Z,e^{2\pi i\over3}Z}$,
	$C_4\times C_4=\inpro{R,iR}\supset C_2\times C_2=\inpro{R^2,-R^2}$,
	$C_2\times C_2=\inpro{F,-F}$.
See Table \ref{G11maximalTable} for the reflection type, collineation group, and
	generators for each subgroup.}
\end{figure}

\begin{table}[h]
\caption{
\label{G11maximalTable}
	The maximal reflection subgroups of $G_{11}$ as in 
 the lattice of Figure \ref{G11-lattice-maximal}. }
\smallskip
\begin{tabular}{ |  >{$}l<{$} | >{$}c<{$} | >{$}l<{$} | >{$}l<{$} | >{$}l<{$} | >{$}l<{$} | >{$}l<{$} |  >{$}l<{$} | }
\hline
	&&&&&&&\\[-0.3cm]
\hbox{ST} & \hbox{Rank} & \hbox{Order} & \hbox{Reflection type} & \hbox{$K_a$} &
	\hbox{Reflections} & G_{\rm coll} & \hbox{Generators} \\[0.1cm]
\hline
	&&&&&&&\\[-0.3cm]
G_{11} & 2 & 576 & 12 C_2, 6 C_4, 8 C_3 & C_{24} & 2^{18} 3^{16} 4^{12} & S_4 & F,R,Z \\
G_7 & 2 & 144 & 6 C_2, 4 C_3, 4 C_3 & C_{12} & 2^6 3^{16} & A_4 & R^2,Z,Z^R \\
	&&&&&&&\\[-0.4cm]
G(8,2,2) & 2 & 64 & 2C_4,4C_2,4C_2 & C_8 & 2^{10}4^{4} & D_4 & R,S,F^{Z^2} \\
G(6,2,2) & 2 & 36 & 2C_3,3C_2,3C_2 & C_6 & 2^{6}3^{4} & D_3 & Z^S,F,F^{Z^2} \\
G(4,2,2) & 2 & 16 & 2C_2,2C_2,2C_2 & C_4 & 2^{6} & C_2\times C_2 & R^2,S,S^R \\
G(4,2,2) & 2 & 16 & 2C_2,2C_2,2C_2 & C_4 & 2^{6} & C_2\times C_2 & R^2,F^{Z^2},F^{SZ^2} \\
	&&&&&&&\\[-0.4cm]
C_4\times C_4 & 2 & 16 & C_4, C_4 & C_{4} & 2^2 4^{4} & C_4 & R,iR \\
C_3\times C_3 & 2 & 9 & C_3, C_3 & C_{3} & 3^{4} & C_3 & Z,e^{2\pi i\over 3}Z \\
C_2\times C_2 & 2 & 4 & C_2, C_2 & C_{2} & 2^{2} & C_2 & F,-F \\
C_2\times C_2 & 2 & 4 & C_2, C_2 & C_{2} & 2^{2} & C_2 & R^2,-R^2 \\ [0.1cm]
\hline
\end{tabular}
\end{table}

\section{The normal reflection subgroups of the primitive complex reflection groups} 

We are now in a position to easily identify the remaining primitive complex reflection
groups which are maximal reflection groups, and thereby complete their classification.

First we have $G_7$ and its normal reflection subgroups (with generators).

\begin{example} (Normal reflection subgroups of $G_{7}$).
There are eight normal reflection subgroups (see the indices in Figure \ref{G7-lattice}) of 
the maximal reflection group
$$ G_7=G_7^{(2,3,3)}=\inpro{R^2,Z,Z^R}, \qquad
6\inpro{R^2},4\inpro{Z},4\inpro{Z^R} =6C_2,4C_3,4C_3, $$
which has collineation group $A_4$.
The first level down in the lattice, we have
\begin{align*}
G_5 = G_7^{(1,3,3)} =\inpro{Z,Z^R}, &\qquad 4\inpro{Z},4\inpro{Z^R} =4C_3,4C_3, \cr
G_6 = G_7^{(2,1,3)} =\inpro{R^2,Z^R}, &\qquad 6\inpro{R^2},4\inpro{Z^R} = 6C_2,4C_3, \cr
G_6 = G_7^{(2,3,1)} =\inpro{R^2,Z}, &\qquad 6\inpro{R^2},4\inpro{Z}= 6C_2,4C_3.
\end{align*}
At the next level, we have
\begin{align*}
G_4 = G_7^{(1,1,3)} =\inpro{Z^R,(Z^R)^S}, &\qquad 4\inpro{Z^R} =4C_3, \cr
G(4,2,2) = G_7^{(2,1,1)} =\inpro{R^2,S,S^R}, &\qquad 2\inpro{R^2},2\inpro{S},2\inpro{S^R} =2C_2,2C_2,2C_2, \quad\hbox{(split orbit)} \cr
G_4 = G_7^{(1,3,1)} =\inpro{Z,Z^S}, &\qquad 4\inpro{Z} =4C_3. 
\end{align*}
The subgroup $G(4,2,2)$ is the only one above with a split orbit, and hence 
a smaller collineation group, i.e., $C_2\times C_2$.
\end{example}

\begin{figure}[H]
\begin{center}
\begin{tikzpicture}
    \matrix (A) [matrix of nodes, row sep=1.0cm, column sep = -1.1 cm]
    {
	    &&& $G_{7}^{(2,3,3)}=G_{7}$ &&& \\
	& $G_{7}^{(1,3,3)}=G_{5}$ && $G_{7}^{(2,1,3)}=G_{6}$ && $G_{7}^{(2,3,1)}=G_{6}$ & \\
	&& $G_{7}^{(1,1,3)}=G_{4}$ && \greybox{$G_{7}^{(2,1,1)}=G(4,2,2)$} && $G_{7}^{(1,3,1)}=G_4$ \\
	& && \greybox{$G_{7}^{(1,1,1)}=1^*$} &&& \\
    };
    \draw (A-1-4)--(A-2-2);
    \draw (A-1-4)--(A-2-4);
    \draw (A-1-4)--(A-2-6);

    \draw (A-2-2)--(A-3-3);
    \draw (A-2-4)--(A-3-3);
    \draw (A-2-4)--(A-3-5);
    \draw (A-2-6)--(A-3-5);
    \draw (A-2-6)--(A-3-7);
    \draw (A-2-2)--(A-3-7);

    \draw (A-3-3)--(A-4-4);
    \draw (A-3-5)--(A-4-4);
    \draw (A-3-7)--(A-4-4);

\end{tikzpicture}
\end{center}
\caption{\label{G7-lattice} The lattice of normal reflection subgroups 
	of $G_{7}=G_{7}^{(2,3,3)}$ (those in grey have smaller collineation groups, i.e., split orbits).
We observe that $G_6$ and $G_4$ appear twice as
	normal subgroups.}
\end{figure}

We now consider the maximal reflection group $G_{19}$. 
Define a reflection of order $5$ by 
$$ M:=\hbox{$(-{\tau\over4}+{\sqrt{1-\tau^2/4}\over2} i)$}
\pmat{-\tau+i&1-\tau\cr \tau-1&-\tau-i}, \qquad
\tau={1+\sqrt{5}\over2}. $$ 
and (in the notation of \cite{LT09}, $r=R^2$, $r_1=Z^2$, $r_5=M$)
$$ G_{19}=G_{19}^{(2,3,5)}=\inpro{R^2,Z,M}. $$

\begin{figure}[H]
\begin{center}
\begin{tikzpicture}
    \matrix (A) [matrix of nodes, row sep=1.0cm, column sep = 0.4 cm]
    {
 & $G_{19}^{(2,3,5)}=G_{19}$ & \\
$G_{19}^{(2,3,1)}=G_{21}$ & $G_{19}^{(2,1,5)}=G_{17}$ & $G_{19}^{(1,3,5)}=G_{18}$ \\
$G_{19}^{(2,1,1)}=G_{22}$ & $G_{19}^{(1,3,1)}=G_{20}$ & $G_{19}^{(1,1,5)}=G_{16}$ \\
        & \greybox{$G_{19}^{(1,1,1)}=1^*$} &  \\
    };
    \draw (A-1-2)--(A-2-1);
    \draw (A-1-2)--(A-2-2);
    \draw (A-1-2)--(A-2-3);

    \draw (A-2-1)--(A-3-1);
    \draw (A-2-1)--(A-3-2);
    \draw (A-2-2)--(A-3-1);
    \draw (A-2-2)--(A-3-3);
    \draw (A-2-3)--(A-3-2);
    \draw (A-2-3)--(A-3-3);

    \draw (A-3-1)--(A-4-2);
    \draw (A-3-2)--(A-4-2);
    \draw (A-3-3)--(A-4-2);
\end{tikzpicture}
\end{center}
\vskip -0.6truecm
\caption{\label{G19-lattice} The lattice of normal reflection subgroups for $G_{19}$.
Here the intersection $G_{22}\cap G_{20}\cap G_{16}$ is a collineation preserving normal
	subgroup of $G_{19}$, with centre $\inpro{-1}$.}
\end{figure}

\begin{example} (Normal reflection subgroups of $G_{19}$).
There are eight normal reflection subgroups (see the indices in Figure \ref{G19-lattice}) of
the maximal reflection group
	$$ G_{19}=G_{19}^{(2,3,5)}=\inpro{R^2,Z,M}, \qquad
30\inpro{R^2},20\inpro{Z},12\inpro{M} =30C_2,20C_3,12C_3, $$
with no split orbits and no repeated subgroups. The common collineation
group of the nontrivial normal reflection subgroups is $A_5$.
The first level down in the lattice, we have
\begin{align*}
G_{18} = G_{19}^{(1,3,5)} =\inpro{Z,M}, &\qquad 20\inpro{Z},12\inpro{M} =20C_3,12C_5, \cr
G_{17} = G_{19}^{(2,1,5)} =\inpro{R^2,M}, &\qquad 30\inpro{R^2},12\inpro{M} =30C_2,12C_5, \cr
G_{21} = G_{19}^{(2,3,1)} =\inpro{R^2,Z^M}, &\qquad 30\inpro{R^2},12\inpro{Z^M} =30C_2,20C_3.
\end{align*}
We observe that the reflections $Z\in G_{11}$ and $Z^M\not\in G_{11}$ of order $3$, 
play different roles, since
\begin{equation}
\label{notnormalsubgroupsex}
Z\in G_{21} =\inpro{R^2,Z^M}, \qquad \inpro{R^2,Z}=G_6.
\end{equation}
At the next level, we have
\begin{align*}
G_{16} = G_{19}^{(1,1,5)} =\inpro{M,M^{R^2}}, &12\inpro{M} =12C_5, \cr
G_{20} = G_{19}^{(1,3,1)} =\inpro{Z,Z^M}, &\qquad 20\inpro{Z} =20C_3, \cr
G_{22} = G_{19}^{(2,1,1)} =\inpro{R^2,(R^2)^Z,(R^2)^M}, & \qquad 30\inpro{R^2}= 30C_2, 
\end{align*}
with the identity group below that.
The intersection $G_{16}\cap G_{20} \cap G_{22}$
is a normal collineation preserving subgroup of $G_{19}$ with order $120$
	(which is not a reflection group).
\end{example}

The observation (\ref{notnormalsubgroupsex}) that $G_6$ is a necessarily not normal
subgroup of $G_{21}$ suggests other such subgroup relationships between the normal
subgroups of $G_7$, $G_{11}$ and $G_{19}$. 
We observe that the reflection types of these groups are
$$ G_6:\ 6C_2,4C_3, \qquad  G_{21}:\ 30C_2,20C_3, $$
and so such an inclusion could hold, but not as a normal subgroup. 
By considering such possibilities, it is easy to determine all
such subgroup relationships (see Figure \ref{GjInclusions}). 
We first observe that since $S_4$ is not a subgroup of $A_5$ (by Lagrange's theorem),
the groups with maximal reflection group $G_{11}$ (collineation group $S_4$) 
and with maximal reflection group $G_{19}$ (collineation group $A_5$)
cannot be subgroups of each other. Therefore, only a subgroup of $G_7$ could be
contained in a collineation preserving subgroup of $G_{11}$ or $G_{19}$. 
A basic set of such inclusions (for the generators of Table \ref{GjTablerank2}), 
which implies all others, is
$$ G_7=G_7\cap G_{21} \subset G_{21} \qquad \hbox{($G_7$ has five conjugates in $G_{21}$ and $G_{19}$)}, $$
$$ G_5=G_5\cap G_{20} \subset G_{20} \qquad \hbox{($G_5$ has five conjugates in $G_{20}$ and $G_{19}$)}. $$
In particular, the first inclusion gives our initial 
observation: $G_6\subset G_7\subset G_{21}$, and we observe that
$$ G_{11} \cap G_{19} = G_7. $$

\begin{figure}[!h]
\begin{center}
\begin{tikzpicture}
    \matrix (A) [matrix of nodes, row sep=0.5cm, column sep = 0.2 cm]
	{ &&& \greybox{$G_{11}$} &&&& & && \greybox{$G_{19}$} &\\
	& $G_{15}$ && $G_{9}$ && $G_{10}$ &&&& $G_{21}$ & $G_{17}$ & $G_{18}$   \\
	$G_{14}$ && $G_{13}$ && $G_{8}$ && \greybox{$G_7$} &&& $G_{22}$ & $G_{20}$ & $G_{16}$ \\
	& $G_{12}$ && && $G_5$&& $G_6$ && &&  \\
	& && && & $G_4$ && &&& \\
    };
    \draw (A-1-4)--(A-2-2); \draw (A-1-4)--(A-2-4); \draw (A-1-4)--(A-2-6);
    \draw (A-2-2)--(A-3-1); \draw (A-2-2)--(A-3-3); \draw (A-2-4)--(A-3-3);
    \draw (A-2-4)--(A-3-5); \draw (A-2-6)--(A-3-5); \draw (A-2-6)--(A-3-7);
    \draw  (A-2-2)--(A-3-7); \draw (A-3-1)--(A-4-2); \draw (A-3-3)--(A-4-2);
    \draw  (A-3-1)--(A-4-6); \draw (A-3-7)--(A-4-6); \draw (A-3-7)--(A-4-8);
    \draw (A-4-6)--(A-5-7); \draw (A-4-8)--(A-5-7); 
	\draw (A-1-11)--(A-2-10);
    \draw (A-1-11)--(A-2-11); \draw (A-1-11)--(A-2-12); \draw (A-2-10)--(A-3-10);
    \draw (A-2-10)--(A-3-11); \draw (A-2-11)--(A-3-10); \draw (A-2-11)--(A-3-12);
	\draw [red,thick,dashed] (A-4-6)--(A-3-11);
	\draw [red,thick,dashed] (A-2-10)--(A-3-7);
	\draw (A-2-12)--(A-3-12);
	\draw (A-2-12)--(A-3-11);
\end{tikzpicture}
\end{center}
\vskip-0.6truecm
\caption{
\label{GjInclusions}
The inclusions between the primitive complex reflection groups
$G_4,\ldots,G_{22}$. 
The lines indicate normal subgroups of $G_{11}$, $G_7$, $G_{19}$,
and the dashed lines indicate the further inclusions
$G_7\subset G_{21}$ and $G_5\subset G_{20}$,
with $G_7$ and $G_5$ having five conjugates in those subgroups
	of $G_{19}$ that contain them (and hence not being normal).}
\end{figure}

The remaining primitive complex reflection groups have reflections of 
order $2$, $3$ (which are not composite), 
and so only those with more than one reflection orbit can have normal subgroups.
These groups are $G_{26}$ and $G_{28}$ (see Table \ref{GjTablerankdge3}), 
and the corresponding normal subgroups are given in Figure \ref{Maximal2orbits}.

\begin{figure}[H]
\begin{center}
\begin{tikzpicture}
    \matrix (A) [matrix of nodes, row sep=1.0cm, column sep = -1.2 cm]
    {
            & $G_{26}^{(2,3)}=G_{26}$ & \\
  \quad $G_{26}^{(1,3)}=G_{25}$ \quad & & \greybox{$G_{26}^{(2,1)}=G(3,3,3)$}  \\
        & \greybox{$G_{26}^{(1,1)}=1^*$} &  \\
    };
    \draw (A-1-2)--(A-2-1);
    \draw (A-1-2)--(A-2-3);

    \draw (A-2-1)--(A-3-2);
    \draw (A-2-3)--(A-3-2);
\end{tikzpicture}
\begin{tikzpicture}
    \matrix (A) [matrix of nodes, row sep=1.0cm, column sep = -1.2 cm]
    {
 & $G_{28}^{(2,2)}=G_{28}$ & \\
        \greybox{$G_{28}^{(1,2)}=G(2,2,4)$} & & \greybox{$G_{28}^{(2,1)}=G(2,2,4)$} \\
	& \greybox{$G_{28}^{(1,1)}=1^*$} &  \\
    };
    \draw (A-1-2)--(A-2-1);
    \draw (A-1-2)--(A-2-3);
    \draw (A-2-1)--(A-3-2);
    \draw (A-2-3)--(A-3-2);
\end{tikzpicture}
\end{center}
\vskip-0.6truecm
\caption{The normal reflection subgroup lattices for $G_{26}$ (rank $3$)
and $G_{28}$ (rank $4$). 
Here $G_{25}$ is a collineation preserving subgroup of
(its maximal reflection group)$G_{26}$.}
\label{Maximal2orbits}
\end{figure}

We now collect our results to characterise the primitive complex 
reflection groups which are maximal reflection groups.

\begin{theorem}
\label{primitivemaximal}
The primitive complex reflection groups which are maximal reflection groups are
\begin{equation}
\label{maximalprimitive}
G_7,\ G_{11},\ G_{19},\ G_{23},\ G_{24},\ G_{26},\ldots,G_{37},
\end{equation}
with the remaining ones being proper normal subgroups of the maximal reflection groups
that contain them. The only other nontrivial normal reflection subgroups of the 
$17$ groups of (\ref{maximalprimitive}) are the following three
full rank imprimitive complex reflection groups
\begin{equation}
\label{imprimnormalsgs}
G(4,2,2)\lhd G_7, \quad G(4,2,2) \lhd G_{11},  \quad
G(3,3,3)\lhd G_{26}, \quad G(2,2,4) \lhd G_{28},  \quad
\end{equation}
which are precisely the three (imprimitive) complex reflection groups with 
more than one system of imprimitivity.
\end{theorem}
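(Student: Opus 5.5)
The plan is to run through the Shephard--Todd list $G_4,\ldots,G_{37}$, using the reflection type of each group (tabulated in Tables \ref{GjTablerank2} and \ref{GjTablerankdge3}) together with Theorem \ref{Galatticethm} and Theorem \ref{splitorbitcollineation}. The rank $2$ groups have already been handled. The examples for $G_{11}$, $G_7$ and $G_{19}$ show that every rank $2$ primitive group $G_4,\ldots,G_{22}$ occurs in one of the lattices of Figures \ref{G11-lattice}, \ref{G7-lattice}, \ref{G19-lattice}. Inside these lattices, the groups lying in the lower boundary of the collineation preserving sublattice are maximal by Theorem \ref{splitorbitcollineation}(ii); this is how $G_7$ arises. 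The groups $G_{11}$ and $G_{19}$ are maximal by the reflection count of Proposition \ref{maximalrefcount}. Every other rank $2$ group is a proper collineation preserving normal subgroup of $G_7$, $G_{11}$ or $G_{19}$. By Lemma \ref{normalsgmaxreflectgp}, this is exactly the statement that it is not maximal.

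For rank $d\ge3$, the first step is to compute the centre of each of $G_{23},\ldots,G_{37}$. A hidden reflection is $\lambda r$ with $\lambda I\notin G$ and $r$ a reflection, and it must normalise $G$. So I would check, group by group, that $G$ is not a proper normal subgroup of a reflection group with the same collineation group. For $G_{23},G_{24},G_{26},\ldots,G_{37}$, I would show that adjoining a scalar $\lambda I$ with $\lambda r\in G$ is impossible. This can be done by a determinant and eigenvalue argument: for $d\ge3$, an element of $G$ with spectrum $\{\mu,\nu^{(d-1)}\}$ and $\nu\ne1$ would have to appear in the known list of conjugacy classes of $G$. The one exception is $G_{25}$, which by Figure \ref{Maximal2orbits} is a collineation preserving normal subgroup of $G_{26}$ and hence is not maximal. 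This step is the main obstacle, because it is the one place where a case-by-case appeal to the known classes and centres of the exceptional groups seems unavoidable. I would try first to reduce it to a comparison of the orders $|G|/|Z(G)|$ of the collineation groups: two groups with the same collineation group must have equal such orders, and the tables show these orders are distinct except for $G_{25}$ and $G_{26}$.

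Next, the normal reflection subgroups of the maximal groups are read off from Theorem \ref{Galatticethm}. Every reflection in $G_{23},G_{24},G_{27},\ldots,G_{37}$ has order $2$ or $3$, so only groups with at least two reflection orbits have nontrivial proper normal reflection subgroups. By the tables, these are $G_{26}$ (type $C_2,C_3$) and $G_{28}$ (type $C_2,C_2$). The lattices of Figure \ref{Maximal2orbits} then give $G_{25}$, $G(3,3,3)$ and the two copies of $G(2,2,4)$. Collecting these with the $G(4,2,2)$ found in the lattices of $G_7$ and $G_{11}$ yields the list (\ref{imprimnormalsgs}). All of them have full rank by the second part of Theorem \ref{Galatticethm}.

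Finally, I would justify the characterisation of $G(4,2,2)$, $G(3,3,3)$ and $G(2,2,4)$ as the imprimitive groups with more than one system of imprimitivity. Suppose a group $H$ has systems of imprimitivity $\mathcal S_1,\ldots,\mathcal S_t$ with $t\ge2$. Its normaliser permutes these systems, so the normaliser can be strictly larger than the monomial normaliser and may be primitive. I would cite the classical fact that among the $G(m,p,d)$ only these three have more than one system of imprimitivity. Consistency with the displayed normal inclusions then follows: each such group is normal in a primitive group that permutes its systems transitively, namely $G_7$ and $G_{11}$ for $G(4,2,2)$, $G_{26}$ for $G(3,3,3)$, and $G_{28}$ for $G(2,2,4)$.
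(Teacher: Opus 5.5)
Your route is essentially the paper's. You read off the rank $2$ cases from the lattices of $G_7$, $G_{11}$, $G_{19}$, and use the single prime-order orbit argument together with the lattices of $G_{26}$ and $G_{28}$ for the normal subgroups. You decide maximality in rank $d\ge3$ by comparing $|G|/|Z(G)|$, which is exactly the paper's argument; once that reduction is in place, the case-by-case check of conjugacy classes is unnecessary. Finally you cite Lehrer--Taylor for imprimitivity. Two statements in your proposal are wrong as written, however.

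First, the collineation orders in Table \ref{GjTablerankdge3} are not distinct apart from $G_{25},G_{26}$: both $G_{32}$ and $G_{33}$ have $|G_{\rm coll}|=25920$. The reduction still works. A group and its maximal reflection group act on the same space, so you only need the orders to be distinct among groups of the same rank, and $G_{32}$ has rank $4$ while $G_{33}$ has rank $5$. You should state this rank restriction explicitly, as the paper does.

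Second, the ``classical fact'' you cite is misstated. Theorem 2.16 of \cite{LT09} lists four groups with more than one system of imprimitivity. The fourth is $G(2,1,2)\cong G(4,4,2)$, the dihedral group of order $8$ of Example \ref{G212example}. For it, both $\{\CC e_1,\CC e_2\}$ and $\{\CC(e_1+e_2),\CC(e_1-e_2)\}$ are systems of imprimitivity. So the final ``precisely'' clause cannot be established as literally stated. You need the caveat the paper's proof adds: apart from this real dihedral group, which is not a normal subgroup of any of the $17$ groups, the groups in (\ref{imprimnormalsgs}) are exactly those with several systems of imprimitivity.
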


\begin{proof}
We have already observed which primitive complex reflection 
groups are normal subgroups of their maximal reflection group.

We further observe the primitive reflection groups associated with a 
given maximal reflection group are uniquely determined by the
order of their collineation group. Indeed, the only cases 
where the collineation group of maximal reflection groups have equal orders
is for $G_{19}$ and $G_{23}$ (order $60$, isomorphic to $A_5$),
$G_{32}$ and $G_{33}$ (order $25920$, isomorphic), with all these groups
having different ranks.

It is a simple observation that the three imprimitive complex reflection groups
	of (\ref{imprimnormalsgs}) give all the complex reflection groups with 
	multiple systems of imprimitivity, e.g., Theorem 2.16 of \cite{LT09}
	gives these groups together with $G(2,1,2)\cong G(4,4,2)$, which is 
	a real reflection group (see Example \ref{G212example}).
We also note that the last three of (\ref{imprimnormalsgs}) appear as Exercises 
2.8, 2.6, 2.9 (respectively) of \cite{LT09}.
\end{proof}

By considering the collineation groups and reflection types, we came 
across the following observations
(compare the first with the Remark \ref{primitivequatcentres}).

\begin{example}
The group $G_{35}$ is the only primitive complex reflection group with trivial centre, 
i.e., it is an ordinary representation of its abstract group, or, equivalently 
equals its collineation group.
\end{example}

\begin{example}
\label{oneorbitexample}
The $19$ primitive complex reflection groups (of $34$)
\begin{equation}
\label{oneprimeorbit}
G_4,\ G_8,\ G_{12},\ G_{16},\ G_{20},\ G_{22},\ldots,G_{25},\ G_{27},\ G_{29},\ldots,G_{37}
\end{equation}
have no nontrivial normal reflection subgroups, by virtue of each having 
a single reflection orbit corresponding to a reflection of prime order 
(either $2$, $3$ or $5$), see Figure \ref{oneorbitlattice}.
\end{example}

\begin{figure}[h]
\begin{center}
\vskip-0.3 truecm
\begin{tikzpicture}
    \matrix (A) [matrix of nodes, row sep=0.7cm, column sep = -0.9 cm]
    {
            $G^{(m)}=G_k$ \\
            \greybox{$G^{(1)}=1$} \\
             \\
    };
    \draw (A-1-1)--(A-2-1);
\end{tikzpicture}
\qquad\qquad
\begin{tikzpicture}
    \matrix (A) [matrix of nodes, row sep=1.0cm, column sep = -0.9 cm]
    {
            $G^{(4)}=G_8$ \\
            \greybox{$G^{(2)}=G(4,2,2)$} \\
            \greybox{$G^{(1)}=1$} \\
    };
    \draw (A-1-1)--(A-2-1);
    \draw (A-2-1)--(A-3-1);
\end{tikzpicture}
\end{center}
\vskip-0.5 truecm
\caption{
\label{oneorbitlattice}
The lattice of normal reflection subgroups for those $G_k$ of 
(\ref{oneprimeorbit}) with a single orbit of reflections of prime order $m\in\{2,3,5\}$,
and of $G_8$ a single $6C_4$ orbit (reflections of orders $2$ and $4$), with
the normal subgroup $G(4,2,2)$ having the orbit split into three. }
\end{figure}

\begin{example}
In addition to the inclusions of Figure \ref{GjInclusions}, the groups
$G_{27}$ (rank $3$) and $G_{31}$ (rank $4$) contain 
reflection subgroups of the same rank which are not normal and are maximal reflection
groups (see Figure \ref{incidentalinclusions}).
\end{example}


\begin{figure}[H]
\begin{center}
\vskip-0.5 truecm
\begin{tikzpicture}
    \matrix (A) [matrix of nodes, row sep=1.0cm, column sep = 0.2 cm]
    {
            $G_{20}$ \\
            $G_{5}$ \\
    };
    \draw (A-1-1)--(A-2-1);
\end{tikzpicture}
\quad\quad
\begin{tikzpicture}
    \matrix (A) [matrix of nodes, row sep=1.0cm, column sep = 0.2 cm]
    {
            $G_{21}$ \\
            $G_{7}$ \\
    };
    \draw (A-1-1)--(A-2-1);
\end{tikzpicture}
\quad\quad
\begin{tikzpicture}
    \matrix (A) [matrix of nodes, row sep=1.0cm, column sep = 0.2 cm]
    {
            $G_{27}$ \\
            $G_{23}$ \\
    };
    \draw (A-1-1)--(A-2-1);
\end{tikzpicture}
\qquad\qquad
\begin{tikzpicture}
    \matrix (A) [matrix of nodes, row sep=1.0cm, column sep = 0.2 cm]
    {
            & $G_{31}$ & \\
            $G_{28}$ && $G_{29}$ \\
    };
    \draw (A-1-2)--(A-2-1);
    \draw (A-1-2)--(A-2-3);
\end{tikzpicture}
\end{center}
\vskip-0.6truecm
\caption{
\label{incidentalinclusions}
Some incidental inclusions,
i.e., those not occurring as normal subgroups.}
\end{figure}

Since the complex reflection groups of Example \ref{oneorbitexample}
which have one orbit are not cyclic groups, a minimal generating 
set of the reflections for them will require at least two reflections
from the orbit.
At the other extreme,
we will say that a reflection group is {\bf nicely generated} if its 
minimal generating sets of reflections are precisely those
obtained by taking a reflection from each orbit.
A rank $d$ complex reflection group has a
minimal generating set of $d$ or $d+1$ reflections, with those
generated by $d$ reflections said to be {\bf well-generated}.


\begin{theorem}
\label{nicelygenthm}
The nicely generated primitive complex reflection groups are precisely
$$ G_7,\, G_{11},\, G_{19}, \quad \hbox{(three generators)}, \qquad
 G_6,\, G_{10},\, G_{18}, \quad \hbox{(two generators)}. $$
	The groups $G_6$, $G_{10}$, $G_{18}$ are also well generated (giving
	starry complex polygons).
\end{theorem}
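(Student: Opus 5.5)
Two things have to be shown: that every minimal generating set of reflections of each listed group takes exactly one reflection from each orbit, and that no other primitive group has this property. For the exclusions I would use the remark after (\ref{Rgenpartition}): a generating set $\cR$ needs $|\cR_{a_j}|\ge1$ for every $j$. So if some orbit needs two reflections (as $12\inpro{F}$ does in $G_{13}$, Example \ref{mingensexampleII}), the group is not nicely generated. Nice generation also fails when the one-per-orbit sets do not all generate, or do not have the minimal size.

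First I would dispose of the groups with a single orbit. These are the groups of Example \ref{oneorbitexample} and $G_8$, and none of them is cyclic, so each needs at least two reflections from its one orbit. For the groups with two or more orbits, the key device is a quotient. Let $\cR_j$ be a chosen reflection of maximal order from the $j$-th orbit, and put $N_j=\inpro{\cup_{\ell\ne j}R_{a_\ell}^G}=G^{(k_1,\ldots,k_{j-1},1,k_{j+1},\ldots,k_m)}$. This is normal, so $\inpro{N_j,\cR_j}=G$ exactly when $\cR_j$ together with $N_j$ generates. One orbit suffices for the $j$-th slot precisely when $R_{a_j}^G$ is a single $N_j$-orbit and the other one-per-orbit choices generate $N_j$, not merely sit inside it.

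With that, I would read off the lattices already computed (Figures \ref{G11-lattice}, \ref{G7-lattice}, \ref{G19-lattice}, \ref{Maximal2orbits}), using split orbits from Theorem \ref{splitorbitcollineation}. The groups with two or more orbits that are \emph{not} listed are $G_5$, $G_9$, $G_{13}$, $G_{14}$, $G_{15}$, $G_{17}$, $G_{21}$, $G_{26}$, $G_{28}$. For each I would exhibit an orbit that needs two representatives. The generators already recorded do this: $G_{13}=\inpro{F,F^Z,R^2}$ and $G_5=\inpro{Z,Z^R}$ (whose orbits $4C_3,4C_3$ are $G$-orbits, but a single $Z$ and a single $Z^R$ generate only a proper subgroup unless chosen well). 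Alternatively I would exhibit a one-per-orbit set that fails to generate, such as $\inpro{R^2,Z}=G_6\ne G_{21}$ in (\ref{notnormalsubgroupsex}). For $G_9=\inpro{F,R}$, $G_{14}=\inpro{F,Z}$, $G_{15}$ and $G_{17}$, the natural test is whether some pair $(g,h)$ of representatives lies in a proper reflection subgroup. For example, for $G_{14}$ I would find an $F$-conjugate and a $Z$-conjugate generating the imprimitive $G(6,2,2)$ or a smaller group, using the reflection subgroups of $G_{11}$ in Example \ref{reflectsubgroupsG11}. For $G_{26}$ and $G_{28}$, being of rank $3$ and $4$, two reflections (one per orbit) cannot generate an irreducible group, so more are needed.

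For the positive cases $G_7$, $G_{11}$ and $G_{19}$ I would argue as follows. Take any $g_j$ from orbit $j$ of maximal order, and let $H=\inpro{g_1,g_2,g_3}$. By the inheritance in Lemma \ref{normalrefslatticelemma}, $H$ maps onto each quotient $G/N_j$. Because the orbit lattice is a product of chains with every orbit present, $H$ must meet each orbit in a full-order reflection, and $H$ is then a reflection subgroup not contained in any maximal reflection subgroup of $G$ (Figure \ref{G11-lattice-maximal}, Table \ref{G11maximalTable}, and the analogues for $G_7$ and $G_{19}$). The concrete check is that no proper maximal reflection subgroup, such as $G(8,2,2)$, $G(6,2,2)$ or $G_7$ inside $G_{11}$, contains reflections of maximal order from all three orbits with the right collineation images. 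Here Lemma \ref{collinS4} helps: the $\Psi$-images of the representatives generate $S_4$, while those subgroups have collineation groups $D_4$, $D_3$ and $A_4$. For $G_6$, $G_{10}$ and $G_{18}$ the same argument with two orbits applies, and two generators in rank $2$ give well-generation; the starry polygon remark is then the standard Coxeter identification. The main obstacle is the case-by-case verification that \emph{every} choice of representatives generates, together with the negative cases $G_9$, $G_{15}$ and $G_{17}$. I expect to need explicit computation (Magma-style, or by the collineation argument) rather than a uniform proof.
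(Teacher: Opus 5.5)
\paragraph{The exclusion of $G_{15}$ fails.} No one-per-orbit triple of $G_{15}$ fails to generate it, so your ``natural test'' has nothing to find. To see this, apply to $G_{15}$ the collineation argument you reserve for the positive cases.
\begin{enumerate}
\item By Lemma \ref{collinS4} and the description after it, $\Psi$ sends every reflection in the $12C_2$ orbit to a transposition. It sends every $R^2$-conjugate in the $6C_2$ orbit to a double transposition, and every reflection of order $3$ to a $3$-cycle.
\item A transposition and a $3$-cycle generate either $S_4$ or a point stabiliser $S_3$. In the second case the double transposition moves the fixed point, and $S_3$ is maximal in $S_4$. So for every choice $t,d,c$, the group $H=\inpro{t,d,c}$ has $\Psi(H)=S_4$.
\item Hence $HZ(G_{11})=G_{11}$ and $H\lhd G_{11}$. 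By Theorem \ref{Galatticethm}, $H=G_{11}^{\ga}$, with $\ga$ read off from the reflections in $H$.
\item Since $t,d,c\in H\subset G_{15}$, this forces $\ga=(2,2,3)$, i.e.\ $H=G_{15}$.
\end{enumerate}
So $G_{15}$ is in fact nicely generated, and the list in the statement omits it. The paper's proof only says ``it is easily verified'', which does not catch this. The ``precisely'' cannot be proved as stated, and no refinement of your negative-case plan will succeed for $G_{15}$.

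\paragraph{Two other steps need repair.} First, your quotient criterion says slot $j$ suffices ``precisely when'' the remaining one-per-orbit choices generate $N_j$. This is only sufficient, not necessary. In $G_{11}$ with $j=2$ we have $N_2=G_{14}$, and $\inpro{t,c}$ can have $\Psi$-image $S_3$, so it need not equal $G_{14}$. Yet $\inpro{t,r,c}=G_{11}$ for every choice, so read as an equivalence the criterion would exclude $G_{11}$ itself.

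Second, in the positive cases, showing that $H$ is not contained in $G_7$, $G(8,2,2)$ or $G(6,2,2)$ does not give $H=G$. The groups $G_8,G_9,G_{10},G_{12},\ldots,G_{15}$ are proper reflection subgroups of $G_{11}$ lying in none of these. Figure \ref{G11-lattice-maximal} lists subgroups that are maximal reflection groups, not maximal proper reflection subgroups. The correct step is the one used above: if $\Psi(H)$ is the whole collineation group of the maximal reflection group $M\supseteq G$, then $H\lhd M$, and $H=M^{\ga}$ with $\ga$ determined by the reflections of $H$.

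\paragraph{What your route gives once repaired.} With this step, your approach becomes a clean, conceptual alternative to the paper's appeal to Lemma \ref{abelianisationlemma} plus computation.
\begin{itemize}
\item Positive cases: a $4$-cycle and a $3$-cycle generate $S_4$ ($G_{10}$, $G_{11}$). A double transposition and a $3$-cycle generate $A_4$ ($G_6$, $G_7$). A $3$-cycle and a $5$-cycle generate $A_5$ ($G_{18}$, $G_{19}$). The transposition, double transposition and $3$-cycle check gives $G_{15}$.
\item Negative cases come from proper images: $D_4$ for $G_9$ (and always for $G_{13}$), $S_3$ for $G_{14}$ and $G_{21}$, $D_5$ for $G_{17}$, and $C_3$ for $G_5$.
\end{itemize}
Your rank argument for $G_{26}$ and $G_{28}$, and the non-cyclicity argument for the one-orbit groups, are fine.
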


\begin{proof} 
A minimal generating set of reflections for a complex 
reflection group must contain such a set of reflections
	(see Lemma \ref{abelianisationlemma}),
and it is easily verified that all such subsets generate the group.
\end{proof}


\vfil\eject

\begin{table}[H]
\caption{
\label{GjTablerank2}
The reflection types and collineation groups of the primitive complex reflection groups of rank $2$ (with generators).  }
\smallskip
\begin{tabular}{ |  >{$}l<{$} | >{$}l<{$} | >{$}l<{$} | >{$}l<{$} | >{$}l<{$} | >{$}l<{$} | >{$}l<{$} |  >{$}l<{$} | }
\hline
	&&&&&&&\\[-0.3cm]
	\hbox{ST} & \hbox{Order} & \hbox{Reflection type} & |Z(G)| & \hbox{$K_a$} &
	\hbox{Reflections} & G_{\rm coll} & \hbox{Generators} \\[0.1cm]
\hline
  &&&&&&&\\[-0.3cm]
G_4 &  24 & 4 C_3 & 2 & C_{6} & 3^{8} & A_4 & Z,Z^S \hbox{ or } Z^R, Z^{SR} \\ 
G_5 &  72 & 4 C_3, 4 C_3 & 6 & C_{6} & 3^{16} & A_4 & Z, Z^R \\ 
G_6 &  48 & 6 C_2, 4 C_3 & 4 & C_{4}, C_{12}  & 2^{6} 3^{8} & A_4 & R^2, Z \hbox{ or } R^2, Z^R \\ 
G_7 &  144 & 6 C_2, 4 C_3, 4 C_3 & 12 & C_{12} & 2^6 3^{16} & A_4 & R^2, Z, Z^R \\
  &&&&&&&\\[-0.4cm]
G_8 &  96 & 6 C_4 & 4 & C_{4} & 2^6 4^{12} & S_4 & R, R^F \\
G_9 &  192 & 12 C_2, 6 C_4 & 8 & C_{8} & 2^{18} 4^{12} & S_4 & F, R \\
G_{10} & 288 & 6 C_4, 8 C_3 & 12 & C_{12} & 2^{6} 3^{16} 4^{12} & S_4 & R, Z \\
G_{11} & 576 & 12 C_2, 6 C_4, 8 C_3 & 24 & C_{24} & 2^{18} 3^{16} 4^{12} & S_4 & F, R, Z \\
G_{12} & 48 & 12 C_2 & 2 & C_{2} & 2^{12} & S_4 &  F, F^Z, F^{Z^2} \\
G_{13} & 96 & 12 C_2, 6 C_2 & 4 & C_4, C_8 & 2^{18} & S_4 & F, F^Z, R^2 \\
G_{14} & 144 & 12 C_2, 8 C_3 & 6 & C_6 & 2^{12}3^{16} & S_4 & F, Z \\
G_{15} & 288 & 12 C_2, 6C_2, 8 C_3 & 12 & C_{12}, C_{24}, C_{12} & 2^{18}3^{16} & S_4 & F, R^2, Z \\

	&&&&&&&\\[-0.4cm]
G_{16} & 600 & 12C_5 & 10 & C_{10} & 5^{48} & A_5 & M, M^{R^2} \\
G_{17} & 1200 & 30C_2, 12C_5 & 20 & C_{20} & 2^{30}5^{48} & A_5 & R^2, M \\
G_{18} & 1800 & 20C_3, 12C_5 & 30 & C_{30} & 3^{40}5^{48} & A_5 & Z, M \\
G_{19} & 3600 & 30C_2, 20C_3, 12C_5 & 60 & C_{60} & 2^{30}3^{40}5^{48} & A_5 & R^2, Z, M \\
G_{20} & 360 & 20C_3 & 6 & C_6 & 3^{40} & A_5 & Z, Z^M \\
G_{21} & 720 & 30C_2, 20C_3 & 12 & C_{12} & 2^{30}3^{40} & A_5 & R^2, Z^M \\
G_{22} & 240 & 30C_2 & 4 & C_4 & 2^{30} & A_5 & R^2, (R^2)^Z, (R^2)^M \\ [0.1cm]

\hline
\end{tabular}
\end{table}

\vskip -0.4truecm

\begin{table}[H]
\caption{
\label{GjTablerankdge3}
The reflection types of the primitive complex reflection groups of rank $d\ge 3$.
}
\smallskip
\begin{tabular}{ |  >{$}l<{$} | >{$}c<{$} | >{$}l<{$} | >{$}l<{$} | >{$}l<{$} | >{$}l<{$} | >{$}l<{$} | >{$}l<{$} | }
\hline
&&&&&&&\\[-0.3cm]
\hbox{ST} & \hbox{Rank} & \hbox{Order} & \hbox{Reflection type} & |Z(G)| & \hbox{$K_a$} &
	\hbox{Reflections} & |G_{\rm coll}|\, \hbox{($G_{\rm coll}$ simple)} \\[0.1cm]
\hline
&&&&&&&\\[-0.3cm]
G_{23} & 3 & 120 & 15C_2 & 2 & C_2 & 2^{15} & 60\ \hbox{(simple)} \\
G_{24} & 3 & 336 & 21C_2 & 2 & C_2 & 2^{21} & 168\ \hbox{(simple)} \\
G_{25} & 3 & 648 & 12C_3 & 3 & C_6 & 3^{24}  & 216 \\
G_{26} & 3 & 1296 & 9C_2, 12C_3 & 6 & C_6 & 2^9 3^{24} & 216 \\
G_{27} & 3 & 2160 & 45C_2 & 6 & C_6 & 2^{45} & 360\ \hbox{(simple)} \\
G_{28} & 4 & 1152 & 12C_2, 12C_2 & 2 & C_2 & 2^{24} & 576 \\
G_{29} & 4 & 7680 & 40C_2 & 4 & C_4 & 2^{40} & 1920 \\
G_{30} & 4 & 14400 & 60C_2 & 2 & C_2 & 2^{60} & 7200 \\
G_{31} & 4 & 46080 & 60C_2 & 4 & C_4 & 2^{60} & 11520 \\
G_{32} & 4 & 155520 & 40C_3 & 6 & C_6 & 3^{80} & 25920\ \hbox{(simple)} \\
G_{33} & 5 & 51840 & 45C_2 & 2 & C_6 & 2^{45} & 25920\ \hbox{(simple)} \\
G_{34} & 6 & 39191040 & 126C_2 & 6 & C_6 & 2^{126} & 6531840 \\
G_{35} & 6 & 51840 & 36C_2 & 1 & C_2 & 2^{36} & 51840 \\
G_{36} & 7 & 2903040 & 63C_2 & 2 & C_2 & 2^{63} & 1451520\ \hbox{(simple)} \\
G_{37} & 8 & 696729600 & 120C_2 & 2 & C_2 & 2^{120} & 348364800\ \hbox{(simple)} \\ [0.1 cm]
\hline
\end{tabular}
\end{table}

The nontrivial normal reflection subgroups of the primitive reflection 
groups are primitive (Theorem \ref{Galatticethm}), and hence of full rank. 
We now show that the subgroups of irreducible reflection groups 
which are not of full rank are not normal.

A subgroup of a reflection group is {\bf parabolic} if
it is the pointwise stabiliser of some set of points (and hence 
of their linear span). The stabilisers of larger subspaces give smaller
groups, with the identity stabilising the whole space.
It is a classical result of Steinberg \cite{S64} that parabolic subgroups
of complex reflection groups are reflection groups, 
and this extends to the quaternionic reflection groups \cite{BST23}.
Clearly
\begin{itemize}
\item The parabolic subgroups of $G$ have a smaller rank (by definition).
\item The reflection subgroups $R_a$ for a given root (which pointwise 
fix $a^\perp$) are the maximal parabolic subgroups of rank one.
\item The smallest nontrivial subgroups of the $R_a$ are (by Steinberg's theorem)
	the minimal nontrivial parabolic subgroups of $G$.
\end{itemize}

\begin{proposition}
\label{Parabolicnotnormal}
A parabolic subgroup of an (irreducible) reflection group
is not normal unless it is the trivial subgroup.
\end{proposition}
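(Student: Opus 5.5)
Let $P$ be a nontrivial parabolic subgroup of the irreducible reflection group $G$ acting on $V=\Cd$ (or $\Hd$). The plan is to assume $P$ is normal and show that its fixed space is then a proper nonzero $G$-invariant subspace, contradicting irreducibility.

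Write $P=\{g\in G: gx=x \hbox{ for all } x\in U\}$, the pointwise stabiliser of some subspace $U$. Replacing $U$ by the full fixed space
$$ V^P:=\{x\in V: gx=x \hbox{ for all } g\in P\}, $$
we have $U\subset V^P$, and $P$ is still the pointwise stabiliser of $V^P$. Indeed, the pointwise stabiliser of $V^P$ contains $P$ by definition, and it lies inside the stabiliser of $U$, which is $P$. Since $P\ne1$, some element of $P$ is not the identity, so $V^P\ne V$.

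The main step is to show $V^P\ne0$. Here I would use that a nontrivial parabolic subgroup is a reflection group, by Steinberg \cite{S64}, or \cite{BST23} in the quaternionic case. So $P$ contains a reflection $r_{a,\xi}$, and $P=\langle\cup R_{b,P}\rangle$ for its roots $b$. Then $V^P$ is the intersection of the hyperplanes $b^\perp$ over the roots $b$ of $P$, and it contains $U$.

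If $U=0$, then $P=G$, since every element fixes $0$. In that case $P$ is the improper parabolic subgroup, so "nontrivial" should be read as "$P\ne 1,G$". Under that reading $U\ne0$, so $V^P\supset U\ne0$. This degenerate case is the point I expect to need care. I would state explicitly that the proposition concerns proper parabolic subgroups, in line with the earlier remark that nontrivial parabolic subgroups are of lower rank.

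Now suppose $P\lhd G$. For $g\in G$, $h\in P$ and $x\in V^P$ we have $g^{-1}hg\in P$, so $hgx=g(g^{-1}hg)x=gx$. Hence $gx\in V^P$, and $V^P$ is a $G$-invariant subspace with $0\ne V^P\ne V$. This contradicts the irreducibility of $G$, so $P$ is not normal.

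Alternatively, one can argue with the orthogonal complement: $(V^P)^\perp$ is the span of the roots of $P$, and normality makes the set of these roots $G$-stable (by (\ref{raxi-conjugation})). Since $G$ is irreducible, their span would then be all of $V$, forcing $V^P=0$ and $P=G$. This matches the earlier use of Clifford's theorem in Theorem \ref{Galatticethm}.
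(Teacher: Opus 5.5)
Your proof is correct and is essentially the paper's argument. The paper takes a nonzero vector $v$ fixed by $H$ and uses normality to get $h(gv)=gv$ for all $g\in G$, $h\in H$. It then concludes from irreducibility that $H$ fixes $\spam\{gv\}_{g\in G}=V$, so $H=1$; this is your $G$-invariance of $V^P$ phrased differently.

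The appeal to Steinberg's theorem is not needed, since you get $V^P\ne0$ directly from $U\ne0$. Your explicit exclusion of $P=G$ makes visible the paper's tacit assumption that a parabolic subgroup fixes some nonzero vector.
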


\begin{proof}
Let $H$ be a parabolic subgroup of an irreducible 
	reflection group $G$ acting on $V$, so that there exists a nonzero $v\in V$ with
$$ hv=v, \qquad \forall h\in H.  $$ 
Suppose that $H$ is normal in $G$. Then
\begin{align*}
g^{-1}hgv = v, \quad\forall g\in G
& \Implies h(gv)=gv, \quad\forall g\in G \cr
&\Implies \hbox{$H$ pointwise fixes $\spam\{gv\}_{g\in G}$}.
\end{align*}
Since $G$ is irreducible, i.e., $\spam\{gv\}_{g\in G}=V$,
this implies that $H$ pointwise fixes $V$,
and hence is the identity group (since it acts faithfully on $V$).
\end{proof}

We observe that the condition $G$ be a reflection group is not required in
the above argument.
In particular, we have the following.

\begin{example}
The reflection subgroups $R_a$ of an irreducible reflection group $G$ are not normal, 
and hence the reflection
orbits $R_a^G$ contain more than one point.

	If $G$ is reducible, then $R_a^G$ can have a single point, e.g., 
take $G$ to be the reducible subgroups $C_4\times C_4$ and $C_2\times C_2$
	of $G(8,2,2)$ in Example \ref{G(8,2,2)example}.
\end{example}

Before considering the normal subgroups of the imprimitive reflection groups,
we use our examples for primitive groups 
to introduce a geometric description of
the reflection type and its relationship with the abelianisation
of a reflection group.

\section{A geometric description of the reflection type}
\label{domino-sect}

For a complex reflection group $G$ with reflection type
$$ n_1C_{k_1}, \ldots\, n_{m} C_{k_m}, $$
we can visualise the reflection orbit $R_{a_j}^G$, of type $n_jC_{k_j}$,
as a $k_j\times n_j$ matrix (block) 
$$ [R_{a_j},R_{a_j}^{g_2},\ldots, R_{a_j}^{g_{n_j}}], $$
where $g_1=1,g_2,\ldots,g_{n_j}$ is a transversal of $G/R_{a_j}$.
The $i$-th $k_j\times1$ column contains the $k_j$ elements of the
cyclic group $R_{a_j}^{g_i}$, $1\le i\le n_j$, which are
$k_j-1$ reflections and the identity, which we give some fixed ordering
with the identity at the bottom (below the dotted red line), e.g., 
for $G=G_{11}=G^{(2,4,3)}$, of reflection type $12C_2,6C_4,8C_3$, we have
$$ \mat{G=G_{11} &
\begin{tikzpicture}[scale=0.30]
\draw[black, thick] (0,0) rectangle (12,2);
\foreach \x in {0,1,2,3,4,5,6,7,8,9,10,11} { \foreach \y in {0,1} {
  \draw[black, thin] (\x+0,\y+0) rectangle (\x+1,\y+1);
  }}
\draw[red, very thick, dashed] (0,1) -- (12,1);
\end{tikzpicture}
&
\begin{tikzpicture}[scale=0.30]
\draw[black, thick] (0,0) rectangle (6,4);
\foreach \x in {0,1,2,3,4,5} { \foreach \y in {0,1,2,3} {
  \draw[black, thin] (\x+0,\y+0) rectangle (\x+1,\y+1);
  }}
\draw[red, very thick, dashed] (0,1) -- (6,1);
\end{tikzpicture}
&
\begin{tikzpicture}[scale=0.30]
\draw[black, thick] (0,0) rectangle (8,3);
\foreach \x in {0,1,2,3,4,5,6,7} { \foreach \y in {0,1,2} {
  \draw[black, thin] (\x+0,\y+0) rectangle (\x+1,\y+1);
  }}
\draw[red, very thick, dashed] (0,1) -- (8,1);
\end{tikzpicture}
\quad 12C_2,6C_4,8C_3.
} $$
We could remove the squares below the red line, but prefer 
to leave them, so that each column gives all the elements of 
a copy of the group
$C_{k_j}$. We have
\begin{itemize}
\item Each small square above the red line corresponds to a reflection in 
the orbit, with those below being the identity,
e.g, $G_{11}$ has $12\cdot 1+6\cdot 3+8\cdot 2=46$ reflections.
\item The conjugation action of $G$ acts transitively on the columns of
	each block, giving the reflection orbit.
\item Taking the $G$-orbit of a subgroup $\hat R_a$ if $R_a$ corresponds
	to removing rows of the corresponding block (with $G$ continuing to act
		transitively on the subcolumns).
\end{itemize}
Each reflection subgroup $H$ of $G$ corresponds its set of reflections, which we will
indicate by enclosing them in a blue box (if there are reflections
taken from the orbit), with the small boxes corresponding to
individual elements not printed for clarity. Moreover, we will indicate the $H$-orbits
orbits of the reflection subgroups (columns) by sub-boxes if they do not equal
the $G$-orbits, e.g., for a split orbit of a normal reflection subgroup.
For example, for $G^{(2,2,3)}=G_{15}$, 
we have
$$ \mat{G^{(2,2,3)}=G_{15} &
\begin{tikzpicture}[scale=0.30]
\draw[black, thick] (0,0) rectangle (12,2);
\draw[blue, thick] (0,0) rectangle (12,2);
\draw[red, very thick, dashed] (0,1) -- (12,1);
\end{tikzpicture}
&
\begin{tikzpicture}[scale=0.30]
\draw[black, thick] (0,0) rectangle (6,4);
\draw[blue, thick] (0,0) rectangle (6,2);
\draw[red, very thick, dashed] (0,1) -- (6,1);
\end{tikzpicture}
&
\begin{tikzpicture}[scale=0.30]
\draw[black, thick] (0,0) rectangle (8,3);
\draw[blue, thick] (0,0) rectangle (8,3);
\draw[red, very thick, dashed] (0,1) -- (8,1);
\end{tikzpicture}
\quad 12C_2,6C_2,8C_3.
} $$
and for $G^{(1,2,3)}=G_7$, which has a split orbit, we have
$$ \mat{G^{(1,2,3)}=G_7 &
\begin{tikzpicture}[scale=0.30]
\draw[black, thick] (0,0) rectangle (12,2);
\draw[red, very thick, dashed] (0,1) -- (12,1);
\end{tikzpicture}
&
\begin{tikzpicture}[scale=0.30]
\draw[black, thick] (0,0) rectangle (6,4);
\draw[blue, thick] (0,0) rectangle (6,2);
\draw[red, very thick, dashed] (0,1) -- (6,1);
\end{tikzpicture}
&
\begin{tikzpicture}[scale=0.30]
\draw[black, thick] (0,0) rectangle (8,3);
\draw[blue, thick] (0,0) rectangle (4,3);
\draw[blue, thick] (4,0) rectangle (8,3);
\draw[red, very thick, dashed] (0,1) -- (8,1);
\end{tikzpicture}
\quad6C_2,4C_3,4C_3 \ \hbox{(split orbit)}. 
} $$
The normal reflection subgroups of $G$ are characterised by 
\begin{itemize}
\item The union of the blue boxes in each orbit is a single box extending
	across the whole block, i.e., the reflections are obtained by 
		removing rows of the block (i.e., taking a subgroup of $R_{a_j}$).

\item Moreover, any partitioning of the large blue box is into blue boxes
of the same size (length and height) corresponds to the splitting of the orbit.

\end{itemize}

In light of the above, we now consider the orbit structure of the 
reflection subgroups which are not normal. For the maximal reflection
subgroups $G(8,2,2)$ and $G(6,2,2)$ of $G_{11}$ 
(see Figure \ref{G11-lattice-maximal}), which are not normal, we have
$$ \mat{G(8,2,2) &
\begin{tikzpicture}[scale=0.30]
\draw[black, thick] (0,0) rectangle (12,2);
\draw[blue, thick] (0,0) rectangle (4,2);
\draw[red, very thick, dashed] (0,1) -- (12,1);
\end{tikzpicture}
&
\begin{tikzpicture}[scale=0.30]
\draw[black, thick] (0,0) rectangle (6,4);
\draw[blue, thick] (0,0) rectangle (2,4);
\draw[blue, thick] (2,0) rectangle (6,2);
\draw[red, very thick, dashed] (0,1) -- (6,1);
\end{tikzpicture}
&
\begin{tikzpicture}[scale=0.30]
\draw[black, thick] (0,0) rectangle (8,3);
\draw[red, very thick, dashed] (0,1) -- (8,1);
\end{tikzpicture}
\quad 4C_2,2C_4,4C_2 \ \hbox{($3$ conjugates)} 
} $$

$$ \mat{G(6,2,2) &
\begin{tikzpicture}[scale=0.30]
\draw[black, thick] (0,0) rectangle (12,2);
\draw[blue, thick] (0,0) rectangle (3,2);
\draw[blue, thick] (3,0) rectangle (6,2);
\draw[red, very thick, dashed] (0,1) -- (12,1);
\end{tikzpicture}
&
\begin{tikzpicture}[scale=0.30]
\draw[black, thick] (0,0) rectangle (6,4);
\draw[red, very thick, dashed] (0,1) -- (6,1);
\end{tikzpicture}
&
\begin{tikzpicture}[scale=0.30]
\draw[black, thick] (0,0) rectangle (8,3);
\draw[blue, thick] (0,0) rectangle (2,3);
\draw[red, very thick, dashed] (0,1) -- (8,1);
\end{tikzpicture}
\quad 3C_2,3C_2,2C_3 \ \hbox{($4$ conjugates)} 
} $$
Since the conjugation action is transitive on columns (and subcolumns), it
is clear that these groups are not normal. Moreover, since
$|G_{11}|/|G(8,2,2)|=9$, by the orbit size theorem, $G(8,2,2)$ must have
$3$ or $9$ conjugates in $G_{11}$, with the different images of the blue boxes giving 
different conjugates.
It so happens, that the images of the $4C_2$ boxes and of
the $2C_4$ boxes are disjoint, and 
we conclude that there are $3$ conjugates of $G(8,2,2)$. 
These are uniquely determined by
the images of the $4C_2$ reflections inside the original $12C_2$ orbit,
which must partition it (and similarly for the $2C_4$ orbit).
Similarly, $G(6,2,2)$ has $4$ conjugates. This leads
to the following notion.

We will call a subset of a reflection orbit of size $n_j$ a 
{\bf domino} if 
\begin{itemize}
\item It consists of columns $[\hat R_{a_j}^{g_1},\ldots,\hat R_{a_j}^{g_\ell}]$
for some fixed nontrivial subgroup $\hat R_{a_j}$ of $R_{a_j}$.
\item Its $G$-orbit 
	partitions $\hat R_{a_j}^G$, equivalently,
	the $G$-orbit of $[R_{a_j}^{g_1},\ldots,R_{a_j}^{g_\ell}]$ partitions
		$R_{a_j}^G$.
\item The reflection group that it generates contains no additional reflections.
\end{itemize}
The number of conjugates of a reflection subgroup constructed using dominos 
is relatively easy to calculate,
since conjugation permutes the set of dominos, i.e., conjugate dominos have
no overlap (by definition). Every reflection orbit has dominos of 
lengths $n_j$ and $1$, and we have
\begin{itemize}
\item A reflection subgroup is normal (has one conjugate) if and only if its
	reflections consist of dominos of length $n_j$.
\item The (parabolic) reflection groups of rank one obtained by taking a single domino
$[\hat R_{a_j}]$ of length $1$ are not normal, and have $n_j$ conjugates.
\end{itemize}
For $G_{11}$ there are also dominos of other lengths, and these are 
counted 
up to conjugation. As with reflection orbits, 
we will say that an $\ga_j\times\ell$ domino 
$[\hat R_{a_j}^{g_1},\ldots,\hat R_{a_j}^{g_\ell}]$, $\ga_j=|\hat R_{a_j}|$, has
{\bf reflection type} $\ell C_{\ga_j}$, and that it is {\bf unique} if its
$G$-orbit gives the only partition of $\hat R_{a_j}^G$ 
into dominos of that size.

\begin{lemma}
\label{dominolemma}
The complex reflection group $G_{11}$, of type $12C_2, 6C_4, 8C_3$, has dominos of the 
following sizes (reflection types)
\begin{align*} 
12C_2: & \qquad C_2, \quad
2C_2 \ \hbox{($6$ dominos)}, \quad
3C_2 \ \hbox{($4$ dominos)}, \quad
4C_2 \ \hbox{($3$ dominos)}, \quad
12C_2, \cr
6C_4: & \qquad C_4,\, C_2, \quad
2C_4,\, 2C_2, \ \hbox{($3$ dominos)}, \quad
6C_4,\, 6C_2, \cr
8C_3: & \qquad C_3, \quad
2C_3, \ \hbox{($4$ dominos)}, \quad
4C_3, \ \hbox{($2$ dominos)}, \quad
8C_3,
\end{align*}
and these are unique, except for the $3C_2$ dominos of which
there are two (partitions). 
\end{lemma}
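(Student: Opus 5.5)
Via Lemma \ref{collinS4}, I will translate everything into $S_4$ and reduce the question to a combinatorial one. The collineation action of $G_{11}$ on the reflection subgroups for roots factors through $\Psi$, and each root line is determined by its cyclic subgroup $R_a$. The orbits are then identified with $S_4$-sets as follows:
\begin{itemize}
\item The $12C_2$ orbit: $\Psi(F)=(1\,4)$, and the two reflections $\pm$ in a pair share a transposition. More precisely, the $12$ root lines form the $S_4$-set of cosets of the stabiliser of $\inpro{F}$. Its order is $576/(12\cdot 24)\cdot$ (scalars) in the collineation group, i.e.\ a subgroup of order $2$ generated by a transposition.
\item The $6C_4$ orbit: the $6$ lines correspond to the cosets of $\inpro{(1\,2\,3\,4)}$, i.e.\ of $C_4$.
\item The $8C_3$ orbit: the $8$ lines correspond to the cosets of $\inpro{(2\,3\,4)}$, i.e.\ of $C_3$.
\end{itemize}
So the three orbits are $S_4/C_2$, $S_4/C_4$ and $S_4/C_3$.

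A domino of length $\ell$ whose $G$-translates partition the orbit is exactly a block of imprimitivity of the transitive $S_4$-set. Blocks containing a base point correspond to intermediate subgroups $\mathrm{Stab}\subseteq K\subseteq S_4$; the block is the $K$-orbit of the base point, of length $[K:\mathrm{Stab}]$. I will enumerate these subgroups:
\begin{itemize}
\item For $C_2=\inpro{(1\,4)}$, the overgroups are $C_2$, $C_2\times C_2$ (either the one generated by $(1\,4),(2\,3)$, or the one inside $V_4$-extended $D_4$), $S_3$ (one only, since a transposition lies in the $S_3$ fixing the other two points, namely $\mathrm{Sym}\{1,4,k\}$ for $k=2,3$ — two of them), $D_4$ (one) and $S_4$. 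This gives block lengths $1,2,3,4,12$. The two $S_3$'s give two distinct partitions into $3C_2$ blocks, while the other lengths each come from a unique overgroup. I need to check $C_2\times C_2$: the overgroups of order $4$ of $\inpro{(1\,4)}$ are $\inpro{(1\,4),(2\,3)}$ only, since the $V_4$ does not contain transpositions. This yields the $6$ dominos of length $2$, the $4$ of length $3$ and the $3$ of length $4$, matching the counts $12/\ell$.
\item For $C_4$, the overgroups are $D_4$ (unique) and $S_4$, giving lengths $1,2,6$.
\item For $C_3$, the overgroups are $A_4$ and $S_3$ (unique), giving lengths $1,4,2$ and $8$ respectively.
\end{itemize}

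Next I check the third domino condition: the group generated by the reflections in the domino contains no extra reflections. I will verify this using the maximal reflection subgroups already found in Example \ref{reflectsubgroupsG11} and Table \ref{G11maximalTable}:
\begin{itemize}
\item $4C_2$ and $2C_4$ sit inside $G(8,2,2)$.
\item $3C_2$ and $2C_3$ sit inside $G(6,2,2)$.
\item $4C_3$ is the split orbit in $G_5$.
\item $2C_2$ of length $2$ (from $12C_2$) is $C_2\times C_2=\inpro{F,-F}$.
\item $2C_2$ from $R^2$ is $C_2\times C_2=\inpro{R^2,-R^2}$.
\end{itemize}
For each of these I will confirm that the reflection count of the generated group equals the domino's reflection count. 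For the $6C_4$ orbit, taking the subgroup $\hat R_a$ of order $2$ gives the same blocks, with $C_2$ sizes.

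The main obstacle is the second $3C_2$ partition. I need to verify that both $S_3$-orbits on lines genuinely give reflection groups without extra reflections. These are the two non-conjugate $G(3,1,2)$-type configurations, consistent with the two non-conjugate $G(3,1,2)$ found inside $G(6,2,2)$, and I will check directly that the span of three $F$-conjugates in one block contains no reflection from another orbit.
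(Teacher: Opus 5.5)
Your route differs from the paper's, whose proof is a Magma computation. You use Lemma \ref{collinS4} to identify the three orbits of root lines with the transitive $S_4$-sets $S_4/\inpro{(1\,4)}$, $S_4/\inpro{(1\,2\,3\,4)}$ and $S_4/\inpro{(2\,3\,4)}$. You then observe that the partition condition in the definition of a domino says exactly that its set of root lines is a block of imprimitivity, and read the blocks off from the overgroups of the point stabiliser.

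Once the false starts in your first bullet are cleaned up, this is correct. The overgroups of $\inpro{(1\,4)}$ are $\inpro{(1\,4),(2\,3)}$, $\mathrm{Sym}\{1,3,4\}$, $\mathrm{Sym}\{1,2,4\}$, one $D_4$, and $S_4$. There is no block of length $6$ in $12C_2$, nor of length $3$ in $6C_4$, because $A_4$ contains no transposition and no $4$-cycle.

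This explains the counts and why $3C_2$ is the only non-unique case, which Magma does not. The two $3C_2$ partitions are interchanged by the $G_{11}$-equivariant involution $a\mapsto a^\perp$ of the $12C_2$ orbit. It preserves the orbit because $-F=r_{a^\perp,-1}\in G_{11}$ and $\Psi(-F)=\Psi(F)$. It is the nontrivial element of $N(\inpro{(1\,4)})/\inpro{(1\,4)}$, and conjugation by $(2\,3)$ swaps $\mathrm{Sym}\{1,3,4\}$ and $\mathrm{Sym}\{1,2,4\}$. Both partitions also occur as the two $3C_2$ reflection orbits of a single $G(6,2,2)$, so the second partition is not an extra obstacle. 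What the Magma computation gives that your proposal only promises is the third domino condition.

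That third condition is where your write-up goes wrong.
\begin{itemize}
\item A $3C_2$ block generates $G(3,3,2)$, the dihedral group of order $6$, not $G(3,1,2)$. The latter has type $3C_2,2C_3$, so a block generating it would \emph{not} be a domino.
\item Checking for reflections ``from another orbit'' is the wrong test. If two reflections $r_1,r_2$ of a $3C_2$ block had $(r_1r_2)^3=-I$, they would generate a dihedral group of order $12$ containing $-r_1=r_{a_1^\perp,-1}$. That reflection lies in the same $12C_2$ orbit but outside the block.
\item ``Sits inside $G(8,2,2)$'' verifies nothing by itself.
\end{itemize}

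You can avoid the case-by-case check, since for complex reflection groups condition (iii) follows from (i) and (ii). Let $\mathcal B$ be the set of reflections of a block $B$, and let $H=\inpro{\mathcal B}$.
\begin{itemize}
\item If $b\in\mathcal B$ has root line $\ell\in B$, then $b(B)\cap B\ni\ell$, so $b(B)=B$.
\item Since $g\hat R_ag^{-1}$ depends only on the line $g(\CC a)$ (Theorem \ref{RaStabilising}(i)), $\mathcal B$ is closed under conjugation by $H$.
\item Hence $B$ is a union of $H$-orbits of root lines. So $H=\inpro{\mathcal B}$ is the normal reflection subgroup of $H$ whose label is $|\hat R_a|$ on those orbits and $1$ on the others.
\item Uniqueness of the label (\ref{Complexsubgrouporderlabels}), applied to $H$ itself, forces this to be the full label of $H$. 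So every root line of $H$ lies in $B$ and $R_{\ell,H}=\hat R_\ell$.
\item Therefore the reflections of $H$ are exactly $\mathcal B$.
\end{itemize}
Thus every block is a domino, and the lemma follows from Lemma \ref{collinS4} together with your block count.
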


\begin{proof} By direct computation in Magma.
\end{proof}

\begin{example}
Taking a single domino gives 
a reflection subgroup of the same type.
The one-domino subgroups given by the dominos in Lemma \ref{dominolemma} are
\begin{align*}
12C_2: & \qquad C_2, \quad
C_2\times C_2, \quad
G(3,3,2), \quad
	B_2=G(4,4,2), \quad
G_{12}, \cr
6C_4: & \qquad C_4,\, C_2, \quad
C_4\times C_4,\, C_2\times C_2, \quad
	G_8,\, G(4,2,2), \cr
8C_3: & \qquad C_3, \quad
C_3\times C_3, \quad
G_4, \quad
G_5,
\end{align*}
and their number of conjugates is the number dominos in the partition,
i.e., $n_j$ divided by the domino length. Each appears once (up to conjugacy),
except for $G(3,3,2)$ (the domino is not unique) which occurs twice,
see (\ref{list4}).
\end{example}

It turns out that the reflection orbit of $G(8,2,2)$ which is not a domino, but
a disjoint union of two dominos, is an exceptional case.

\begin{theorem} 
\label{domino-orbits-thm}
Only four of the $38$ reflection subgroups of
$G_{11}$ have reflection orbits which do not consist of (single) dominos,
namely $G(8,2,2)$ 
and its subgroups $G(4,1,2)$, $G(8,4,2)$, $G(8,8,2)$. 
These subgroups all 
have a common reflection orbit which is the disjoint union of two
$2C_2$ dominos sitting inside the $6C_4$ reflection orbit of $G_{11}$,
with all the other reflection orbits being dominos,
and 
	they each have three conjugates in $G_{11}$.
\end{theorem}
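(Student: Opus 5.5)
The plan is a case check over the complete list of $38$ reflection subgroups of $G_{11}$ from Example \ref{reflectsubgroupsG11}, organised by the maximal reflection subgroups of Figure \ref{G11-lattice-maximal}. For each subgroup $H$ we need its $H$-orbits of reflection subgroups for its roots, viewed inside the blocks $12C_2,6C_4,8C_3$ of $G_{11}$. We then check whether each such orbit is a domino from the list of Lemma \ref{dominolemma}. By the definition of a domino, this means three things: it is a set of columns $\hat R_{a_j}^{g}$ for a fixed subgroup $\hat R_{a_j}$; its $G_{11}$-orbit partitions $\hat R_{a_j}^{G_{11}}$; and it generates no further reflections. The third condition is automatic here, since an $H$-orbit consists exactly of reflections of $H$.

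\textbf{Normal subgroups and their descendants.} For the $12$ normal subgroups (\ref{list1}), each orbit is either a full row-truncated block, which is a domino of length $n_j$, or a split orbit. Split orbits occur only in $G_7$, $G_5$ and $G(4,2,2)$; they have types $4C_3$ (twice) and $2C_2$ (three times), and Lemma \ref{dominolemma} shows these are dominos. The same bookkeeping handles $G_4,G_6$ of (\ref{list2}) from the lattice of $G_7$ (Figure \ref{G7-lattice}): their orbits are $4C_3$, $6C_2$ and $2C_2$ pieces. It also handles (\ref{list3}) and the rank-one and $C_k\times C_k$ groups of (\ref{list6}), whose orbits are single columns (dominos of length $1$). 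For $G(6,2,2)$ and its seven normal subgroups (\ref{list4}), the picture above shows orbits of types $3C_2$, $3C_2$ and $2C_3$, all of which are dominos. Descending the lattice of $G(6,2,2)$ only shortens rows or splits these into length-$1$ columns or into the non-unique $3C_2$ partitions, and these remain dominos.

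\textbf{The $G(8,2,2)$ family.} Here $G(8,2,2)=\inpro{R,S,F^{Z^2}}$ has orbits $2\inpro{R}$, $4\inpro{S}$ and $4\inpro{F^{Z^2}}$. The $4C_2$ orbits are the unique $4C_2$ dominos. The orbit $2\inpro{R}$ has type $2C_4$, and it is a domino. The key point is that $S=R^2$-type reflections lie in the $6C_4$ block of $G_{11}$ (as $S$ is conjugate to $R^2$), so the $4\inpro{S}$ orbit sits in the $6C_2$ rows of the $6C_4$ block. Its four columns are those not used by the $2\inpro{R}$ domino. By the uniqueness in Lemma \ref{dominolemma} these four columns are the union of the other two $2C_2$ dominos. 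They cannot form a single domino, because the $6C_2$ block admits only dominos of lengths $1,2,6$. We then run through Example \ref{G(8,2,2)example}. The subgroups $G(8,4,2)$, $G(4,1,2)=H^{(4,2,1)}$ and $G(8,8,2)$ keep $4\inpro{S}$ as a single orbit, so they are exceptional. The subgroups $G(4,2,2)=H^{(2,2,1)}$ and $G(4,4,2)=H^{(1,2,1)}$ split it into $2\inpro{S},2\inpro{S^R}$, which are dominos. The remaining subgroups omit $S$ entirely. We must also check that the second $G(4,1,2)=H^{(4,1,2)}=\inpro{R,F^{Z^2}}$ is not exceptional: its orbits $2C_4$ and $4C_2$ are dominos. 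That it differs from $\inpro{R,S}$ up to conjugacy follows from its reflections lying in different $G_{11}$-blocks. So the exceptional subgroups are exactly the four named.

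\textbf{Number of conjugates.} Each of the four exceptional subgroups contains the $4\inpro{S}$ orbit, a union of two of the three $2C_2$ dominos of the $6C_2$ rows. Conjugation permutes these three dominos transitively, so it permutes the complementary pairs. The normaliser therefore has index at most $3$. The index is at least $3$ since the complementary domino is moved. It divides $|G_{11}|/|H|$, and since the conjugates are determined by the choice of pair, there are exactly $3$ conjugates.

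\textbf{Main obstacle.} The delicate step is showing that the $4\inpro{S}$ orbit is genuinely not a single domino. This requires relying on the completeness of the domino list of Lemma \ref{dominolemma}, specifically that no $4C_2$ domino exists in the $6C_2$ rows. It also requires correctly identifying which $G_{11}$-block each $H$-orbit lives in. I would verify the latter directly using the collineation images from Lemma \ref{collinS4}: $\Psi(S)$ is a double transposition, like $R^2$, whereas $\Psi(F)$ is a transposition.
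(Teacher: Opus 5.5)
Your overall route is a sound hand version of what the paper settles by a Magma computation (or a check with the generators), and it identifies the right four subgroups. You read off the orbits of all $38$ subgroups from Examples \ref{reflectsubgroupsG11} and \ref{G(8,2,2)example}, place $S$ in the $6C_4$ block (indeed $FSF^{-1}=R^2$), and isolate $4\inpro{S}$ as the only orbit that is not a single domino.

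\textbf{The gap: the count of conjugates.} Transitivity of $G_{11}$ on the three $2C_2$ dominos only tells you that the stabiliser $M$ of the set $4\inpro{S}$ (equivalently of $\{\CC e_1,\CC e_2\}$) has index $3$. Your observation that anything normalising $H$ must fix its unique $H$-orbit of length $4$ in the $6C_4$ block gives $N_{G_{11}}(H)\subseteq M$, so there are at least three conjugates. The sentence ``the normaliser therefore has index at most $3$'' does not follow from this. The phrase ``the conjugates are determined by the choice of pair'' is precisely the unproved inclusion $M\subseteq N_{G_{11}}(H)$. Divisibility of $[G_{11}:H]$, which is $9,18,18,36$, does not close the gap; for instance it still allows $9$ conjugates of $G(8,2,2)$.

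A short fix is available. $M$ is the set of monomial matrices of $G_{11}$, so $Z(G_{11})\,G(8,2,2)\subseteq M$. Moreover $|Z(G_{11})G(8,2,2)|=24\cdot 64/8=192=576/3=|M|$, where $8=|Z(G(8,2,2))|$, so $M=Z(G_{11})G(8,2,2)$. Each of the four subgroups is normal in $G(8,2,2)$, and scalars act trivially by conjugation, so $M$ normalises each of them. Hence $N_{G_{11}}(H)=M$ and there are exactly three conjugates.

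\textbf{Smaller points.}
Your claim that the third domino condition is ``automatic'' needs more than the fact that an $H$-orbit consists of reflections of $H$. You must also exclude that one $H$-orbit generates reflections lying in another $H$-orbit; this follows from Lemma \ref{abelianisationlemma} applied to $H$, since distinct orbits map onto distinct direct summands of $H_{\rm ab}$.

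The sentence ``The $4C_2$ orbits are the unique $4C_2$ dominos'' is false for $4\inpro{S}$, as you go on to show. It should refer only to $4\inpro{F^{Z^2}}$ in the $12C_2$ block.

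What you call the main obstacle needs no appeal to the completeness of Lemma \ref{dominolemma}. The $G_{11}$-translates of a domino are disjoint sets of equal length covering the block, so a domino's length divides $n_j$, and $4\nmid 6$.

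On the other hand, you take from the paper's pictures and lemma, rather than check, that the orbits of $G(6,2,2)$ and $4\inpro{F^{Z^2}}$ really are dominos (i.e.\ that their translates partition the block). That step sits at the same computational level as the paper's own proof.
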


\begin{proof} By direct computation in Magma, or by using the generators 
of Table \ref{GjTablerank2}.
\end{proof}

\begin{example} The reflection types of the four reflection subgroups of $G_{11}$
in Theorem \ref{domino-orbits-thm}, which have a $4C_2$ orbit consisting of
	a disjoint union of two $2C_2$ (square) dominos.
$$ \mat{
G(8,2,2) &
\begin{tikzpicture}[scale=0.30]
\draw[black, thick] (0,0) rectangle (12,2);
\draw[blue, thick] (0,0) rectangle (4,2);
\draw[red, very thick, dashed] (0,1) -- (12,1);
\end{tikzpicture}
&
\begin{tikzpicture}[scale=0.30]
\draw[black, thick] (0,0) rectangle (6,4);
\draw[blue, thick] (0,0) rectangle (2,4);
\draw[blue, thick] (2,0) rectangle (6,2);
\draw[red, very thick, dashed] (0,1) -- (6,1);
\draw[cyan, thick] (4,0) -- (4,2);
\end{tikzpicture}
&
\begin{tikzpicture}[scale=0.30]
\draw[black, thick] (0,0) rectangle (8,3);
\draw[red, very thick, dashed] (0,1) -- (8,1);
\end{tikzpicture}
&\quad 4C_2,2C_4,4C_2 
\cr
G(4,1,2) &
\begin{tikzpicture}[scale=0.30]
\draw[black, thick] (0,0) rectangle (12,2);
\draw[red, very thick, dashed] (0,1) -- (12,1);
\end{tikzpicture}
&
\begin{tikzpicture}[scale=0.30]
\draw[black, thick] (0,0) rectangle (6,4);
\draw[blue, thick] (0,0) rectangle (2,4);
\draw[blue, thick] (2,0) rectangle (6,2);
\draw[red, very thick, dashed] (0,1) -- (6,1);
\draw[cyan, thick] (4,0) -- (4,2);
\end{tikzpicture}
&
\begin{tikzpicture}[scale=0.30]
\draw[black, thick] (0,0) rectangle (8,3);
\draw[red, very thick, dashed] (0,1) -- (8,1);
\end{tikzpicture}
&\quad 2C_4,4C_2 
\cr
G(8,4,2) &
\begin{tikzpicture}[scale=0.30]
\draw[black, thick] (0,0) rectangle (12,2);
\draw[blue, thick] (0,0) rectangle (4,2);
\draw[red, very thick, dashed] (0,1) -- (12,1);
\end{tikzpicture}
&
\begin{tikzpicture}[scale=0.30]
\draw[black, thick] (0,0) rectangle (6,4);
\draw[blue, thick] (0,0) rectangle (2,2);
\draw[blue, thick] (2,0) rectangle (6,2);
\draw[red, very thick, dashed] (0,1) -- (6,1);
\draw[cyan, thick] (4,0) -- (4,2);
\end{tikzpicture}
&
\begin{tikzpicture}[scale=0.30]
\draw[black, thick] (0,0) rectangle (8,3);
\draw[red, very thick, dashed] (0,1) -- (8,1);
\end{tikzpicture}
&\quad 4C_2,2C_2,4C_2 
\cr
G(8,8,2) &
\begin{tikzpicture}[scale=0.30]
\draw[black, thick] (0,0) rectangle (12,2);
\draw[blue, thick] (0,0) rectangle (4,2);
\draw[red, very thick, dashed] (0,1) -- (12,1);
\end{tikzpicture}
&
\begin{tikzpicture}[scale=0.30]
\draw[black, thick] (0,0) rectangle (6,4);
\draw[blue, thick] (2,0) rectangle (6,2);
\draw[red, very thick, dashed] (0,1) -- (6,1);
\draw[cyan, thick] (4,0) -- (4,2);
\end{tikzpicture}
&
\begin{tikzpicture}[scale=0.30]
\draw[black, thick] (0,0) rectangle (8,3);
\draw[red, very thick, dashed] (0,1) -- (8,1);
\end{tikzpicture}
&\quad 4C_2,4C_2 \hskip5truecm 
} $$
\end{example}

\begin{corollary}
The $G_{11}$-orbit of every reflection orbit of the reflection subgroups of
$G_{11}$ partitions the $G_{11}$ reflection orbit it is 
	contained in, except for the
$4C_2$ reflection orbit of $G(4,1,2)$ and one of the $4C_2$ reflection orbits
of $G(8,2,2)$, $G(8,4,2)$ and $G(8,8,2)$.
\end{corollary}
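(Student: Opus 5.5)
The corollary is essentially a reformulation of Theorem \ref{domino-orbits-thm} in terms of partitions, so I would prove it by unpacking the definition of a domino. I would not attempt any new group-theoretic argument; the content sits in Lemma \ref{dominolemma}, Theorem \ref{domino-orbits-thm} and the enumeration of the $38$ reflection subgroups in Example \ref{reflectsubgroupsG11}.

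\emph{Step 1 (dominos give partitions).} Let $H$ be one of the $38$ reflection subgroups of $G_{11}$ and let $O$ be one of its reflection orbits. Then $O$ consists of columns $\hat R_{a_j}^{h}$, $h\in H$, inside some $G_{11}$-orbit $R_{a_j}^{G_{11}}$. I would observe that all these columns are given by one fixed subgroup $\hat R_{a_j}=R_{a_j,H}$, since $H$-conjugation preserves $R_{a_j,H}$ up to the root line. If $O$ is a single domino, then the second defining property of a domino says exactly that its $G_{11}$-orbit partitions $\hat R_{a_j}^{G_{11}}$, which is the relevant $G_{11}$ reflection orbit. So every domino orbit gives a partition. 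By Theorem \ref{domino-orbits-thm}, this covers every reflection orbit of every reflection subgroup except the common $4C_2$ orbit of $G(8,2,2)$, $G(4,1,2)$, $G(8,4,2)$, $G(8,8,2)$ sitting inside $6C_4$.

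\emph{Step 2 (the exceptional orbit).} This is the main point to check: why the exceptional orbit fails to partition. That orbit is a disjoint union of two $2C_2$ dominos, so it consists of four of the six columns of $\hat R^{G_{11}}$ with $|\hat R|=2$. A set of size $4$ cannot partition a set of size $6$ by translates unless the translates overlap. Its $G_{11}$-images are the three conjugates given by Theorem \ref{domino-orbits-thm}, and these cover the six columns with overlaps, so they do not partition. I would exhibit this concretely: the three $2C_2$ dominos are the $G_{11}$-translates of $\{R^2,(R^2)^S\}$-type columns, and each conjugate omits exactly one of them. This can be checked with the generators of Table \ref{GjTablerank2}, or simply from the count $4\nmid 6$.

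\emph{Step 3 (the other orbits of the four exceptional groups).} The remaining orbits of these four groups are dominos by Theorem \ref{domino-orbits-thm}, so they partition by Step 1. The $12C_2$ and $8C_3$ portions are handled this way. For $G(8,2,2)$ and $G(8,4,2)$, the $2C_4$ or $2C_2$ orbit in the $6C_4$ block is a domino from Lemma \ref{dominolemma} and partitions.

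The statement singles out $G(4,1,2)$ in the form ``the $4C_2$ orbit'', but $G(8,\cdot,2)$ in the form ``one of the $4C_2$ orbits''. This is because the latter groups also have a $4C_2$ orbit inside $12C_2$, which is a domino. That distinction, read off from the pictures after Theorem \ref{domino-orbits-thm}, completes the proof.
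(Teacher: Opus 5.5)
Your proposal is correct and follows the paper's route: the paper gives no separate proof and reads the corollary off Theorem \ref{domino-orbits-thm} (itself established computationally) via the second defining property of a domino, exactly as in your Steps 1 and 3. Your divisibility remark goes slightly beyond what the paper says explicitly. The exceptional $4C_2$ orbit occupies four of the six order-$2$ columns in the $6C_4$ block, so its $G_{11}$-translates cannot be pairwise disjoint because $4\nmid 6$; this makes explicit why that orbit fails to partition, which the paper leaves implicit.
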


The reflection subgroups can be constructed from and understood using domino 
decompositions of the reflection orbits. We will not develop the entire 
theory here, but instead give some indicative examples.

The reflection subgroups are defined by their reflection orbits, and their
occurrences and number of conjugacies by the decomposition into dominos
(which is unique), 
e.g., $G(4,2,2)$ occurs twice as a subgroup of $G_{11}$, with one occurrence 
being normal.

\begin{example} The decompositions of the reflections for 
the two occurrences of the subgroup 
$G(4,2,2)$ of $G_{11}$, of reflection type $2C_2,2C_2,2C_2$, into dominos is as follows. 
$$ \mat{G(4,2,2) &
\begin{tikzpicture}[scale=0.30]
\draw[black, thick] (0,0) rectangle (12,2);
\draw[red, very thick, dashed] (0,1) -- (12,1);
\end{tikzpicture}
&
\begin{tikzpicture}[scale=0.30]
\draw[black, thick] (0,0) rectangle (6,4);
\draw[blue, thick] (0,0) rectangle (2,2);
\draw[blue, thick] (2,0) rectangle (4,2);
\draw[blue, thick] (4,0) rectangle (6,2);
\draw[red, very thick, dashed] (0,1) -- (6,1);
\end{tikzpicture}
&
\begin{tikzpicture}[scale=0.30]
\draw[black, thick] (0,0) rectangle (8,3);
\draw[red, very thick, dashed] (0,1) -- (8,1);
\end{tikzpicture}
&\, 
	\hbox{(normal, split orbit)} \cr 
G(4,2,2) &
\begin{tikzpicture}[scale=0.30]
\draw[black, thick] (0,0) rectangle (12,2);
\draw[blue, thick] (0,0) rectangle (2,2);
\draw[blue, thick] (2,0) rectangle (4,2);
\draw[red, very thick, dashed] (0,1) -- (12,1);
\end{tikzpicture}
&
\begin{tikzpicture}[scale=0.30]
\draw[black, thick] (0,0) rectangle (6,4);
\draw[blue, thick] (0,0) rectangle (2,2);
\draw[red, very thick, dashed] (0,1) -- (6,1);
\end{tikzpicture}
&
\begin{tikzpicture}[scale=0.30]
\draw[black, thick] (0,0) rectangle (8,3);
\draw[red, very thick, dashed] (0,1) -- (8,1);
\end{tikzpicture}
	&\, 
	\hbox{($3$ conjugates)}
} $$
We observe the first copy above is normal and the second is not.
\end{example}

To construct reflection subgroups, we can begin with a domino in some reflection 
orbit and consider the domino structure of those reflections (from the other orbits) 
which stabilise it (this is most often a single domino). 
The stabiliser of a domino is generally not a reflection group,
but is does contain unique reflection subgroup generated by its reflections.

\begin{example}
For the dominos of length $1$ given by the reflections for a single root
on the reflection orbits $12C_2,6C_4,8C_3$ of $G_{11}$, the stabiliser groups
have orders $48,96,72$, and contain $2,6,4$ reflections on the given orbit (those
for the root and its conjuguate). These stablising reflections give the
$2C_m$ dominos (of length $2$), which correspond to the reflection subgroups $C_m\times C_m$.
\end{example}

\begin{example} The dominos of length $n_j$ are fixed by all reflections, and so each 
	union of them gives a different group, i.e., the normal reflection subgroups.
\end{example}

\begin{example} There is just one reflection subgroup of $G_{11}$ given by a single domino 
which has two conjugates, 
namely the $4C_3$ domino which gives $G_4$.
Hence, any other reflection subgroups with two conjugates
would be given by the $4C_3$ domino together with dominos of maximum length from
the other orbits which stabilise it. There is one
such case, namely the $4C_3$ domino and the $6C_2$ domino, which gives $G_6$. 
$$ \mat{G_6 &
\begin{tikzpicture}[scale=0.30]
\draw[black, thick] (0,0) rectangle (12,2);
\draw[red, very thick, dashed] (0,1) -- (12,1);
\end{tikzpicture}
&
\begin{tikzpicture}[scale=0.30]
\draw[black, thick] (0,0) rectangle (6,4);
\draw[blue, thick] (0,0) rectangle (6,2);
\draw[red, very thick, dashed] (0,1) -- (6,1);
\end{tikzpicture}
&
\begin{tikzpicture}[scale=0.30]
\draw[black, thick] (0,0) rectangle (8,3);
\draw[red, very thick, dashed] (0,1) -- (8,1);
\draw[blue, thick] (0,0) rectangle (4,3);
\end{tikzpicture}
&\, 
        \hbox{($2$ conjugates)}
} $$
Here the reflections which stabilise the $4C_3$ domino 
are the $8C_3$ reflection orbit it belongs to and the $6C_2$ domino
inside the $6C_4$ orbit, a union of dominos of maximum length, which 
therefore generate a normal reflection subgroup.
\end{example}

\vfil\eject
\section{The abelianisation of a reflection group and its reflection type}

Here we consider the close 
relationship between the reflection type of a complex
reflection group $G$ and its abelianisation $G_{\rm ab}$ (\cite{B11}
has a very general development including the relationship with braid groups
and Stanley-Springer’s Theorem).
As a consequence, we explain why the normal reflection subgroups $G^\ga$ of $G$ 
are all different (Example \ref{Galphareflects}).

Let $G'=[G,G]$ be the (derived) commutator subgroup of a finite group $G$.
Then the {\bf abelianisation} of $G$ is the surjective homomorphism (or its image)
$$ G\to {G\over [G,G]}, $$
where $G_{\rm ab}:={G\over [G,G]}$ 
is the largest quotient of $G$ by a normal subgroup which is abelian.

\begin{lemma} 
\label{abelianisationlemma}
If $G=G^{(k_1,\ldots,k_m)}$ is complex reflection group, then its abelianisation
maps reflections to reflections of the same order, with each reflection 
	orbit $n_j C_{k_j}$ mapping onto the internal direct summand
	$C_{k_j}$ of 
	$$ G_{\rm ab}={G\over [G,G]} = C_{k_1}\times\cdots\times C_{k_m}. $$
i.e., the abelianisation of $G^{(k_1,\ldots,k_m)}$ is $C_{k_1}\times\cdots\times C_{k_m}$.
As a consequence,
a generating set of reflections for $G$ must contain at least one reflection 
from each reflection orbit,
	and $[G,G]$ is a reflection free normal subgroup of $G$.
\end{lemma}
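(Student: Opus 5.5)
The plan is to build an explicit surjective homomorphism $\phi:G\to C_{k_1}\times\cdots\times C_{k_m}$ with abelian image. This gives $|G_{\rm ab}|\ge\prod_j k_j$. We then bound $|G_{\rm ab}|$ from above by the same quantity, which forces equality.

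\emph{Upper bound.} Let $\pi:G\to G_{\rm ab}$ be the abelianisation map. Conjugate elements have the same image under $\pi$. Hence every reflection in the orbit $R_{a_j}^G$ maps into $\pi(R_{a_j})$, which is a cyclic group of order dividing $k_j$. Since $G$ is generated by its reflections, $G_{\rm ab}$ is generated by the subgroups $\pi(R_{a_1}),\ldots,\pi(R_{a_m})$. It is therefore a quotient of $C_{k_1}\times\cdots\times C_{k_m}$, and $|G_{\rm ab}|\le\prod_j k_j$.

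\emph{Lower bound: constructing characters.} For each $j$ we need a linear character $\chi_j:G\to\CC^*$ with two properties. First, $\chi_j$ restricted to $R_{a_j}$ is faithful, i.e.\ $\chi_j(r_{a_j,\xi})=\xi^{\pm1}$. Second, $\chi_j$ is trivial on all reflections in the other orbits $R_{a_\ell}^G$, $\ell\ne j$.

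Set $N_j$ to be the normal reflection subgroup generated by all reflections outside $R_{a_j}^G$, which is normal by Lemma~\ref{normalrefslatticelemma}. We want $G/N_j\cong C_{k_j}$ with $R_{a_j}$ mapping isomorphically onto it. The quotient $G/N_j$ is generated by the images of the conjugates of $R_{a_j}$. It is therefore enough to show that $G/N_j$ is abelian and that $R_{a_j}\cap N_j=1$.

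This is the main obstacle. We would first try the determinant: $\det r_{a,\xi}=\xi$. Its restriction to $R_{a_j}$ is faithful, but it need not vanish on the other orbits. We would handle that by taking products of powers of $\det$ with characters that separate orbits. A cleaner route is Coxeter's presentation of $G$ (as in \cite{C76}, \cite{B11}): in a presentation by generating reflections, all relations other than the order relations $s^{k}=1$ are homogeneous braid relations between generators. Such relations equate conjugate generators, so they only identify generators lying in the same orbit. Hence abelianising the presentation yields exactly $\prod_j C_{k_j}$, with one factor per orbit. If we avoid presentations, the alternative is to invoke the result of \cite{BBR02} that $G/N_j$ is a reflection group generated by the image of $R_{a_j}$. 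It is then a rank-one group, i.e.\ cyclic, and faithfulness on $R_{a_j}$ follows from the uniqueness of labels in Theorem~\ref{Galatticethm}: $N_j$ contains no reflection of $R_{a_j}^G$.

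Given the characters $\chi_j$, the map $\phi=(\chi_1,\ldots,\chi_m)$ is surjective onto $\prod_j C_{k_j}$, since $\phi(R_{a_j})$ is the $j$-th factor. Combined with the upper bound this gives $G_{\rm ab}\cong\prod_j C_{k_j}$. Reflections of order $b$ in the $j$-th orbit then map to elements of order $b$ in the $j$-th summand.

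\emph{Consequences.} Suppose a set of reflections generates $G$ but misses the orbit $R_{a_j}^G$. Then its image lies in $\prod_{\ell\ne j}C_{k_\ell}$, contradicting surjectivity of $\pi$. Every reflection has nontrivial image under $\pi$, so $[G,G]=\ker\pi$ contains no reflections.
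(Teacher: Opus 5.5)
Your upper bound and the consequences are correct, and the upper bound is essentially the paper's own argument. The paper then asserts equality using only $\det$. That gives the orders $k_j$, but it cannot separate orbits: it is $-1$ on both reflection classes of $G(2,1,2)$. So you were right to single out the lower bound as the crux, and the paper's proof does not fill it either.

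However, your reduction of the lower bound aims at a false statement. With $N_j=\inpro{\cup_{\ell\ne j}R_{a_\ell}^G}$, the quotient $G/N_j$ is in general \emph{not} abelian. The paper itself gives three counterexamples:
$G_{13}/G(4,2,2)\cong G(1,1,3)$ and $G_{28}/G(2,2,4)\cong S_3$, where $G(4,2,2)$ and $G(2,2,4)$ are generated by the other reflection orbit; and $G_{26}/G(3,3,3)\cong G_4$, where $G(3,3,3)$ is generated by the $9C_2$ orbit.
\begin{itemize}
\item Your third route therefore breaks at ``it is then a rank-one group, i.e.\ cyclic''. $G/N_j$ is generated by the images of all conjugates of $R_{a_j}$, not of $R_{a_j}$ alone, and $G_4$ is a primitive rank-$2$ example of such a quotient.
\item Your second route is circular. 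Abelianising a homogeneous relation only equates two exponent vectors of equal total degree; it can force $t=u$ (from $tust=ustu$) or $2s=t+u$. Nothing you say shows such identifications occur only within an orbit, and that is exactly the independence to be proved. It is true, via the computation of the abelianised braid group in \cite{BMR98} or a case check, but not by the reasoning you give.
\item Your first route presupposes the separating characters it is meant to produce.
\end{itemize}

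The missing idea is a direct construction of your $\chi_j$ that bypasses $N_j$.
\begin{itemize}
\item Let $\mathcal A_j$ be the $G$-orbit of the hyperplane $a_j^\perp$. For $H=a^\perp\in\mathcal A_j$ put $\alpha_H(x)=a^*x$, and set $J_j=\prod_{H\in\mathcal A_j}\alpha_H$.
\item Since $G$ permutes $\mathcal A_j$, we have $J_j\circ g^{-1}=\chi_j(g)J_j$ for a linear character $\chi_j$.
\item Let $r$ be a reflection whose hyperplane $H_0$ is not in $\mathcal A_j$. Restrict the identity to $H_0$: $r$ fixes $H_0$ pointwise, and $J_j|_{H_0}\ne0$ because no factor vanishes on $H_0$. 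Hence $\chi_j(r)=1$.
\item If instead $H_0=a^\perp\in\mathcal A_j$, apply the same restriction argument to $J_j/\alpha_{H_0}$, and use $\alpha_{H_0}\circ r_{a,\xi}^{-1}=\overline{\xi}\,\alpha_{H_0}$. This gives $\chi_j(r_{a,\xi})=\overline{\xi}$, which is faithful on $R_a$.
\end{itemize}
Then $(\chi_1,\ldots,\chi_m)$ maps $G$ onto $C_{k_1}\times\cdots\times C_{k_m}$, and your argument closes.

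If you prefer to keep $N_j$, do not claim $G/N_j$ is cyclic. Instead compose $G\to G/N_j$ with the determinant of the reflection representation of $G/N_j$ from \cite{BBR02}, using $R_{a_j}\cap N_j=1$.
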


\begin{proof}
Consider the group homomorphism (linear character) given by taking the determinant
$$ \det : G\to\CC^*, $$ 
which maps reflections to elements of the same order, which are not in the kernel of $\det$.
We observe that $\det(G)$ is abelian, and hence $\det|_G$ factors through $G_{\rm ab}$.

Since the abelianisation maps the conjugacy class of an element to a single element, 
the image of a sequence of reflections $r_j\in n_j C_{k_j}$ of order $k_j$, gives
	commuting elements of orders $k_1,\ldots,k_m$ which generate ${G\over [G,G]}$,
	which is therefore equal to $C_{k_1}\times\cdots\times C_{k_m}$.

By taking the natural action of $C_{k_1}\times\cdots\times C_{k_m}$ on $\CC^m$, the 
images of the reflections can be viewed as reflections.
Indeed, if $G=C_{k_1}\times\cdots\times C_{k_m}$ then $G=G^{(k_1,\ldots,k_m)}$.
\end{proof}

\begin{example}
\label{Galphareflects}
For a complex reflection group $G=G^{(k_1,\ldots,k_m)}$, 
the image of the generating reflections for the normal reflection subgroup 
$N=G^{(\ga)}=G^{(\ga_1,\ldots,\ga_m)}$ of (\ref{Complexsubgrouporderlabels})
under the abelianisation of $G$ gives the subgroup 
$C_{\ga_1}\times\cdots\times C_{\ga_m}$ of $G_{\rm ab}=C_{k_1}\times\cdots\times C_{k_m}$, 
so that different choices of $\ga$ give different normal subgroups $G^{\ga}$.

	The reflections in $G^{\ga}$ are precisely $((R_{a_j})^{k_j/\ga_j})^G$,
and so the subgroup $C_{\ga_1}\times\cdots\times C_{\ga_m}$ of $G_{\rm ab}$ above
is the abelianisation of $G^\ga$ if and only if $G^\ga$ has no split orbits, i.e.,
there is equality in 
$$ |(G^\ga)_{\rm ab}| \ge \ga_1\ga_2\cdots\ga_m. $$

\end{example}

Thus we may add another equivalence to Theorem \ref{splitorbitcollineation}.

\begin{corollary}
\label{abeliansplitorbitscor}
A normal reflection subgroup $N=G^{\ga}$ of a complex reflection group $G=G^{(k_1,\ldots,k_m)}$ 
is collineation preserving (has no split orbits) if and only of
\begin{equation}
\label{abelianisationcdn}
|(G^\ga)_{\rm ab}| = \ga_1\ga_2\cdots\ga_m.
\end{equation}
\end{corollary}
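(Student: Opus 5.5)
We combine Theorem \ref{splitorbitcollineation}(i), which says that $N=G^\ga$ is collineation preserving if and only if it has no split orbits, with Lemma \ref{abelianisationlemma} applied to $N$ itself, which is a complex reflection group. It then suffices to show that $N$ has no split orbits if and only if $|(G^\ga)_{\rm ab}|=\ga_1\cdots\ga_m$.

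First, I would identify the reflection orbits of $N$. By Example \ref{Galphareflects}, the reflections of $N$ are precisely $\cup_j\hat R_{a_j}^G$, where $\hat R_{a_j}=(R_{a_j})^{k_j/\ga_j}$ is cyclic of order $\ga_j$. Each $G$-orbit $\hat R_{a_j}^G$ with $\ga_j>1$ is a union of $N$-orbits, say $s_j\ge1$ of them. Every root line in $\hat R_{a_j}^G$ has the same reflection subgroup $\hat R_{a_j}^g$ in $N$. This holds because $R_{a,N}\subset R_{a,G}$ consists of the reflections of $N$ with root $a$, and these are exactly the elements of order dividing $\ga_j$ in the cyclic group $R_{ga_j}$. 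So each $N$-orbit has type $n'C_{\ga_j}$. Lemma \ref{abelianisationlemma} then gives
$$ N_{\rm ab}\cong \prod_j (C_{\ga_j})^{s_j}, \qquad |N_{\rm ab}|=\prod_j \ga_j^{s_j}, $$
where the product runs over the $j$ with $\ga_j>1$. Factors with $\ga_j=1$ contribute $1$ on both sides.

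The equivalence then follows by elementary counting. We have $|N_{\rm ab}|\ge\ga_1\cdots\ga_m$, with equality if and only if $s_j=1$ for every $j$ with $\ga_j>1$. That condition says no orbit is split, i.e.\ $\hat R_{a_j}^N=\hat R_{a_j}^G$ for all $j$, which is (\ref{splitorbitdef}) failing. Invoking Theorem \ref{splitorbitcollineation}(i) completes the proof.

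The main point needing care is the claim that the $N$-reflection subgroup for each root in the orbit is exactly $\hat R_{a_j}^g$ of order $\ga_j$, and not something larger. This is what lets us read off the orbit type $C_{\ga_j}$ used in Lemma \ref{abelianisationlemma}. It rests on the uniqueness of labels: the reflections of $G^\ga$ are exactly $\cup_j\hat R_{a_j}^G$. That follows from the cited fact that every reflection of $N$ is $N$-conjugate to a power of one of its generating reflections, and the generators lie in the conjugation-closed set $\cup_j\hat R_{a_j}^G$.
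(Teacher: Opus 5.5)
Your argument for ``no split orbits $\iff |(G^\ga)_{\rm ab}|=\ga_1\cdots\ga_m$'' is the paper's own. You apply Lemma~\ref{abelianisationlemma} to $N$ itself, whose reflection orbits are the $\ell_j$ pieces into which each $\hat R_{a_j}^G$ splits. This gives $(G^\ga)_{\rm ab}\cong\prod_j C_{\ga_j}^{\ell_j}$, which you compare with $\prod_j\ga_j$. That part is fine.

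The gap is the last step, where you invoke Theorem~\ref{splitorbitcollineation}(i) to pass from ``no split orbits'' to ``collineation preserving''. The proof of that theorem only shows that a split orbit forces the collineation group to shrink, i.e.\ collineation preserving $\Rightarrow$ no split orbits. The converse is false, so ``(\ref{abelianisationcdn}) $\Rightarrow$ collineation preserving'' cannot be obtained this way, and in fact it fails.

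For example, take $G=G(2,1,4)=W(B_4)$, of type $12C_2,4C_2$, and $N=G^{(2,1)}=G(2,2,4)=W(D_4)$. The $12C_2$ orbit remains a single $N$-orbit, so nothing splits, and $|N_{\rm ab}|=2=\ga_1\ga_2$. But both groups have centre $\{\pm I\}$ and $|G:N|=2$, so $|N_{\rm coll}|=96\ne 192=|G_{\rm coll}|$.

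More generally, take $G(m,q,d)\lhd G(m,p,d)$ with $d\ge3$, $p\divides q$, $q\divides m$ and $\gcd(p,d)\ne\gcd(q,d)$. It has no split orbits by Lemma~\ref{G(m,p,d)-orbits}, yet it is not collineation preserving by (\ref{Gmpdcollinorder}). The paper itself records the latter fact in the proof of Theorem~\ref{imprimitivemaximal}. The subgroup $N=1$ is also a trivial counterexample.

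The paper's proof has the same defect, since it silently identifies the two notions. What your argument actually establishes is: no split orbits $\iff$ (\ref{abelianisationcdn}), and collineation preserving $\Rightarrow$ (\ref{abelianisationcdn}). The converse of the second implication does not hold without an extra hypothesis.
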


\begin{proof}
We have observed in Example \ref{Galphareflects}, 
that $\hat R_{a_j}^G$ of (\ref{Complexsubgrouporderlabels}) is split orbit
if and only if its image under the abelianisation of $G$ is not cyclic, 
	i.e., the image is $C_{\ga_j}^{\ell_j}$, where $\ell_j$ is the number of $N$-orbits
	that $\hat R_{a_j}^G$ splits into.
Indeed,
$$ (G^\ga)_{\rm ab} = C_{\ga_1}^{\ell_1}\times\cdots\times C_{\ga_m}^{\ell_m}
\Implies |(G^\ga)_{\rm ab}|={\ga_1}^{\ell_1}{\ga_2}^{\ell_2}\cdots{\ga_m}^{\ell_m}
\ge \ga_1\ga_2\cdots\ga_m, $$
where there is equality in $|(G^\ga)_{\rm ab}| \ge \ga_1\ga_2\cdots\ga_m$
above if and only if $\ell_1=\ldots\ell_m=1$, i.e., 
there is no split orbit.
\end{proof}

As an example of the mechanics of Corollary \ref{abeliansplitorbitscor}, we have
$$ G_7=G_{11}^\ga=G_{11}^{(1,2,3)}\lhd G_{11}=G_{11}^{(2,4,3)},\qquad
G_7=G_7^{(2,3,3)}, $$ 
so that
$$ (G_7)_{\rm ab}= C_2\times C_3\times C_3 \ne C_2\times C_3 
=C_{\ga_1}\times\cdots\times C_{\ga_m}, $$
which gives
$$|(G_7)_{\rm ab}|=18>6=\ga_1\ga_2\cdots\ga_m, $$
and hence $G_7$ is not a collineation preserving normal subgroup of $G_{11}$.

\begin{example}
Let $G=G^{(k_1,\ldots,k_m)}$ be a complex reflection group.
If $G$ has more than one reflection orbit, i.e., $m\ge2$, then
the intersection of the reflection subgroups generated by each orbit, i.e.,
\begin{equation}
\label{Hgroup}
H=H_G:=G^{(k_1,1,\ldots,1)}\cap G^{(1,k_2,1,\ldots,1)} \cap \cdots \cap G^{(1,\ldots,1,k_m)},
\end{equation}
is a reflection free normal subgroup of $[G,G]$ and of $G$.
This follows from the fact the abelianisation of $G$ maps each of the
above reflection subgroups to the corresponding internal direct summand 
of $G_{\rm ab}=C_{k_1}\times\cdots\times C_{k_m}$, and hence to $1$, 
so that $H$ is contained in the kernel of the abelianisation map,
which is $[G,G]$. Moreover, $H$ is an intersection of normal subgroups of $G$,
and hence is normal in $G$ (and therefore in $[G,G]$).

Typically, $H$ is not the identity for $G$ nonabelian, 
thereby giving  
an example where the intersection
of normal reflection subgroups is not a reflection group, e.g.,
for $G_{11}$, 
the subgroups generated by the orbits are
$$ G_{12}=\inpro{F^{G_{11}}} = \inpro{F,F^Z,F^{Z^2}},\quad
G_8=\inpro{R^{G_{11}}} = \inpro{R,R^F},\quad
G_5=\inpro{Z^{G_{11}}} = \inpro{Z,Z^R}, $$
and the intersection of any two of them gives $H$, which has order $24$.
It is easily verified that $H$ is a collineation preserving subgroup 
of $G_7$ ($22$ reflections, $0$ hidden reflections), 
which has $22$ hidden reflections (and $0$ reflections),
with $H=[G_{11},G_{11}]$, and hence
	$$ {G_{11}\over H}=C_2\times C_4\times C_3. $$

It may be that $H$ is a proper subgroup of $[G,G]$, thereby giving a 
nonabelian quotient, e.g., for the primitive reflection groups 
$G=G_{13},G_{15},G_{26},G_{28}$, 
we have  
\begin{align*}
G_{13}: & \qquad {G/ H}\cong G(6,6,2), \quad {G/ [G,G]} \cong C_2\times C_2, \cr
G_{15}: & \qquad {G/ H}\cong G(6,2,2), \qquad {G/ [G,G]} \cong C_2\times C_2\times C_3, \cr
G_{26}: & \qquad {G/ H}\cong C_2\times G_4, \qquad {G/ [G,G]} \cong C_2\times C_3, \cr
G_{28}: & \qquad {G/ H}\cong C_2\times G(3,1,2),\qquad {G/ [G,G]} \cong C_2\times C_2.
\end{align*}
The quotients $G/H$ above were identified with reflection groups as abstract groups,
which suggests the normal subgroups $H$ may be examples of the 
``bon sous-groupes distingu\'es'' of \cite{BBR02} 
discussed in Section \ref{quotientsSection}.

Interestingly, there is a collineation preserving reflection-free 
normal subgroup of $G_{11}$ of order $48$ (with $46$ hidden reflections),
which contains the group $H$ of (\ref{Hgroup}).
We have no pursued such ``reflection-free reflection groups''
	(though there are other cases, e.g., for $G_{19}$ and $G_{33}$).
\end{example}

\begin{figure}[h]
\begin{center}
\begin{tikzpicture}
    \matrix (A) [matrix of nodes, row sep=1.0cm, column sep = -2.6 cm]
    {
	    &&& $(G_{11})_{\rm ab}=G_{\rm ab}^{(2,4,3)}=C_2\times C_4\times C_3$ &&& \\
	& $G_{\rm ab}^{(2,2,3)}=C_2\times C_2\times C_3$ && $G_{\rm ab}^{(2,4,1)}=C_2\times C_4$ && \hskip 1truecm $G_{\rm ab}^{(1,4,3)}=C_4\times C_3$ & \\
	$G_{\rm ab}^{(2,1,3)}=C_2\times C_3$ && \hskip 1truecm $G_{\rm ab}^{(2,2,1)}=C_2\times C_2$ && $G_{\rm ab}^{(1,4,1)}=C_4$ && \greybox{$G_{\rm ab}^{(1,2,3)}=C_2\times C_3\times C_3$} \\
& $G_{\rm ab}^{(2,1,1)}=C_2$ && \hskip-1truecm \greybox{ $G_{\rm ab}^{(1,2,1)}=C_2\times C_2\times C_2$ } && \hskip 1.5truecm \greybox{ $G_{\rm ab}^{(1,1,3)}=C_3\times C_3$ } & \\
	&& \greybox{$G_{\rm}^{(1,1,1)} =1$} &&&& \\
    };
    \draw (A-1-4)--(A-2-2); \draw (A-1-4)--(A-2-4);
    \draw (A-1-4)--(A-2-6); \draw (A-2-2)--(A-3-1);
    \draw (A-2-2)--(A-3-3); \draw (A-2-4)--(A-3-3);
    \draw (A-2-4)--(A-3-5); \draw (A-2-6)--(A-3-5);
    \draw (A-2-6)--(A-3-7); \draw (A-2-2)--(A-3-7);
    \draw (A-3-1)--(A-4-2); \draw (A-3-3)--(A-4-2); \draw (A-3-3)--(A-4-4); \draw (A-3-5)--(A-4-4); \draw (A-4-2)--(A-5-3);
    \draw (A-4-4)--(A-5-3);
    \draw (A-3-7)--(A-4-4);
    \draw (A-3-1)--(A-4-6);
    \draw (A-3-7)--(A-4-6);
    \draw (A-4-6)--(A-5-3);
\end{tikzpicture}
\end{center}
\vskip-0.5truecm
\caption{\label{G11-lattice-abelianisation}
The abelianisations of the normal reflection subgroups of $G_{11}$ placed 
in the lattice of Figure \ref{G11-lattice}. Note the subgroups $G^\ga\ne1$
which are not collineation preserving (i.e., with split orbits) in grey,
which are characterised by the inequality $|G^\ga_{\rm ab}|>\ga_1\ga_2\ga_3$.
}
\end{figure}

For the quaternionic reflection groups, 
the Lemma \ref{abelianisationlemma} does not hold.
Indeed, the abelianisation may be trivial.

\begin{example}
The primitive quaternion reflection groups $G=O_1,O_2,P_2,R,S_3$ of
\cite{C80}
have $G_{\rm ab}=1$ (see Table \ref{PrimitiveQuatOrbits}).
Groups such as these which equal their commutator, i.e., $G=[G,G]$, 
and hence have no nontrivial abelian quotients,
are said to be {\bf perfect}. The given reflection groups are not simple,
though the collineation groups for $O_1,O_2,R$ are. 
\end{example}

\section{The 
imprimitive complex reflection groups}

We now seek to identify the imprimitive complex reflection groups which are maximal
reflection groups, and their normal reflection subgroups 
(which are imprimitive, but not necessarily irreducible).
We first determine the reflection type of 
$$ G(m,p,d), \qquad p\divides m, $$
which acts on $\CC^d$. 
Let $\go:=e^{2\pi i\over m}$, a primitive $m$-th root of unity.
We recall that the 
the reflections in $G(m,p,d)$ have two types, i.e.,
those with root $e_j$, the $j$-th standard basis vector,
which gives the diagonal reflections
\begin{equation}
\label{diagonalreflections}
r_{e_j,\xi}=\diag(1,\ldots,1,\xi,1,\ldots,1), \qquad \xi^{m\over p}=1,
\end{equation}
where $\xi$ is a power of $\go^p$, i.e., is an $({m\over p})$-th root of unity,
and those of order $2$ with roots 
$\zeta e_j-e_k$, $\zeta^m=1$,
in the ${1\over2}d(d-1)$ subspaces $\spam\{e_j,e_k\}$, $j\ne k$, 
for which
(\ref{raxidefn}) gives
\begin{equation}
\label{offdiagonalreflections}
r_{\zeta e_j-e_k,-1}\Big|_{\spam\{e_j,e_k\}}
=\pmat{1&0\cr 0&1}-{2\over2}\pmat{\zeta\cr-1}\pmat{\overline{\zeta}&-1}
=\pmat{0&\zeta\cr \overline{\zeta}&1}, \qquad \zeta^m=1, 
\end{equation}
i.e., $\zeta$ is an $m$-th root of unity.

\begin{lemma}
\label{G(m,p,d)-orbits}
The rank $2$ (imprimitive) reflection group $G(m,p,2)$ 
can have one, two or three orbits of reflections, as follows
\begin{align*}
m=p: & \qquad mC_2 \quad\hbox{\rm ($p$ odd)}, \qquad\qquad\
{m\over2}C_2, {m\over2}C_2 \quad\hbox{\rm ($p$ even)}, \cr
m\ne p: & \qquad mC_2, 2C_{m\over p} 
\quad\hbox{\rm ($p$ odd)}, \qquad {m\over2}C_2, {m\over2}C_2, 2C_{m\over p} 
	\quad\hbox{\rm ($p$ even)},
\end{align*}
and the rank $d\ge3$ reflection group $G(m,p,d)$ 
can have one or two orbits of reflections, as follows
\begin{align*}
        m=p: & \qquad {1\over2}d(d-1)m C_2, \cr
m\ne p: & \qquad {1\over2}d(d-1)mC_2, dC_{m\over p}.
\end{align*}
In summary, we have
\begin{equation}
\label{G(n,p,d)ab}
G(m,p,d)_{\rm ab} = C_2\times C_{\gcd(2,p)} \times C_{m\over p},
\end{equation}
\end{lemma}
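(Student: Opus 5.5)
Fix the generators of $G=G(m,p,d)$: the diagonal matrices $\diag(\zeta_1,\ldots,\zeta_d)$ with $\zeta_j^m=1$ and $(\zeta_1\cdots\zeta_d)^{m/p}=1$, together with the permutation matrices $S_d$. The plan is to work out the conjugation action on the root lines, handling the two kinds of reflections of (\ref{diagonalreflections}) and (\ref{offdiagonalreflections}) separately. Lemma \ref{abelianisationlemma} then turns the resulting reflection type into the abelianisation (\ref{G(n,p,d)ab}).

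\emph{Diagonal reflections.} These exist only when $m\ne p$. Their root lines are $\CC e_j$, and since $R_{e_j}=\{\diag(1,\ldots,\xi,\ldots,1):\xi^{m/p}=1\}\cong C_{m/p}$, they form a single orbit of type $dC_{m/p}$. Two facts are needed. First, $S_d$ permutes the $e_j$ transitively. Second, for $d\ge2$ no diagonal reflection is conjugate to an off-diagonal one: the root lines $\CC e_j$ are permuted among themselves by the monomial group, and the off-diagonal reflections have order $2$ with root $\zeta e_j-e_k$, which has two nonzero coordinates.

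\emph{Off-diagonal reflections.} The root lines are $\CC(\zeta e_j-e_k)$ with $\zeta^m=1$ and $j<k$, giving $\frac12 d(d-1)m$ lines, each with $R_a\cong C_2$. The permutations move the pair $\{j,k\}$ transitively. It is then enough to find the orbit of $\zeta$ within a fixed pair $\{1,2\}$. Conjugating by $\diag(\gl_1,\ldots,\gl_d)$ sends $\zeta e_1-e_2$ to a multiple of $\zeta\gl_1\gl_2^{-1}e_1-e_2$. The swap $(1\,2)$ sends $\zeta\mapsto\zeta^{-1}$ up to a scalar.

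For $d\ge3$ we may take $\gl_1=\go$, $\gl_2=1$, $\gl_3=\go^{-1}$. This has determinant-type product $1$, so it lies in $G(m,m,d)\subseteq G$ and multiplies $\zeta$ by $\go$. So all $\zeta$ lie in one orbit, which gives the type $\frac12 d(d-1)mC_2$ whatever $p$ is.

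For $d=2$ the available ratios $\gl_1\gl_2^{-1}$ under the constraint $(\gl_1\gl_2)^{m/p}=1$ are the main step. Write $\gl_1=\go^a$, $\gl_2=\go^b$. The condition is $p\mid a+b$, and the ratio is $\go^{a-b}$. The set of attainable values of $a-b$ mod $m$ is the set of $a-b$ with $a+b\in p\ZZ$. Since $a-b\equiv a+b \pmod 2$, these values are all residues when $p$ is odd, and the even residues when $p$ is even (using $2\mid m$ in that case).

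The swap inverts $\zeta$, and $\go^{-c}$ has the same parity as $\go^{c}$. So the two classes $\{\go^{\text{even}}\}$ and $\{\go^{\text{odd}}\}$ remain separate orbits, each of size $m/2$, when $p$ is even, and there is one orbit of size $m$ when $p$ is odd. Together with the diagonal orbit $2C_{m/p}$ when $m\ne p$, this yields the table in the statement.

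\emph{Abelianisation.} Lemma \ref{abelianisationlemma} gives $G_{\rm ab}=\prod_j C_{k_j}$. This is $C_2$ or $C_2\times C_2$ from the off-diagonal orbits, times $C_{m/p}$. For $d\ge3$ the off-diagonal part is a single $C_2$. Note that (\ref{G(n,p,d)ab}) as written with $C_{\gcd(2,p)}$ is literally correct for $d=2$. For $d\ge3$ the middle factor must be read as trivial, so I would state the summary as applying to $d=2$, with $\gcd(2,p)$ replaced by $1$ when $d\ge3$.

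The main obstacle is the parity argument for $d=2$. One has to check carefully that no element of $G$ outside the diagonal-times-swap description mixes the parity classes. That follows because every element of $G(m,p,2)$ is of that monomial form.
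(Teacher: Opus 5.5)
Your proof is correct and is essentially the paper's argument: permutation conjugation for the diagonal root lines, and explicit conjugation of the off-diagonal root lines. Your element $\diag(\go,1,\go^{-1},1,\ldots,1)\in G(m,m,d)$ plays the role of the paper's product of three reflections for $d\ge3$, and for $d=2$ your set of attainable ratios $\go^{a-b}$ with $p\mid a+b$ coincides with the paper's $\go^{2j+pk}$, while your monomial-form remark makes explicit why the two parity classes cannot merge when $p$ is even.

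Your caveat on the summary formula is also correct. For $d\ge3$, the lemma's own reflection type together with Lemma \ref{abelianisationlemma} gives $G(m,p,d)_{\rm ab}=C_2\times C_{m/p}$; for example, $G(2,2,3)\cong S_4$ has abelianisation $C_2$, not $C_2\times C_2$. So the factor $C_{\gcd(2,p)}$ in (\ref{G(n,p,d)ab}) must be dropped when $d\ge3$.
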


\begin{proof}
It suffices to find which reflection orbit the reflections of (\ref{diagonalreflections})
and (\ref{offdiagonalreflections}) lie on.
We observe that $g_{jk}:=r_{e_j-e_k,-1}\in G(m,p,d)$ is the permutation matrix which exchanges 
$e_j$ and $e_k$, and so 
	$$ \pmat{0&1\cr1&0}^{-1}\pmat{\xi&0\cr0&1} \pmat{0&1\cr1&0} = \pmat{1&0\cr0&\xi}
	\Implies g_{jk}^{-1} r_{e_j,\xi} g_{jk} = r_{e_k,\xi}, $$
and hence there is a single orbit of the $d$ reflection subgroups $C_{m\over p}$ ($m\ne p$) for
the diagonal reflections, and these reflections generate the
	reducible reflection subgroup 
	$(C_{m\over p})^d$.

For the ${d\choose 2}m$ reflections (\ref{offdiagonalreflections}) of order $2$, 
we first consider the case $d=2$. Now
$$ \pmat{\xi&0\cr0&1}^{-1}\pmat{0&1\cr1&0}\pmat{\xi&0\cr0&1}
=\pmat{0&\overline{\xi}\cr\xi&0}, \quad
\pmat{0&\zeta\cr\overline{\zeta}&0}^{-1}\pmat{0&\overline{\xi}\cr\xi&0}
\pmat{0&\zeta\cr\overline{\zeta}&0} = \pmat{0&\zeta^2\xi\cr\overline{\zeta}^2\overline{\xi}&0}, $$
so that the orbit of $r_{e_1-e_2,-1}$ is
$$ \pmat{0&1\cr1&0}^{G(m,p,2)} =
\big\{ \pmat{0&\zeta\cr\overline{\zeta}&0}: \zeta=\go^{2j+pk},j,k\in\ZZ_m\big\}, $$
which is all the reflections of type (\ref{offdiagonalreflections}), 
unless $p$ is even (and so $m$ is even), in which case there are two orbits, i.e.,
$$ \pmat{0&1\cr1&0}^{G(m,p,2)} =
\bigl\{ \pmat{0&\go^{2j}\cr\overline{\go}^{2j}&0}\bigr\}_{1\le j\le{m\over2}} ,\qquad
\pmat{0&\go\cr\overline{\go}&0}^{G(m,p,2)} =
\bigl\{ \pmat{0&\go^{2j-1}\cr\overline{\go}^{2j-1}&0}\bigr\}_{1\le j\le{m\over2}}. $$

For $d\ge3$ and any value of $p$, let $V:=\spam\{e_1,e_2,e_3\}$ and
$$ g:= (r_{e_2-e_3,-1})(r_{\zeta e_2-e_3,-1})(r_{e_1-e_2,-1}), \quad \zeta^m=1,
	\qquad
g\bigl|_V = \pmat{0&1&0\cr\overline{\zeta}&0&0\cr0&0&\zeta}. $$
Then 
$$ g\bigl|_V (r_{\zeta e_1-e_2,-1}) g^{-1}\bigl|_V
	= g|_V \pmat{0&\zeta&0\cr\overline{\zeta}&0&0\cr0&0&1} g^{-1}|_V
	= \pmat{0&1&0\cr1&0&0\cr0&0&1}=r_{e_1-e_2,-1}, $$
so that $r_{\zeta e_1-e_2,-1}$ is in the orbit of $r_{e_1-e_2,-1}$.
Since all the permutation matrices are contained in $G(m,p,d)$,
$r_{\zeta e_1-e_2,-1}$ can be conjugated to any $r_{\zeta e_j-e_k,-1}$,
and so there is a single orbit of reflections of the type (\ref{offdiagonalreflections}).
\end{proof}

We observe that for $m=p$, we have $C_{m\over p}=1$, 
which contains no reflections, 
and so the separate treatment of the cases $m=p$ and $m\ne p$ in Lemma \ref{G(m,p,d)-orbits}
is simply for clarity.


\begin{example}
From Lemma \ref{G(m,p,d)-orbits}, we may write $G(m,p,d)=G^{(k_1,\ldots,k_j)}$,
$j\in\{1,2,3\}$,
and therefore construct the lattice of its normal reflection subgroups
	(see, e.g., Figures \ref{G822-lattice}, \ref{G2p22-lattice}).
For the six cases, in the order presented, we have
$$ 
G^{(2)}, \quad G^{(2,2)}, \qquad
G^{(2,{m\over p})}, \quad G^{(2,2,{m\over p})}, \qquad
G^{(2)}, \quad G^{(2,{m\over p})}. 
$$
In this way, many familiar facts about $G(m,p,d)$ and its normal reflection subgroups
can be obtained and understood in a transparent way, e.g.,
from the exercises of \cite{LT09}, we have
\begin{itemize}
\item $G(m,p,d)\lhd G(m,1,d)$ (Exer.\ 2.5).
\item $G(m,p,2)\lhd G(2m,2,2)$ (Exer.\ 2.7).
\item For $d\ge3$, the reflections in $G(m,m,d)$ form a single conjugacy class (Exer.\ 2.14).
\item Every reflection in $G(m,m,d)$ has order $2$ if and only 
	$m=p$ or $2p$ (Exer.\ 2.15).
\end{itemize}
From the first two cases, we have that
	$G(m,m,2)$,
the dihedral group of order $2m$, $m>2$, has a (nontrivial) normal reflection
subgroup if and only if $m$ is even, i.e., $G({m\over2},{m\over2},2)$ which
occurs twice as a normal subgroup.
\end{example}

\begin{example} We have $G(1,1,n)=S_n$, which has rank $n-1$. 
	Its reflections $r_{e_j-e_k}$ act as transpositions (odd permutations) on $e_1,\ldots,e_n$. Since $G(1,1,n)=G^{(2)}$ it has no normal reflection subgroups.
We observe that for $n\ge 5$ the normal subgroups of $S_n$ are the alternating group
$A_n$ which is not a reflection group, since it contains no reflections.
\end{example}

\begin{example}
The imprimitive 
	reflection group $G(m,p,d)$ has a unique
	maximal 
	abelian normal reflection subgroup $(C_{m\over p})^d$, i.e., the one generated by
	its diagonal reflections. 
\end{example}



\begin{figure}[h]
\begin{center}
\begin{tikzpicture}
    \matrix (A) [matrix of nodes, row sep=1.0cm, column sep = -1.5 cm]
    {
	    && $G^{(2,2,p)}=G(2p,2,2)$ &&& \\
	$G^{(1,2,p)}=G(p,1,2)$ && $G^{(2,1,p)}=G(p,1,2)$ && $G^{(2,2,1)}=G(2p,2p,2)$ & \\
	& \greybox{$G^{(1,1,p)}=C_p\times C_p$} && $G^{(1,2,1)}=G(p,p,2)$ && $G^{(2,1,1)}=G(p,p,2)$ \\
	&&&  \greybox{$G^{(1,1,1)} =1^*$} && \\
    };
    \draw (A-1-3)--(A-2-1);
    \draw (A-1-3)--(A-2-3);
    \draw (A-1-3)--(A-2-5);
    \draw (A-2-1)--(A-3-2);
    \draw (A-2-1)--(A-3-4);
    \draw (A-2-3)--(A-3-2);
    \draw (A-2-3)--(A-3-6);
    \draw (A-2-5)--(A-3-4);
    \draw (A-2-5)--(A-3-6);
    \draw (A-3-2)--(A-4-4);
    \draw (A-3-4)--(A-4-4);
    \draw (A-3-6)--(A-4-4);
\end{tikzpicture}
\end{center}
\vskip -0.7 truecm
\caption{\label{G2p22-lattice}
 The lattice of normal reflection subgroups of 
	$G^{(2,2,p)}=G(2p,2,2)$, for $p$ prime (which is maximal).
Note the repeated normal subgroups $G(p,1,2)$ and $G(p,p,2)$.}
\end{figure}

\begin{figure}[h]
\begin{center}
\begin{tikzpicture}
    \matrix (A) [matrix of nodes, row sep=1.0cm, column sep = -0.8 cm]
    {
	    & $G^{(2,{m\over p})}=G(m,p,d) $ & \\
	    $G^{(1,{m\over p})}=(C_{m\over p})^d $ && $G^{(2,{m\over q})}=G(m,q,d) $ \\
	    & $G^{(2,{m\over p})}= (C_{m\over q})^d $ & \\
    };
    \draw (A-1-2)--(A-2-1);
    \draw (A-1-2)--(A-2-3);
    \draw (A-2-1)--(A-3-2);
    \draw (A-2-3)--(A-3-2);
\end{tikzpicture}
\end{center}
\vskip -0.7 truecm
	\caption{\label{G(m,p,d)-lattice}
A sublattice of normal reflection subgroups of
	$G^{(2,{m\over p})}=G(m,p,d)$, $d\ge3$. }
\end{figure}

We now seek those $G(m,p,d)$ which are maximal reflection groups.
By the celebrated Chevalley-Shephard-Todd theorem, a finite group
$G\subset GL(\CC^n)$ of rank $n$ is a complex reflection group if and only if 
its ring of invariants is a polynomial ring, i.e., there
exist algebraically independent homogeneous polynomials
$p_1,\ldots,p_n$ of
degrees $d_1,\ldots,d_n$
for which $\CC[p_1,\ldots,p_n]$ is the ring of polynomial invariants
$\CC[x_1,\ldots,x_n]^G$.
The fundamental invariants $d_1,\ldots,d_n$ are called the
{\bf degrees} of the reflection group $G$. 

The centre of an irreducible complex reflection group $G$ is a group
of scalar matrices, and hence is cyclic, of order
\begin{equation}
\label{centreorder}
	|Z(G)|=\gcd(d_1,\ldots,d_n),
\end{equation}
where $d_1,\ldots,d_n$ are the degrees of $G$,
and 
$$ |G|=d_2d_2\cdots d_n. $$
For $G=G(m,p,d)$, the degrees are
$$ m, 2m,\ldots,(d-1)m, \ dm/p, $$
which gives
$$ |Z(G(m,p,d))|=\gcd(m,d{m\over p}) =\gcd(p{m\over p},d{m\over p})
        = {m\over p} \gcd(p,d). $$
Since $G(m,p,d)$ is irreducible for $(m,p,d)\ne(2,2,2)$, 
its collineation group has order
$$ {|G(m,p,d)|\over|Z(G(m,p,d))|}
= {d! m^{d-1} (m/p)\over \gcd(p,d) (m/p)}
= {d! m^{d-1}\over \gcd(p,d)}, $$
and we obtain in all cases (including $(m,p,d)=(2,2,2)$ and $d=1$) that
\begin{equation}
\label{Gmpdcollinorder}
|G(m,p,d)_{\rm coll}| = {d! m^{d-1}\over \gcd(p,d)}, 
\end{equation}
which gives the collineation preserving inclusion 
\begin{equation}
\label{collinbasicinclusion}
G(m,p,d) \lhd G(m,\gcd(p,d),d).
\end{equation}

In view of (\ref{collinbasicinclusion}), the candidate maximum 
reflection subgroups are 
\begin{equation}
\label{G(m,p,d)candidatemaximal}
G(ka,k,d), \qquad k\divides d, \quad a=1,2,3,\ldots.
\end{equation}
For $d=2$, 
we have the collineation preserving inclusion
(see Exercise 2.7 of \cite{LT09})
\begin{equation}
\label{G(m,1,2)inclustion}
G(m,1,2)\lhd G(2m,2,2),
\end{equation}
and so taking $k=1$ in (\ref{G(m,p,d)candidatemaximal}) 
does not give a maximal reflection group.

For $d=2$ (two degrees), the number of reflections is $d_1+d_2-2$, and so
we can use Proposition \ref{maximalrefcount} to determine when a rank $2$ 
complex reflection group is a maximal reflection group, i.e., when
$$ d_1+d_2-2 = 2\Bigl({d_1d_2\over\gcd(d_1,d_2)}-1\Bigr) 
\Iff d_1=d_2. $$
For primitive groups, we recover the maximal reflection groups
$G_7$, $G_{11}$, $G_{19}$, and for imprimitive groups we have the following.

\begin{example}
\label{G(m,p,2)maximalex}
The degrees of $G(m,p,2)$ are equal, giving a maximal reflection group, 
if and only if
$$ m=2{m\over p} \Iff p=2. $$
Since $p\divides m$, $m$ must be even, and so 
	$$ G(2a,2,2), \qquad a=2,3,4,\ldots  $$
are all the maximal imprimitive reflection groups of rank $2$.
Here we exclude the case $a=1$, which gives a maximal reflection group
	$G(2,2,2)$ which is reducible.

From (\ref{collinbasicinclusion}), we have
$$ G(m,p,2) \lhd G(m,\gcd(p,2),2) \qquad \hbox{(collineation preserving)}, $$
and this and (\ref{G(m,1,2)inclustion}) give
the collineation preserving inclusions
\begin{align*} 
G(m,p,2) &\lhd G(m,1,2) \lhd G(2m,2,2)\qquad \hbox{($m$ odd)}, \cr
G(m,p,2) &\lhd G(m,2,2) \qquad \qquad\qquad\qquad\hbox{($m$ even, $p$ even)}, \cr
G(m,p,2) &\lhd G(m,1,2) \lhd G(2m,2,2) \qquad \hbox{($m$ even, $p$ odd)}.
\end{align*}
Therefore the maximal reflection group which contains $G(m,p,2)$
as a normal subgroup is as follows
\begin{equation}
G(m,p,2) \lhd G({2m\over\gcd(m,p,2)},2,2) \qquad
	\hbox{(maximal reflection group)}.
\end{equation}
\end{example}

We now present the general case.

\begin{theorem}
\label{imprimitivemaximal}
The imprimitive complex reflection groups which are maximal 
reflection groups are
\begin{align}
\label{maximalimprimitive}
G(2a,2,2), & \qquad d=2, \quad a=2,3,4,\ldots,\cr
G(ka,k,d), & \qquad d\ge3, \quad k\divides d, \quad a=1,2,3,\ldots,
\end{align}
with the remaining ones being proper normal subgroups of the 
maximal reflection groups that contain them, i.e., 
\begin{align} 
\label{G(m,p,d)inclusion}
G(m,p,2) &\lhd G({2m\over\gcd(m,p,2)},2,2), \qquad d=2, \cr
G(m,p,d) &\lhd G(m,\gcd(p,d),d), \qquad\quad d\ge3.
\end{align}
\end{theorem}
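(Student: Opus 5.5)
The plan is to treat $d=2$ and $d\ge3$ separately, in each case identifying the maximal reflection group $M$ of $G=G(m,p,d)$ explicitly. The starting point is that $M$ consists of scalar multiples of elements of $G$. So $M$ is again a group of monomial matrices with the same system of imprimitivity, hence irreducible and imprimitive, of the form $G(m',p',d)$ up to conjugacy. It also has the same collineation group as $G$. By Lemma \ref{normalsgmaxreflectgp}, $G\lhd M$, so it suffices to identify $M$ and show that the groups listed in (\ref{maximalimprimitive}) are exactly the fixed points $M=G$.

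For $d=2$ the work is essentially done in Example \ref{G(m,p,2)maximalex}. Proposition \ref{maximalrefcount}, together with the reflection count $d_1+d_2-2$, gives maximality if and only if the degrees are equal, that is, $p=2$. The chain of collineation preserving inclusions $G(m,p,2)\lhd G(m,\gcd(p,2),2)$ and (\ref{G(m,1,2)inclustion}) then lands in $G(2m/\gcd(m,p,2),2,2)$, which has equal degrees and hence is maximal. I only need to record that $a=1$ gives the reducible $G(2,2,2)$ and is excluded.

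For $d\ge3$ I first show that $m'=m$. Every off-diagonal reflection of $M$ has the form (\ref{offdiagonalreflections}) with a root of unity $\zeta'$ of order dividing $m'$. It equals $\lambda g$ for some $g\in G$ and scalar $\lambda$. Since $d\ge3$, this matrix has at least one diagonal entry equal to $1$, while the corresponding diagonal entry of $g$ is an $m$-th root of unity. Hence $\lambda^{-1}$ is an $m$-th root of unity, so $\zeta'$ is one as well, and $m'\mid m$; the reverse divisibility follows from $G\subset M$. Next, comparing collineation orders with (\ref{Gmpdcollinorder}) gives $\gcd(p',d)=\gcd(p,d)$. The inclusion $G(m,p,d)\subset G(m,p',d)$ gives $p'\mid p$.

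Now (\ref{collinbasicinclusion}) shows that $G(m,\gcd(p,d),d)$ is a reflection group containing $G$ with the same collineation group. Since $M$ is the largest such group, $p'\mid\gcd(p,d)$, and then $\gcd(p',d)=\gcd(p,d)$ forces $p'=\gcd(p,d)$. Therefore $M=G(m,\gcd(p,d),d)$, and $G$ is maximal exactly when $p\mid d$, which is the family $G(ka,k,d)$.

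I expect the main obstacle to be the step $m'=m$. This is where $d\ge3$ is genuinely used, since for $d=2$ the scalar can enlarge $m$, as in $G(m,1,2)\lhd G(2m,2,2)$. To make it watertight I must check that a hidden reflection of $G$ cannot produce a diagonal reflection of order not dividing $m$ without also producing off-diagonal ones. I plan to argue via the eigenvalue description of hidden reflections: the multiplicity-$(d-1)$ eigenvalue $\lambda_2$ of an element of $G$ is an entry of a monomial matrix, hence in $\mu_m$ after accounting for cycle structure. I also need to confirm that the identification of $M$ with some $G(m',p',d)$ is legitimate. Irreducibility and monomiality are inherited, and an imprimitive irreducible reflection group preserving a system of imprimitivity is a $G(m',p',d)$ by the Shephard-Todd classification.
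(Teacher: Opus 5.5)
Your proposal is correct. For $d=2$ it is exactly the paper's argument (Example \ref{G(m,p,2)maximalex}), but for $d\ge3$ you take a genuinely different route.

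\textbf{The paper's route for $d\ge3$.} The paper starts from the candidates $G(ka,k,d)$, $k\divides d$, supplied by (\ref{collinbasicinclusion}). It writes each as $G^{(2,a)}$ via Lemma \ref{G(m,p,d)-orbits} and uses Theorem \ref{Galatticethm} to list its normal reflection subgroups: the reducible $G^{(1,a)}$ and the groups $G(ka,kr,d)$ with $r\divides a$. Such a subgroup is itself a candidate and collineation preserving only if $\gcd(k,d)=\gcd(kr,d)$ with $kr\divides d$, which forces $r=1$. So every candidate is maximal, and $G(m,\gcd(p,d),d)$ is then the maximal group of $G(m,p,d)$. In this argument, the fact that $m$ cannot grow when $d\ge3$ is hidden inside that enumeration. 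There is a single orbit of off-diagonal reflections, so no analogue of $G(m,1,2)\lhd G(2m,2,2)$ can occur. The paper also tacitly uses, as you do, that the maximal group of a $G(m,p,d)$ is again some $G(m',p',d)$.

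\textbf{Your route.} You compute $M$ directly. An off-diagonal reflection has $d-2\ge1$ diagonal entries equal to $1$, which pins the scalar to $\mu_m$ and gives $m'=m$. This isolates exactly where $d\ge3$ is used. Comparing collineation orders, using $p'\divides p$, and invoking maximality of $M$ then give $p'=\gcd(p,d)$. Your approach is more elementary and self-contained, and it produces the maximal group and the list of maximal groups at once. The paper's approach instead stays inside its reflection-type and lattice machinery.

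\textbf{Three small points to tidy up.}
\begin{enumerate}
\item $M$ contains the permutation matrices of $G$ and is generated by monomial reflections. Hence $M$ is literally some $G(m',p',d)$, not merely conjugate to one, and this is what your entry argument needs.
\item Your worry about diagonal reflections is unnecessary. Once $M=G(m',p',d)$, the off-diagonal reflections alone give $m'\divides m$. In any case, a diagonal reflection has $d-1$ entries equal to $1$, so the same argument applies directly and no eigenvalue analysis is needed.
\item ``$M$ is the largest such group'' deserves its one-line justification. For $d\ge3$, each reflection of an overgroup with the same collineations is a scalar multiple of a reflection or hidden reflection of $G$. The reflection it gives is unique, so it already lies in $M$.
\end{enumerate}
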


\begin{proof} We have already considered the case $d=2$ in 
Example \ref{G(m,p,2)maximalex}. 
We now consider the case $d\ge3$,
which is 
simpler because the reflection orbits 
are not as complicated.

If some of the candidate groups in (\ref{G(m,p,d)candidatemaximal}) 
are not maximal reflection groups, then they appear as a 
collineation preserving normal subgroups of a larger such group,
i.e., there is an analogue of (\ref{G(m,1,2)inclustion}).
Therefore, we now consider normal reflection subgroups of 
the groups $G(ka,k,d)$ of (\ref{G(m,p,d)candidatemaximal}),
as given by the reflection types of Lemma \ref{G(m,p,d)-orbits}.

For $a=1$ ($m=p$), we have $G(k,k,d)=G^{(2)}$, which has 
no nontrivial normal reflection subgroups.
For $a\ge2$ ($m\ne p$), we have
$$ G(ka,k,d)=G^{(2,a)},  $$
giving the normal reflection subgroups
$$ G^{(1,a)}, \qquad\qquad
G^{(2,{a\over r})}=G(ka,kr,d),  \quad r\divides a. $$
The subgroup $G^{(1,a)}$ is generated by the diagonal reflections
in $G(ka,k,d)$, and hence is 
reducible.
By (\ref{Gmpdcollinorder}), the subgroup 
	$G^{(2,{a\over r})}=G(ka,kr,d)=G(kr({a\over r}),kr,d)$
is collineation preserving if and only if
$$ d!{(ka)^{d-1}\over\gcd(k,d)} = d!{(ka)^{d-1}\over\gcd(kr,d)}
\Iff \gcd(k,d)=\gcd(kr,d) .  $$
	For $G(ka,kr,d)$ to have the form of (\ref{maximalimprimitive}), we 
	must have $kr\divides d$, and so 
$$ \gcd(k,d)=\gcd(kr,d) \Implies k=kr \Implies r=1, $$
and so none of the proper normal reflection subgroups of $G(ka,k,d)$ are 
collineation preserving, i.e., (\ref{maximalimprimitive}) gives 
all the maximal reflection groups.

The maximal reflection group for $G(m,p,2)$ was given in Example \ref{G(m,p,2)maximalex}.
For $d\ge 3$, we observe that (\ref{collinbasicinclusion}) gives the desired 
maximal reflection group, since $p\divides m$ gives
$$ G(m,p,d) \lhd G(m,\gcd(p,d),d)
=G( k {m\over\gcd(p,d)} ,k,d), \qquad k=\gcd(p,d). $$
i.e., since $k\divides d$, the larger group in the collineation preserving inclusion above is a maximal reflection group.
\end{proof}

\begin{table}[H]
\caption{
\label{G(m,p,d)TableOrbits}
The reflection types of the imprimitive complex reflection groups. }
\smallskip
\begin{tabular}{ |  >{$}l<{$} | >{$}l<{$} | >{$}l<{$} | >{$}l<{$} | >{$}l<{$} | >{$}l<{$} | }
\hline
&&&&&\\[-0.3cm]
\hbox{ST} & \hbox{Rank} & \hbox{Order} & \hbox{Reflection type} & |Z(G)| & 
	 |G_{\rm coll}| \\[0.1cm]
\hline
&&&&&\\[-0.2cm]
	G(1,1,d),\ d\ge2 & d-1 & d! & {1\over2}d(d-1)C_2 & 1 & d! \\
&&&&&\\[-0.3cm]
G(m,m,2),\ {\scriptstyle\hbox{$m$ odd, $m\ne1$}} & 2 & 2m & mC_2 & 1 & 2m \\
G(m,m,2),\ \hbox{$m$ even, $m\ne2$} & 2 & 2m & {m\over2}C_2,{m\over2}C_2 & 2 &  m \\

G(m,m,2),\ \hbox{$p$ odd, $p\ne m$} & 2 & 2m & mC_2, 2C_{m\over p} & 1 & 2m \\
G(m,p,2),\ \hbox{$p$ even, $p\ne m$} & 2 & 2m & {m\over2}C_2,{m\over2}C_2, 2C_{m\over p} & 2 &  m \\
&&&&&\\[-0.3cm]
G(m,m,d),\ d\ge3 & d & d!  m^{d-1} & {1\over2}d(d-1)mC_2 & \gcd(m,d) & {d! m^{d-1}\over \gcd(m,d)} \\
G(m,p,d),\ d\ge 3, m\ne p & d & d! {m^d\over p} & {1\over2}d(d-1)mC_2, 2C_{m\over p} & {m\over p}\gcd(p,d) & {d! m^{d-1}\over \gcd(p,d)} \\ [0.3 cm]
\hline
\end{tabular}
\end{table}

The maximal reflection groups of (\ref{maximalimprimitive}) have
types 
$$G(2a,2,2)=G^{(2,2,a)}, \qquad G(ka,k,d)=G^{(2,a)}, $$
and hence have $4\gs_0(a)$ and $2\gs_0(a)$ normal reflection subgroups, respectively.

\begin{corollary}
The complex reflection 
group $G(m,p,d)$, $d\ge3$, is a maximal reflection group
if and only $p\divides d$.
\end{corollary}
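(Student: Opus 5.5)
This will follow from Theorem \ref{imprimitivemaximal} together with the collineation order formula (\ref{Gmpdcollinorder}). For $d\ge3$, Theorem \ref{imprimitivemaximal} lists the maximal imprimitive groups as $G(ka,k,d)$ with $k\divides d$ and $a\ge1$. These are exactly the groups $G(m,p,d)$ with $p\divides m$ and $p\divides d$: put $k=p$ and $a=m/p$. So the corollary amounts to checking that this parametrisation matches the condition $p\divides d$, and I will prove each direction directly.

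For the direction ($\Leftarrow$), suppose $p\divides d$. Then $\gcd(p,d)=p$, so $G(m,p,d)=G(p\cdot\frac mp,p,d)$ is of the form $G(ka,k,d)$ with $k=p\divides d$ and $a=m/p$. By Theorem \ref{imprimitivemaximal} it is therefore a maximal reflection group. Equivalently, in the inclusion (\ref{G(m,p,d)inclusion}) the larger group $G(m,\gcd(p,d),d)$ coincides with $G(m,p,d)$ itself.

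For the direction ($\Rightarrow$), suppose $p\nmid d$. Then $q:=\gcd(p,d)$ is a proper divisor of $p$. The collineation preserving inclusion (\ref{collinbasicinclusion}) gives $G(m,p,d)\lhd G(m,q,d)$, and the two groups have the same collineation group. They have different orders, $d!\,m^d/p\ne d!\,m^d/q$, so the inclusion is proper. Every element of $G(m,q,d)$ is then a scalar multiple of an element of $G(m,p,d)$.

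The one point that needs care is why this properness forces hidden reflections in $G(m,p,d)$. By Lemma \ref{normalsgmaxreflectgp} and the key observation preceding it, the maximal reflection group of $G(m,p,d)$ is generated by its reflections together with the reflections given by its hidden reflections. That group contains all reflections of any group sharing its collineation matrices. Since $G(m,q,d)$ is a reflection group with the same collineation matrices, its reflections lie in the maximal reflection group, so $G(m,q,d)$ is contained in it. Hence the maximal reflection group strictly contains $G(m,p,d)$, so $G(m,p,d)$ is not maximal.

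Alternatively, I could simply cite the second line of (\ref{G(m,p,d)inclusion}), which says the non-maximal groups are proper normal subgroups of $G(m,\gcd(p,d),d)$. I expect no real obstacle here beyond this bookkeeping.
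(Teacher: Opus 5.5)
Your proof is correct and follows the paper's route. The paper's entire proof is the rewriting $G(m,p,d)=G(p\cdot\frac{m}{p},p,d)$, which matches the list $G(ka,k,d)$, $k\divides d$, of Theorem \ref{imprimitivemaximal}; that is exactly your ($\Leftarrow$) step. Your explicit converse, via the proper collineation preserving inclusion $G(m,p,d)\lhd G(m,\gcd(p,d),d)$, just unpacks the second line of (\ref{G(m,p,d)inclusion}), with the small merit that it does not need the implicit fact that the parameters of $G(m,p,d)$ are uniquely determined.
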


\begin{proof} Since $p\divides m$, we have $G(m,p,d)=G(p({m\over p}),p,d)$.
\end{proof}

\begin{example} As simple example of (\ref{G(m,p,d)inclusion}), we have the 
collineation preserving inclusion
$$ G(4,2,3) \lhd G(4,\gcd(2,3),3)=G(4,1,3), $$
which shows that the condition $k\divides d$ is necessary to obtain
a maximal reflection group.
\end{example}

\section{The quotient by a normal reflection subgroup}
\label{quotientsSection}

Given that we have calculated the normal reflection subgroups of
the complex reflection groups of rank $d$, 
it is now natural to consider the quotients.
These were shown to be reflection groups (acting on an appropriate
subspace of $\Cd$) by \cite{BBR02}, with the quotients (mostly) calculated
in \cite{AW23}. We now give some details.

For a complex reflection group $G$ acting on $V=\Cd$, \cite{BBR02}
characterised those ``nice'' normal subgroups 
(``bon sous-groupes distingu\'es'') $N$ for which the quotient group $G/N$ acts faithfully on the tangent space to the
quotient variety $V/N$ at the point $0$ as a reflection group,
and that such groups include the normal reflection subgroups.
For $N$ a normal reflection subgroup, \cite{AW20} (proof of 
Theorem 1.2) effectively show that this action is given 
by the action of $G$ on the span of the
subset of the basic $N$-invariant polynomials $f_1,\ldots,f_d$ 
with $R_G(f_j)=0$, where 
$R_G$ the Reynolds operator for $G$, i.e.,
$$ R_G(f):={1\over|G|}\sum_{g\in G} f\circ g. $$
For example, if $N=1$, basic $N$-invariant polynomials are 
$x_1,\ldots,x_d$, and $R_G(x_j)=0$ is equivalent to $\sum_{g\in G}g=0$,
which holds for $G$ irreducible, in which case the action of 
$G$ on $x_1,\ldots,x_d$ is given by its action on $(x_1,\ldots,x_d)$.
We now consider some examples.

Basic invariant polynomials for $G(m,p,d)$ are given by
\begin{equation}
\label{G(m,p,d)invariantpolys}
\sum_{j=1}^d x_j^{mk}, \quad 1\le k\le d-1, \qquad
(x_1x_2\cdots x_d)^{m\over p},
\end{equation}
which have degrees $m,2m,\ldots,(d-1)m$, and $d{m\over p}$.
The ``power sum'' polynomials above can be replaced by 
$e_k(x_1^m,\ldots,x_d^m)$, 
where $e_k$ is the $k$-th elementary symmetric function
(see \cite{KM25} for a discussion about ``good basic invariants'').

\begin{example} 
\label{G26quotexample}
Consider the primitive complex reflection group  $G=G_{26}$ (order $1296$)
and its normal reflection subgroup $N=G(3,3,3)$ (order $54$), which 
are given by 
$$ G=G_{26}=\inpro{r_1,r_2,P_{(12)}}, 
	\qquad N=G(3,3,3)=\inpro{r_3,P_{(12)},P_{(23)}}, $$
where
$$ r_1 = \pmat{\go&0&0\cr 0&1&0\cr 0&0&1}, \quad
	r_2 = {-i\over\sqrt{3}} \pmat{\go&\overline{\go}&\overline{\go}\cr \overline{\go}&\go&\overline{\go}\cr \overline{\go}&\overline{\go}&\go}, \quad
r_3 = \pmat{0&\go &0 \cr \overline{\go}&0&0\cr 0&0&1}, \qquad
	\go:=e^{2\pi i\over3}, $$
$$ P_{(12)} = \pmat{0&1&0\cr 1&0&0\cr 0&0&1}, \quad
P_{(23)} = \pmat{1&0&0\cr 0&0&1\cr 0&1&0}. $$
By (\ref{G(m,p,d)invariantpolys}), 
invariant polynomials for $N$ (of degrees $3,6,3$) are given by 
$$ x^3+y^3+z^3, \qquad 
x^6+y^6+z^6, 
\qquad xyz. $$
Applying the Reynolds operator for $G$ to these gives
$$ R_G(x^3+y^3+z^3)=0, \qquad R_G(xyz)=0,  $$
$$ R_G(x^6+y^6+z^6)={1\over6}\bigl(x^6+y^6+z^6-10(x^3y^3+x^3z^3+y^3z^3)\bigr), $$
and so the $N$-invariant polynomial of degree $6$ can be replaced by 
a $G$-invariant one (on which $G$ and hence $G/N$ acts trivially).
On the span of the remaining $N$-invariant polynomials $G/N$ acts
faithfully as a reflection group 
(i.e., reflections map to reflections). 
Let's consider the action of the inverse of the generators for $G$.
For $r_1$, we have
$$ x^3+y^3+z^3\mapsto (\go x)^3+y^3+z^3=x^3+y^3+z^3, \quad
	xyz\mapsto (\go x)yz = \go (xyz), $$
and for $r_2$, we have
$$ x^3+y^3+z^3\mapsto {i\over\sqrt{3}}\bigl\{ x^3+y^3+z^3+6\overline{\go}xyz\bigr\}, \quad 
xyz\mapsto {i\over3\sqrt{3}}\bigl\{ \overline{\go}(x^3+y^3+z^3)-3\go xyz\bigl\}, $$
with the action of $P_{(12)}$ being trivial.
Hence the faithful action of $G/N$ on 
$$ V=\spam_\CC\{x^3+y^3+z^3,xyz\} $$ 
gives the complex reflection group generated by
$$ [r_1] = \pmat{1&0\cr0&\go}, \qquad
	[r_2] ={i\over\sqrt{3}} \pmat{1&\overline{\go}/3 \cr 6\overline{\go}& -\go}, $$
	which is easily seen to be the imprimitive complex reflection group $G_4$ (order $24$).
\end{example}

\begin{example}
\label{G7quotexample}
The normal reflection subgroups of $G_7$ and their invariant polynomials are as 
follows:
\begin{align*}
G^{(1,1,1)}=1 : &\qquad x,\ y, \cr
G^{(1,1,3)}=G_4 : &\qquad x^4 - 2\sqrt{3}i x^2y^2 + y^4, \ x^5y-xy^5, \cr
G^{(1,3,1)}=G_4 : &\qquad x^4 + 2\sqrt{3}i x^2y^2 + y^4, \ x^5y-xy^5, \cr
G^{(2,1,1)}=G(4,2,2) : &\qquad x^4 + y^4, \ x^2y^2, \cr
G^{(1,3,3)}=G_5 : &\qquad x^5y-xy^5, \ (x^4 \pm 2\sqrt{3}i x^2y^2 + y^4)^3, \cr
G^{(2,1,3)}=G_6 : &\qquad x^4 - 2\sqrt{3}i x^2y^2 + y^4, \ (x^5y-xy^5)^2, \cr
G^{(2,3,1)}=G_6 : &\qquad x^4 + 2\sqrt{3}i x^2y^2 + y^4, \ (x^5y-xy^5)^2, \cr
G^{(2,3,3)}=G_7 : &\qquad (x^4 \pm 2\sqrt{3}i x^2y^2 + y^4)^3, \quad (x^5y-xy^5)^2, 
\end{align*}
These were obtained by applying the Reynolds operator to homogeneous
polynomials (of the appropriate degree) in the invariant polynomials for
	subgroups, starting with the monomials (for the trivial subgroup $1$). We observe the following linear relations
\begin{align*}
(x^4+2\sqrt{3}i x^2y^2+y^4)^3+(x^4-2\sqrt{3}i x^2y^2+y^4)^3 
& =2(x^{12}-33x^8y^4-33x^4y^8+ y^{12}), \cr
(x^4 + 2\sqrt{3}i x^2y^2 + y^4)^3-(x^4 - 2\sqrt{3}i x^2y^2 + y^4)^3 
&=12\sqrt{3}i(x^5y-xy^5)^2,
\end{align*}
and so the first invariant polynomial for $G_7$ given above can be replaced by
one with integer coefficients, if so desired. Calculating the action of the generators 
for $G_7$ on the invariant polynomials given for $G_4$ gives the following
\begin{align*}
G^{(1,1,3)}=G_4 : &\qquad [R^2]=\pmat{1&0\cr0&-1}, \ [Z]=\pmat{\go&0\cr0&1}, \ [Z^R]=\pmat{1&0\cr0&1}, \cr
G^{(1,3,1)}=G_4 : &\qquad [R^2]=\pmat{1&0\cr0&-1}, \ [Z]=\pmat{1&0\cr0&1}, \ [Z^R]=\pmat{\go&0\cr0&1},
\end{align*}
so that 
	$$ {G_7\over G_4} = C_2\times C_3\qquad\hbox{(in both cases)}. $$
\end{example}

\begin{example} 
\label{G(4,2,2)quotsExample}
We consider 
$N=G(4,2,2)\lhd G_6,G_7,G_8,G_9,G_{10},G_{11},G_{13},G_{15}\subset G_{11}$.
A set of basic invariant polynomials for $N=G(4,2,2)$ is
$$ x^4+y^4, \quad x^2y^2. $$
With respect to this basis, the action of the generators for $G_{11}$
	on $\spam_\CC\{x^4+y^4,x^2y^2\}$ is given by
$$ [F]=\pmat{{1\over2}&{1\over4}\cr 3&-{1\over2}}, \quad
[R]=\pmat{1&0\cr 0&-1}, \quad
	[Z]=\pmat{-{1\over2}\overline{\go}&{1\over4}\overline{\go}\cr 
	-3\overline{\go}&-{1\over2}\overline{\go}}.  $$
These matrices are reflections (not all unitary), and it is not
obvious that they generate an imprimitive reflection group.
A basis of eigenvectors for $[Z]$ is given by 
	$$ B:=\pmat{1&-1\cr2\sqrt{3}i&2\sqrt{3}i}, $$
and in this basis we have
$$ [F]_B 
	=\pmat{0&\go\cr\overline{\go}&0}, \quad
[R]_B 
	=\pmat{0&-1\cr-1&0}, \quad
[Z]_B 
=\pmat{1&0\cr0&\go}, $$
where $[g]_B=B^{-1}[g]B$. Thus
$$ {G_{11}\over G(4,2,2)}\cong 
\inpro{\pmat{0&\go\cr\overline{\go}&0},\pmat{0&-1\cr-1&0},\pmat{1&0\cr0&\go}}
=G(6,2,2), $$
with the other quotients giving reflection subgroups of $G(6,2,2)$,
i.e.,  
\begin{align*}
{G_{15}\over G(4,2,2)} &\cong \inpro{[F]_B,[R^2]_B,[Z]_B}
=\inpro{\pmat{0&\go\cr\overline{\go}&0},\pmat{1&0\cr0&1},\pmat{1&0\cr0&\go}}
        \cong G(3,1,2), \cr
{G_{9}\over G(4,2,2)} &\cong \inpro{[F]_B,[R]_B}
=\inpro{\pmat{0&\go\cr\overline{\go}&0},\pmat{0&-1\cr-1&0}}
        \cong G(6,6,2), \cr
{G_{10}\over G(4,2,2)} &\cong \inpro{[R]_B,[Z]_B}
=\inpro{\pmat{0&-1\cr-1&0},\pmat{1&0\cr0&\go}}
        \cong G(3,1,2), \cr
{G_{13}\over G(4,2,2)} &\cong \inpro{[F]_B,[F^Z]_B,[R^2]_B}
=\inpro{\pmat{0&\go\cr\overline{\go}&0},\pmat{0&1\cr1&0},\pmat{1&0\cr0&1}}
\cong G(3,3,2)\cong  G(1,1,3), \cr
{G_{8}\over G(4,2,2)} & \cong \inpro{[R]_B,[R^F]_B}
       =\inpro{\pmat{0&-1\cr-1&0},\pmat{0&-\overline{\go}\cr-\go&0}}
        \cong G(3,3,2)\cong G(1,1,3), \cr
	{G_{7}\over G(4,2,2)} &\cong \inpro{[R^2]_B,[Z]_B,[Z^R]_B}
=\inpro{\pmat{1&0\cr0&1},\pmat{1&0\cr0&\go},\pmat{\go&0\cr0&1}}
\cong C_3\times C_3, \cr
	{G_{6}\over G(4,2,2)} & \cong 
\inpro{[R^2]_B,[Z]_B} =\inpro{\pmat{1&0\cr0&\go}},
\inpro{[R^2]_B,[Z^R]_B} =\inpro{\pmat{\go&0\cr0&1}}
        \cong C_3.
\end{align*}
\end{example}

From the reflection orbits of Lemma \ref{G(m,p,d)-orbits}, we obtain the following.

\begin{example} 
\label{G(m,p,2)quotExample}
For $m$ odd, $G(m,m,2)=G^{(2)}$ has no normal reflection subgroups,
and for $m$ even, we have $G(m,m,2)=G^{(2,2)}$, 
$G^{(1,2)}\cong G^{(2,1)}\cong G({m\over2},{m\over2},2)$, and 
\begin{equation}
\label{G(2,2)}
{G^{(2,2)}\over G^{(1,2)}}, {G^{(2,2)}\over G^{(2,1)}} 
= {G(m,m,2)\over G({m\over2},{m\over2},2)} \cong C_2, \qquad
\hbox{($m$ even)}.
\end{equation}
Now suppose that $p\divides m$, $p\ne m$, 
and ${m\over q}\divides {m\over p}$, i.e., $p\divides q$ and $q\divides m$.

For $p$ odd (two orbits), we have $G(m,p,2)=G^{(2,{m\over p})}$, 
and
\begin{align}
{G^{(2,{m\over p})}\over G^{(1,{m\over q})}}
&=  {G(m,p,2)\over C_{m\over q}\times C_{m\over q}} \cong G(q,p,2),
\qquad \hbox{($p$ odd)}, \label{G(1,m/q)} \\
{G^{(2,{m\over p})}\over G^{(2,{m\over q})}}
&=  {G(m,p,2)\over G(m,q,2)} \cong C_{q\over p},\qquad \hbox{($p$ odd)}.
\label{G(2,m/q)}
\end{align}
For $p$ even (three orbits), we have $G(m,p,2)=G^{(2,2,{m\over p})}$, and
\begin{align}
{G^{(2,2,{m\over p})}\over G^{(1,1,{m\over q})}} 
&=  {G(m,p,2)\over C_{m\over q}\times C_{m\over q}} \cong G(q,p,2),
\qquad \hbox{($p$ even)}, \label{G(1,1,m/q)} \\
{G^{(2,2,{m\over p})}\over G^{(2,2,{m\over q})}}
&=  {G(m,p,2)\over G(m,q,2)} \cong C_{q\over p},  \qquad \hbox{($p$ even)},
	\label{G(2,2,m/q)} \\
{G^{(2,2,{m\over p})}\over G^{(1,2,{m\over q})}},
{G^{(2,2,{m\over p})}\over G^{(2,1,{m\over q})}} 
	&= {G(m,p,2)\over G({m\over2},{q\over2},2) }
\cong  C_2\times C_{q\over p}, \qquad \hbox{($p$ even)}.
\label{G(1,2,m/q)}
\end{align}
By collecting cases with different orbit structures, 
one obtains  
\begin{equation}
\label{quotswithoutorbits}
{G(m,p,2)\over C_{m\over q}\times C_{m\over q}} \cong G(q,p,2), \qquad
{G(m,p,2)\over G(m,q,2)} \cong C_{q\over p}, \qquad
{G(m,p,2)\over G({m\over2},{q\over2},2) }
	\cong C_2\times C_{q\over p} \quad\hbox{($p$ even)}.
\end{equation}
which are (2.a), (2.b), (2.c) of Theorem 6.1 of \cite{AW23}.
\end{example}

\begin{example}
\label{G(m,p,d)normalsgs}
For $d\ge3$, $G(m,m,d)=G^{(2)}$ has no normal reflection subgroups,
	and for $G(m,p,d)=G^{(2,{m\over p})}$, $p\ne m$, we have
\begin{equation}
\label{G(1,m/q)dge3}
{G^{(2,{m\over p})}\over G^{(1,{m\over q})}}
	=  {G(m,p,d)\over (C_{m\over q})^d} \cong G(q,p,d),
\qquad d\ge3,
\end{equation}
\begin{equation}
\label{G(2,m/q)dge3}
{G^{(2,{m\over p})}\over G^{(2,{m\over q})}}
=  {G(m,p,d)\over G(m,q,d)} \cong C_{q\over p},
\qquad d\ge3,
\end{equation}
where $p\divides q$ and $q\divides m$.
\end{example}

\begin{example}
The complex reflection group $G=G_{28}$ is generated by
the reflections 
$$ \pmat{1&0&0&0\cr 0&0&1&0\cr 0&1&0&0\cr 0&0&0&1}, \quad
 \pmat{1&0&0&0\cr 0&1&0&0\cr 0&0&0&1\cr 0&0&1&0},  \quad
 \pmat{1&0&0&0\cr 0&1&0&0\cr 0&0&1&0\cr 0&0&0&-1}, \quad
	{1\over2}\pmat{1&1&1&1\cr 1&1&-1&-1\cr 1&-1&1&-1\cr1&-1&-1&1}. $$
Invariant polynomials for $N=G(2,2,4)$ are
$$ x_1^2+x_2^2+x_3^2+x_4^2, \quad x_1^4+x_2^4+x_3^4+x_4^4, \quad
x_1^6+x_2^6+x_3^6+x_4^6, \quad x_1x_2x_3x_4, $$
with the first also being $G$-invariant, and 
for the third $R_G(x_1^6+x_2^6+x_3^6+x_4^6)\ne0$. 
Thus, we consider the action of $G$ on 
$x_1^4+x_2^4+x_3^4+x_4^4$ and $x_1x_2x_3x_4$.
These polynomials are fixed by the permutation matrices, 
and the action of the diagonal reflection is
$$ x_1^4+x_2^4+x_3^4+x_4^4\mapsto x_1^4+x_2^4+x_3^4+x_4^4, 
	\quad x_1x_2x_3x_4\mapsto -x_1x_2x_3x_4. $$
The final (non-monomial) generating reflection maps as follows
\begin{align*}
x_1^4+x_2^4+x_3^4+x_4^4 &\mapsto -{1\over2}(x_1^4+x_2^4+x_3^4+x_4^4)+6(x_1x_2x_3x_4)
        +{3\over 4} (x_1^2+x_2^2+x_3^2)^2, \cr
	x_1x_2x_3x_4 &\mapsto   {1\over8}(x_1^4+x_2^4+x_3^4+x_4^4)
        +{1\over2} (x_1x_2x_3x_4)-{1\over16}(x_1^2+x_2^2+x_3^2+x_4^2)^2,
\end{align*}
so that, in this case, $G$ does not map the span of the given basic invariant polynomials
for $N$ with
$R_G(f)=0$ to itself, and so instead we must take the image of this space
under the action of $G$. In this way, the action of 
the last two reflections on the span of 
$(x_1^2+x_2^2+x_3^2+x_4^2)^2$, $x_1^4+x_2^4+x_3^4+x_4^4$ and $x_1x_2x_3x_4$ 
is given by the matrices
	$$ \pmat{1&0&0\cr 0&1&0\cr 0&0&-1}, \quad
\pmat{1&{3\over4}&-{1\over16}\cr 0&-{1\over2}&{1\over8}\cr 0&6&{1\over2}}, $$
	which generate a rank two reflection group, 
	namely $G/N\cong G(3,3,2)\cong G(1,1,3)\cong S_3$.
\end{example}

Let $G=G^{(k_1,\ldots,k_m)}$ be a complex reflection group,
$N=G^\ga=G^{(\ga_1,\ldots,\ga_m)}$ be a normal 
reflection subgroup, and $r_{a_j}\in R_{a_j}^G$ be reflection of 
(maximal) order $k_j$. Then
\begin{itemize}
\item $r_{a_j} N$ is a reflection in $G/N$.
\item The order of $r_{a_j} N$ is ${k_j/\ga_j}$.
\end{itemize}
The latter follows from the fact $r_{a_j}^m\in R_{a_j}^G$ and
$R_{a_j}^G\cap R_{a_j}^N=\inpro{r_{a_j}^{k_j/\ga_j}}$, which gives
$$ (r_{a_j} N)^m=r_{a_j}^m N=N 
\Iff r_{a_j}^m \subset \inpro{r_{a_j}^{k_j/\ga_j}}
\Iff (k_j/\ga_j)\divides m. $$
Moreover, $G\to G/N$ maps reflection orbits to reflection orbits,
and so Lemma \ref{abelianisationlemma} gives
\begin{equation}
\label{abelianisationG/N}
\bigl({G\over N}\bigr)_{\rm ab}=
\bigl({G^{(k_1,\ldots,k_m)}\over G^{(\ga_1,\ldots,\ga_m)}}\bigr)_{\rm ab}
\cong C_{k_1\over\ga_1}\times \cdots\times C_{k_m\over\ga_m}.
\end{equation}

From Example \ref{G(4,2,2)quotsExample}, we collect the nonabelian
quotients by 
$G(4,2,2)$.
\begin{align}
\label{G(4,2,2)quots}
& {G_{11}\over G(4,2,2)} \cong G(6,2,2), \quad
{G_{15}\over G(4,2,2)} \cong G(3,1,2), \quad
{G_9\over G(4,2,2)} \cong G(6,6,2), \cr
& {G_{10}\over G(4,2,2)} \cong G(3,1,2), \quad
{G_{13}\over G(4,2,2)} \cong G(1,1,3), \quad
{G_8\over G(4,2,2)} \cong G(1,1,3).
\end{align}

We can now describe all quotients by normal reflection subgroups.

\begin{theorem}
\label{QuotientsTheorem}
Let $G=G^{(k_1,\ldots,k_m)}$ be a complex reflection group
and $N=G^{(\ga_1,\ldots,\ga_m)}\ne1$ be a 
normal reflection 
subgroup, then the reflection group $G/N$ is abelian, 
	i.e., $[G,G]\lhd N$,
	given by
\begin{equation}
\label{G/Nequation}
{G\over N}={G^{(k_1,\ldots,k_m)}\over G^{(\ga_1,\ldots,\ga_m)}}
\cong C_{k_1\over\ga_1}\times \cdots\times C_{k_m\over\ga_m},
\end{equation}
except for the following cases
\begin{enumerate}[\rm (a)]
\item $\displaystyle {G(m,p,d)\over(C_{m\over q})^d} \cong G(m,q,d)$, 
		$d\ge2$,
where $p\divides q$ and $q\divides m$.
%

\item $G=G_8,G_9,G_{10},G_{11},G_{13},G_{15}$ and $N=G(4,2,2)$, with
$G/N$ given by (\ref{G(4,2,2)quots}).

\item $\displaystyle {G_{26}\over G(3,3,3)} \cong G_4$ \ (the only case where
	$G/N$ is primitive).

\item $\displaystyle {G_{28}\over G(2,2,4)} \cong G(3,3,2)\cong G(1,1,3)\cong S_3$.
\end{enumerate}
We observe that the normal subgroups $G(4,2,2)$, $G(3,3,3)$, $G(2,2,4)$ 
above are precisely the imprimitive complex reflection groups with 
more than one system imprimitivity.

Moreover, if $N$ is a collineation preserving subgroup of $G$ then
	$G/N$ is cyclic,
which is equivalent to 
\begin{equation}
\label{quotcyclicCondition}
{|G^{(k_1,\ldots,k_m)}|\over|G^{(\ga_1,\dots,\ga_m)}|}
=\lcm({k_1\over\ga_1},\ldots,{k_m\over\ga_m}). 
\end{equation}
\end{theorem}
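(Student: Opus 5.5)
The plan is to reduce the statement to a case check over the normal reflection subgroups classified in the previous sections, using (\ref{abelianisationG/N}) as the organising principle. Since $G\to G/N$ sends reflection orbits to reflection orbits, $(G/N)_{\rm ab}\cong C_{k_1/\ga_1}\times\cdots\times C_{k_m/\ga_m}$. Hence $G/N$ is abelian, and then equal to this product, if and only if $|G/N|=\prod_j k_j/\ga_j$. Equivalently, $[G,G]\subset N$. So the whole argument becomes an order comparison: for each pair $(G,N)$ we check whether $|G|/|N|$ equals $\prod_j k_j/\ga_j$.

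The first step treats the imprimitive groups. By Lemma \ref{G(m,p,d)-orbits}, the normal reflection subgroups of $G(m,p,d)$ are of three kinds. The subgroups $G^{(2,{m\over q})}=G(m,q,d)$ (and their $p$ even analogues) give cyclic quotients $C_{q/p}$ by Examples \ref{G(m,p,2)quotExample} and \ref{G(m,p,d)normalsgs}. The subgroups $G({m\over2},{q\over2},2)$ give $C_2\times C_{q/p}$; both of these agree with (\ref{G/Nequation}), which I will confirm by comparing orders. The remaining kind is $N=(C_{m/q})^d$, the diagonal subgroup. Here the quotient is the nonabelian group of case (a), which I will identify by the map $x_j\mapsto x_j^{m/q}$ on the $N$-invariants $x_j^{m/q}$.

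The second step treats the primitive groups. By Theorem \ref{primitivemaximal} and Theorem \ref{complexrefgpclass}, every pair $(G,N)$ with $G$ primitive lies inside one of the lattices for $G_7$, $G_{11}$, $G_{19}$, $G_{26}$ or $G_{28}$; the other maximal groups have no nontrivial normal reflection subgroups. For each such pair I will compare $|G|/|N|$, taken from Table \ref{GjTablerank2} and Table \ref{GjTablerankdge3}, with $\prod k_j/\ga_j$. Whenever $N$ is primitive, the orders should match; for example $|G_{11}|/|G_7|=4=2\cdot2\cdot1$. Order mismatches can only occur when $N$ is imprimitive, that is, $N=G(4,2,2)$, $G(3,3,3)$ or $G(2,2,4)$. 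For these, Examples \ref{G(4,2,2)quotsExample}, \ref{G26quotexample} and the $G_{28}$ example identify the quotients as in (b)--(d). The abelian cases $G_7/G(4,2,2)\cong C_3\times C_3$ and $G_6/G(4,2,2)\cong C_3$ are consistent with (\ref{G/Nequation}).

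For the final claim, suppose $N$ is collineation preserving. Then $N$ contains the same matrices up to scalars, so $G=NZ(G)$. Thus $G/N\cong Z(G)/(Z(G)\cap N)$ is a quotient of the cyclic group $Z(G)$, and so is cyclic. An abelian group $\prod C_{k_j/\ga_j}$ of this order is cyclic exactly when its exponent, the lcm of the $k_j/\ga_j$, equals its order. This gives (\ref{quotcyclicCondition}).

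The main obstacle is the exhaustiveness of the primitive case check. It needs a uniform argument that a primitive $N$ forces $[G,G]\subset N$. I would first try Clifford theory: an irreducible $N$ gives $G=N\cdot C_G(N)$ modulo scalars. Failing that, I would rely on direct order comparison across the finitely many lattices.
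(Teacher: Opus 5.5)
Your plan is essentially the paper's own proof. Both use (\ref{abelianisationG/N}) to reduce abelianness of $G/N$ to the order comparison $|G|/|N|=\prod_j k_j/\ga_j$, check this over the imprimitive families and the finitely many primitive lattices (reading the nonabelian quotients off the worked examples), and get cyclicity for collineation preserving $N$ from $G=NZ(G)$. One caution about your optional uniform shortcut: for irreducible $N$ the centraliser $C_G(N)$ is scalar, so ``$G=N\,C_G(N)$ modulo scalars'' would make every primitive $N$ collineation preserving, which fails for $G_7\lhd G_{11}$. What does hold is that $NZ(G)/N$ is central in $G/N$ with quotient $G_{\rm coll}/N_{\rm coll}$; this quotient is trivial or $S_4/A_4\cong C_2$ in all primitive cases, so $G/N$ is abelian there. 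Your fallback case check, which is what the paper does, already suffices anyway.
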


\begin{proof} By the definition of the abelianisation and (\ref{abelianisationG/N}),
we have that
	$$ {G\over N}= {G^{(k_1,\ldots,k_m)}\over G^{(\ga_1,\ldots,\ga_m)}}\to
\bigl({G^{(k_1,\ldots,k_m)}\over G^{(\ga_1,\ldots,\ga_m)}}\bigr)_{\rm ab} 
\cong C_{k_1\over\ga_1}\times\cdots\times C_{k_m\over\ga_m} $$
is a surjective homomorphism, which is injective, i.e., $G/N$ is abelian
	and given by (\ref{G/Nequation}), if
and only if the group order is preserved, i.e.,
\begin{equation}
\label{G/Nabeliancdn}
{G^{(k_1,\ldots,k_m)}\over G^{(\ga_1,\dots,\ga_m)}} \hbox{ is abelian} \Iff
{|G^{(k_1,\ldots,k_m)}|\over|G^{(\ga_1,\dots,\ga_m)}|}
={k_1 k_2\cdots k_m\over\ga_1\ga_2\cdots\ga_m}.
\end{equation}
It is easy to verify when (\ref{G/Nabeliancdn}) holds from the reflection 
group orders and reflection types (see Example \ref{quotientineqexample}), 
which leads to the 
exceptions (a), (b), (c), (d). The nonabelian quotients are calculated in 
Examples \ref{G26quotexample}, \ref{G7quotexample},
\ref{G(4,2,2)quotsExample}, \ref{G(m,p,2)quotExample}, 
\ref{G(m,p,d)normalsgs}. 

If $N$ is collineation preserving, then $N$ is irreducible, 
so that $G=\inpro{\zeta}N$, where $\zeta$ is a primitive $|Z(G)|/|Z(N)|=|G|/|N|$
root of unity, and $G/N\cong\inpro{\zeta}$ is cyclic, so we must have
\begin{equation}
\label{G/Ncyclic-cdn}
C_{k_1\over\ga_1}\times \cdots\times C_{k_m\over\ga_m} \hbox{ is cyclic}
\Iff {k_1 k_2\cdots k_m\over\ga_1\ga_2\cdots\ga_m}=\lcm({k_1\over\ga_1},\ldots,{k_m\over\ga_m}).
\end{equation}
Further, since $G/N$ is abelian, (\ref{G/Nabeliancdn}) gives
$$ {G^{(k_1,\ldots,k_m)}\over G^{(\ga_1,\dots,\ga_m)}} \hbox{ is abelian} \Iff
{|G^{(k_1,\ldots,k_m)}|\over|G^{(\ga_1,\dots,\ga_m)}|}
={k_1 k_2\cdots k_m\over\ga_1\ga_2\cdots\ga_m}=\lcm({k_1\over\ga_1},\ldots,{k_m\over\ga_m}), 
$$ 
which is (\ref{quotcyclicCondition}).
Conversely, if (\ref{quotcyclicCondition}) holds, then we must have equality in 
$${|G^{(k_1,\ldots,k_m)}|\over|G^{(\ga_1,\dots,\ga_m)}|}
\ge {k_1 k_2\cdots k_m\over\ga_1\ga_2\cdots\ga_m}=\lcm({k_1\over\ga_1},\ldots,{k_m\over\ga_m}), $$
so that (\ref{G/Nabeliancdn}) gives that $G/N$ is abelian, 
and (\ref{G/Ncyclic-cdn}) that it is cyclic.
\end{proof}

From the proof of Theorem \ref{QuotientsTheorem}, we have the inequalities
\begin{equation}
\label{quotientinequalities}
{|G^{(k_1,\ldots,k_m)}|\over |G^{(\ga_1,\ldots,\ga_m)}|}
\ge {k_1 k_2\cdots k_m\over\ga_1\ga_2\cdots\ga_m}
\ge \lcm({k_1\over\ga_1},\ldots,{k_m\over\ga_m}),
\end{equation}
where there is equality in the first if and only if $G/N$ is abelian,
and equality in the second if and only if $G/N$ is cyclic.

\begin{example}
\label{quotientineqexample}
Here we illustrate the cases of equality in (\ref{quotientinequalities}) for
$G^{(2,2,{m\over p})}=G(m,p,2)$, $p$ even (three orbits),
of Example \ref{G(m,p,2)quotExample}. From the group orders, we have
\begin{align*}
{|G^{(2,2,{m\over p})}|\over|G^{(1,1,{m\over q})}|}
={2m(m/p)\over(m/q)^2}=2q{q\over p}>{2\cdot2\cdot (m/p)\over1\cdot1\cdot (m/q)}
	=4{q\over p},\ q\ne 2
& \Implies \hbox{$G/N$ is not abelian}, \cr
{|G^{(2,2,{m\over p})}|\over|G^{(2,2,{m\over q})}|}
={2m(m/p)\over2m(m/q)}={q\over p}=\lcm\bigl({2\over2},{2\over2},{m/p\over m/q}\bigr)
	&\Implies \hbox{$G/N$ is cyclic}, \cr
{|G^{(2,2,{m\over p})}|\over|G^{(1,2,{m\over q})}|}
={2m(m/p)\over2(m/2){m/2\over q/2}}=2{q\over p}={2\cdot2\cdot (m/p)\over1\cdot2\cdot (m/q)}
&\Implies \hbox{$G/N$ is abelian}.
\end{align*}
For $q=2$ in the first case above, 
	there is equality in the strict inequality,
and we obtain an 
	abelian quotient, i.e.,
$$ {G^{(2,2,{m\over 2})}\over G^{(1,1,{m\over 2})}}
	={G(m,2,2)\over C_{m\over2}\times C_{m\over2}} = C_2\times C_2. $$
For the last case above, $G/N=C_2\times C_{q\over p}$, and we can check
	(\ref{quotcyclicCondition}) using 
$$ \lcm\bigl({2\over1},{2\over2},{m/p\over m/q}\bigr)=
\begin{cases}
  2{q\over p}, & \hbox{$q\over p$ odd}, \cr
  {q\over p}, & \hbox{$q\over p$ even}; 
\end{cases}
$$
to conclude that $G/N$ is cyclic when ${q\over p}$ is odd.
\end{example}

It is natural to consider to what extent (\ref{quotcyclicCondition}) determines
the collineation preserving normal subgroups. 

\begin{example} 
\label{G/NcyclicNotcollpreserving}
If $G/N$ is cyclic, i.e., (\ref{quotcyclicCondition}) holds,
then $N$ is collineation preserving, except for the cases
$$ {G_{10}\over G_5} \cong C_4, \quad 
{G_{10}\over G_7} \cong C_2, \quad 
{G_{14}\over G_5} \cong C_2, \quad 
{G_{15}\over G_7} \cong C_2, $$
	$$ {G(m,p,2)\over G({m\over2},{q\over2},2)}\cong C_{2{q\over p}} \quad
	\hbox{(${p\over2}$ even, ${q\over p}$ odd)},  \quad
 {G(m,p,d)\over G(m,q,d)}\cong C_{q\over p} , \quad
	\gcd(p,d)\ne\gcd(q,d). $$
This follows by direct calculation of the size of the collineation groups
of $G$ and $N$.
\end{example}

Heuristically, 
the normal reflection subgroups $N=G^{(\ga_1,\ldots,g_m)}\ne 1$ of $G=^{(k_1,\ldots,k_m)}$
have a hierarchy of ``size'' given by 
\begin{equation}
\label{heuristicsize}
\hbox{primitive,\quad imprimitive,\quad reducible\ (abelian) \qquad (large to small), }
\end{equation}
with the quotients $G/N$ being of opposite size. For example, we have
\begin{itemize}
\item If $N$ is primitive, 
then  $G/N\cong C_{k_1\over\ga_1}\times\cdots\times C_{k_m\over\ga_m}$ is reducible.
\item If $N$ is imprimitive, then $G/N$ is imprimitive or reducible, except for the one case
	$\displaystyle {G_{26}\over G(3,3,3)} \cong G_4$.
\item If $N$ is reducible, then $G$ and $G/N$ are imprimitive, i.e.,
$\displaystyle {G(m,p,d)\over(C_{m\over q})^d} \cong G(m,q,d)$. 
\end{itemize}
In the same vein, we also have
\begin{itemize}
\item If $N$ is a collineation preserving subgroup of $G$, then
$G/N$ is reducible (cyclic).
\end{itemize}
We now consider when a normal reflection subgroup appears multiple times.

For a complex reflection group $G=G^{(k_1,\ldots,k_m)}$ of reflection type
$n_1C_{k_1},\ldots,n_mC_{k_m}$, we consider the
subgroup of permutations in $S_m$ which preserve the reflection type,
i.e., the stabiliser of $(k_1,\ldots,k_m)$ and $(n_1,\ldots,n_m)$.
This group $\Stab_{S_m}(k,n)$ acts on the normal reflection 
subgroups $N=G^{(\ga_1,\ldots,\ga_m)}$, with the orbits being natural 
candidates for isomorphic normal subgroups.
We now consider the cases when this group is nontrivial.

\begin{example}
\label{multinormalreftype}
We consider those reflection groups $G=G^{(k_1,\ldots,k_m)}$ for which $\Stab_{S_m}(k,n)$ 
is nontrivial, and determine the corresponding orbits containing more than one normal 
reflection subgroup.  For the primitive complex reflection groups, 
there are three cases
\begin{align*}
G_5=G^{(3,3)}: \qquad & G^{(1,3)},G^{(3,1)}\cong G_4\lhd G_5, \cr
G_7=G^{(2,3,3)}: \qquad & G^{(2,1,3)},G^{(2,3,1)}\cong G_6\lhd G_7, \cr
 & G^{(1,1,3)},G^{(1,3,1)}\cong G_4\lhd G_7, \cr
	G_{28}=G^{(2,2)}: \qquad & G^{(1,2)},G^{(2,1)}\cong G(2,2,4)\lhd G_{28}.
\end{align*} 
As an example, for $G_7=G^{(2,3,3)}$ above, we have 
$\Stab_{S_3}((2,3,3),(6,4,4)) =\{(\,),(2\, 3)\}$.

For the imprimitive complex reflection groups $G(m,p,d)$,
by Example \ref{G(m,p,2)quotExample},
there are three reflection orbits of the same type if and only if 
${m\over2}=2$, ${m\over p}=2$, which gives
$$ G(4,2,2)=G^{(2,2,2)}: \qquad  G^{(1,2,2)},G^{(2,1,2)},G^{(2,2,1)}\cong 
G(4,4,2)\cong G(2,1,2)\lhd G(4,2,2), $$
$$ G(4,2,2)=G^{(2,2,2)}: \qquad  G^{(1,1,2)},G^{(1,2,1)},G^{(2,1,1)}\cong 
G(2,2,2)\cong C_2\times C_2\lhd G(4,2,2), $$
and there are two when
$$ G(m,p,2)=G^{(2,2,{m\over p})}, \ \hbox{$p$ even}:\qquad
G^{(1,2,{m\over q})},G^{(2,1,{m\over q})}
\cong G({m\over2},{q\over2},2)\lhd G(m,p,2), $$
where $p\divides q$ and  $q\divides m$. There are no other cases.
\end{example}




We now show that Example \ref{multinormalreftype} gives all the cases
where there is a repeated normal reflection subgroup.

\begin{theorem}
The normal reflection subgroups $G^{\ga}=G^{(\ga_1,\ldots,\ga_m)}$
and $G^{\gb}=G^{(\gb_1,\ldots,\gb_m)}$ of the complex reflection 
	group $G=G^{(k_1,\ldots,k_m)}$ are isomorphic if and only if
	$\gb=\gs \ga$ for some permutation $\gs\in S_m$ which fixes $(k_1,\ldots,k_m)$.

The 
	cases where a reflection group appears multiple
times as a normal subgroup are
\begin{enumerate}[\rm (i)]
\item \ $G_4\lhd G_5$, \quad $G_6\lhd G_7$, \quad $G_4\lhd G_7$, \quad
        $G(2,2,4)\lhd G_{28}$\quad (each appears twice).
\item \ $G(4,4,2)\cong G(2,1,2)\lhd G(4,2,2)$, \quad
$G(2,2,2)\cong C_2\times C_2\lhd G(4,2,2)$ \ (three times).
\item \ $G({m\over2},{q\over2},2)\lhd G(m,p,2)\ne G(4,2,2)$, \quad
$p$ even, $p\divides q$, $q\divides m$ \quad (each appears twice).
\end{enumerate}
\end{theorem}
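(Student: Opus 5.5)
The plan has two halves: an invariant that separates non-isomorphic subgroups, and a case check over the finitely many group types where the stabiliser is nontrivial.

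\textbf{Necessity via invariants.} I would first attach to each $G^\ga$ data that any abstract isomorphism must preserve.

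The first invariant is the order, which is read off from the group orders and reflection types. The second is the abelianisation $(G^\ga)_{\rm ab}$; by Lemma \ref{abelianisationlemma} and Corollary \ref{abeliansplitorbitscor} it equals $C_{\ga_1}^{\ell_1}\times\cdots\times C_{\ga_m}^{\ell_m}$, where $\ell_j$ is the number of pieces into which the $j$-th orbit splits. The third is the (ST-type of the) reflection group $G^\ga$ itself. For the isomorphism to be one of reflection groups, as the statement intends by comparing ST-types, this is the same as the multiset of reflection orbit types $\{n_j/\ell_j\,C_{\ga_j}\}$ read from the domino description of \S\ref{domino-sect}.

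From the reflection type of $G^\ga$ one recovers the multiset of pairs $(n_j,\ga_j)$ for $\ga_j\neq1$, since a split orbit still occupies all $n_j$ columns. Using the orbit sizes $n_j$, I would argue that $\ga$ and $\gb$ agree up to a permutation $\gs$ preserving $(k_j,n_j)$, i.e.\ $\gs\in\Stab_{S_m}(k,n)$. This gives the ``only if'' direction, modulo a check that distinct orbits with equal $(k_j,n_j)$ cannot be confused with orbits having different $k_j$.

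\textbf{Sufficiency and the list.} The converse direction needs an actual isomorphism whenever $\gb=\gs\ga$ with $\gs$ in the stabiliser. For the groups listed this is witnessed explicitly by the identifications already made in Example \ref{multinormalreftype}: $G_4,G_6,G(2,2,4),G(4,4,2),G(2,2,2),G(\frac m2,\frac q2,2)$ arise for each index in the $\Stab$-orbit.

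I would therefore enumerate the groups with nontrivial $\Stab_{S_m}(k,n)$.

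For the primitive groups, I would scan Tables \ref{GjTablerank2} and \ref{GjTablerankdge3} for repeated pairs $(n_j,k_j)$. Only $G_5$ ($4C_3,4C_3$), $G_7$ ($4C_3,4C_3$) and $G_{28}$ ($12C_2,12C_2$) have them. I also need to note that $G_{13}$ ($12C_2,6C_2$) and $G_{15}$ are excluded because their $n_j$ differ.

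For the imprimitive groups, Lemma \ref{G(m,p,d)-orbits} and Table \ref{G(m,p,d)TableOrbits} show that $d\ge3$ never has repeats. For $d=2$, repeats occur only for $p$ even, namely the two $\frac m2C_2$ orbits, together with the case $\frac m2=2=\frac mp$, i.e.\ $G(4,2,2)$, where all three orbits have type $2C_2$.

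Listing the $\Stab$-orbits of size $>1$ then gives (i)--(iii) exactly, as in Example \ref{multinormalreftype}. Case (iii) excludes $G(4,2,2)$ since it belongs to (ii). Subgroups in a size-one orbit (e.g.\ $G^{(1,1,\frac mq)}$) appear once, and these are distinguished by the invariants above.

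\textbf{Main obstacle.} The delicate step is the ``only if'' direction when a split orbit occurs. There $G^\ga$ and $G^\gb$ could a priori be isomorphic without matching labels, for instance $G(4,2,2)=G_{11}^{(1,2,1)}$ versus some other label, or $G_{11}^{(1,1,3)}=G_5$ versus the primitive groups with equal orders.

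I would handle this by brute comparison within each of the finitely many maximal groups using the lattices of Figures \ref{G11-lattice}, \ref{G7-lattice}, \ref{G19-lattice}, \ref{Maximal2orbits}, \ref{G2p22-lattice}. In every case the ST-labels shown there already separate labels outside a common $\Stab$-orbit. For the infinite family, Example \ref{G(m,p,2)quotExample} shows the subgroup labels determine $(m',p')$ of $G(m',p',2)$ through order and number of orbits.
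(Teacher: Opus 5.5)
Your proposal is correct and takes essentially the paper's route. Equality of reflection types gives $\gb=\gs\ga$ with $\gs\in\Stab_{S_m}(k,n)$, and the repeated subgroups are then read off from Example \ref{multinormalreftype}. The $n_j$ must be fixed as well as the $k_j$, as your $G_{13}$ remark shows, and this is what the paper's proof actually concludes. Your enumeration overlooks $G(2,1,2)$, of type $2C_2,2C_2$, but it is conjugate to $G(4,4,2)$ and so is already covered by (iii).

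Your explicit lattice-by-lattice treatment of split orbits is more careful than the paper. Its proof tacitly takes the reflection type of $G^\ga$ to be $n_1C_{\ga_1},\ldots,n_mC_{\ga_m}$ and settles $k_{\gs j}=k_j$ ``by inspection''.
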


\begin{proof} For normal reflection subgroups to be isomorphic,
	equivalently conjugate in the general linear group (having the
	same character), they 
must have the same reflection type, i.e.,  
	one must have $\gb=\gs \ga=(\ga_{\gs j})$, for some $\gs\in S_m$ where
$$ n_j C_{\ga_j}= n_{\gs j} C_{\ga_{\gs j}}
\Implies n_{\gs j}=n_j, \quad \ga_j=\ga_{\gs j}. $$

There are just two cases where some $k_j$ is composite: $G_{11}$ and 
its normal subgroups $G_8$, $G_9$, $G_{10}$, and $G(m,p,d)$ for ${m\over p}$ 
composite. In each of these cases, the orbit length $n_j$ appears only once, and
so $n_{\gs j}=n_j$ gives $\gs j=j$ and hence $k_{\gs j}=k_j$.

If $k_j$ is prime, then we can have either
$$ \ga_j=k_j \Implies k_j=\ga_j=\ga_{\gs j}=k_{\gs j}, $$
or $\ga_j=1$, in which case there is no restriction on $\gs j$ other than
$n_{\gs j}=n_j$, which implies that $k_j=k_{\gs j}$ (by inspection).
	Hence $\gs\in\Stab_{S_m}(k,n)$, and 
therefore all the repeated normal subgroups are given by Example \ref{multinormalreftype}.
\end{proof}

This gives the Theorem 6.2 of \cite{AW23}, with (1),(2) corresponding
to (ii), (3) to (iii) and (4),(5) to (i), except for the following cases
which are missing
$$ G_6\lhd G_7, \quad G_4\lhd G_7, \quad \hbox{(each appears twice)}. $$

One might hope that the lattice of reflection groups obtained from that of the normal reflection
subgroups of $G$ by replacing $N$ by $G/N$
(see Figure \ref{G7-lattice-quotients}) might give an ``upside down'' lattice of reflection subgroups of $G$,
as the orders of $|G/N|$ would allow.
However, this appears not to be the case, 
as  $G_7$ has no reflection subgroups (normal or not) of
 order $6$, and so the quotient $C_2\times C_3=G_7/G_4$ cannot be on such a sublattice.
There is an inversion symmetry of the lattice of normal subgroups given by the correspondence 
\begin{equation}
\label{latticesymmetry}
G^\ga = G^{(\ga_1,\ldots,\ga_m)}
\longleftrightarrow G^{k/\ga}=G^{({k_1\over\ga_1},\ldots,{k_m\over\ga_m})}.
\end{equation}
This sets up a correspondence (duality) between normal reflection subgroups, e.g.,
\begin{align*}
G_7: & \qquad G_5\longleftrightarrow G(4,2,2), \quad
G_4\longleftrightarrow G_6, \cr
G_{11}: & \qquad
G_{15}\longleftrightarrow G(4,2,2), \quad
G_{9}\longleftrightarrow G_5, \quad
G_{10}\longleftrightarrow G_{12}, \quad
G_{14}\longleftrightarrow G_{8}, \quad
G_{13}\longleftrightarrow G_{7}, \cr
G_{19}: & \qquad
G_{21}\longleftrightarrow G_{16}, \quad
G_{9}\longleftrightarrow G_{5}, \quad
G_{13}\longleftrightarrow G_{7}.
\end{align*}
This duality flips the group's location in the ordering (\ref{heuristicsize}), e.g.,
for the pairs above $|G^\ga|\cdot|G^{k/\ga}|$ is constant, except for the 
pair $G_{15}\longleftrightarrow G(4,2,2)$, which is the only case where one of the
groups is not the intersection of those above it in the lattice,
namely $G(4,2,2)$.
Similarly, by (\ref{abelianisationcdn}), $|(G^\ga)_{\rm ab}|\cdot|(G^{k/\ga})_{\rm ab}|$ is constant, equal to $|G_{\rm ab}|$,
when the groups are collineation preserving (have no split orbits).


\begin{figure}[H]
\begin{center}
\begin{tikzpicture}
    \matrix (A) [matrix of nodes, row sep=1.0cm, column sep = -1.1 cm]
    {
 &&& ${G_7\over G_7}={G^{(2,3,3)}\over G^{(2,3,3)}}=1$ &&& \\
	& ${G^{(2,3,3)}\over G^{(1,3,3)}}= C_2$ && ${G^{(2,3,3)}\over G^{(2,1,3)}}=C_3$ && ${G^{(2,3,3)}\over G^{(2,3,1)}}=C_3$ & \\
	&& ${G^{(2,3,3)}\over G^{(1,1,3)}}=C_2\times C_3$ && \greybox{${G^{(2,3,3)}\over G^{(2,1,1)}}=C_3\times C_3$} && ${G^{(2,3,3)}\over G^{(1,3,1)}}=C_2\times C_3$ \\
	& && \greybox{${G^{(2,3,3)}\over G^{(1,1,1)}}=G_7$} &&& \\
    };
    \draw (A-1-4)--(A-2-2);
    \draw (A-1-4)--(A-2-4);
    \draw (A-1-4)--(A-2-6);

    \draw (A-2-2)--(A-3-3);
    \draw (A-2-4)--(A-3-3);
    \draw (A-2-4)--(A-3-5);
    \draw (A-2-6)--(A-3-5);
    \draw (A-2-6)--(A-3-7);
    \draw (A-2-2)--(A-3-7);

    \draw (A-3-3)--(A-4-4);
    \draw (A-3-5)--(A-4-4);
    \draw (A-3-7)--(A-4-4);

\end{tikzpicture}
\end{center}
\caption{\label{G7-lattice-quotients} 
The quotient reflection groups $G/N$ for the normal reflection subgroups 
$N=G^\ga$ of $G=G_{7}=G^{(2,3,3)}$, 
arranged as in the lattice of 
	Figure \ref{G7-lattice}. These all happen to be abelian, 
	and hence equal to the abelianisation of $G/N$, for $N\ne1$.}
\end{figure}

\section{Quaternionic reflection groups}

Here we consider some examples of the normal reflection subgroups of the quaternionic 
reflection groups. The main observations are
\begin{itemize}
\item The labels (\ref{Quaternionsubgrouplabels}) giving the normal reflection subgroups
may not be unique (see Examples \ref{Pgroups-example} and \ref{typeOexample}),
		 as they are for complex reflection groups  (Theorem \ref{Galatticethm}).
Indeed, the reflections on one orbit may generate reflections on a different orbit.
\item The stabiliser of a reflection subgroup $R_a$ may not pointwise 
	stabilise it, as is the case for complex reflection groups 
(Theorem \ref{RaStabilising}).
\item The abelianisation (which can be trivial) gives limited insight 
into the reflection subgroups for the roots, especially for primitive
reflection groups
(see Table \ref{PrimitiveQuatOrbits}).
\end{itemize}
We have not calculated all the possible multiple labels, and hence all the 
normal reflection subgroups, though this has been done for the
primitive quaternionic reflection groups.


\begin{example}
\label{Pgroups-example}
We consider the primitive quaternionic reflection groups of type $P$ in \cite{C80},
which are given by (see \cite{BW26}) 
\begin{align*}	
P_1 & := \inpro{\pmat{i&0\cr0&1}, {1\over2} \pmat{1+j&-1+j\cr-1+j&1+j}}, 
	& |P_1|=320,\  \\
P_2 & := \inpro{ \pmat{i&0\cr0&1}, \pmat{j&0\cr0&1}, {1\over2} \pmat{1+j&-1+j\cr-1+j&1+j}}, 
	& |P_2|=1920, \\
P_3 & := \inpro{ \pmat{i&0\cr0&1},\pmat{j&0\cr0&1},{1\over\sqrt{2}}\pmat{1&1\cr1&-1}}, 
	& |P_3|=3840.
\end{align*}
with $P_1\subset P_2\subset P_3$
The group $P_3$ has $50$ reflections of order two and $60$ of order four. 
The reflections of order four are all conjugate (and also conjugate in $P_2$),
and correspond to $10$ root lines, giving a $10Q_8$ orbit
(containing $10$ reflections of order two), where 
$$ Q_8=\{1,i,j,k,-1,-i,-j,-k\}, $$
The remaining $40$ reflections of order two form a single $40C_2$ orbit. 
The quaternion group $Q_8$ is Hamiltonian, 
i.e., is nonabelian with all of its six subgroups being normal. Hence, there are
$12$ labels of the form (\ref{Quaternionsubgrouplabels})
giving normal subgroups of $G=G^{(C_2,Q_8)}=P_3$. 
The subgroups corresponding to these labels are
\begin{align*}
P_3 &=G^{(C_2,Q_8)}=G^{(C_2,\inpro{i})}=G^{(C_2,\inpro{j})}=G^{(C_2,\inpro{k})}
=G^{(C_2,\inpro{-1})} =G^{(C_2,1)}\quad\hbox{($6$ labels)}, \cr
P_2 &=G^{(1,Q_8)}=G^{(1,\inpro{i})}=G^{(1,\inpro{j})}=G^{(1,\inpro{k})}
\quad\hbox{($4$ labels)}, \cr
K &=G^{(1,\inpro{-1})} \quad\hbox{($1$ label)}, \cr
1 &=G^{(1,1)} \quad\hbox{($1$ label)},
\end{align*}
where $K\subset P_1$ is the imprimitive quaternionic reflection group
$$ K:=\inpro{\pmat{-1&0\cr0&1}, \pmat{0&1\cr1&0},\pmat{0&i\cr-i&0}, \pmat{0&j\cr-j&0}
}, \qquad |K|=32. $$
Thus, $P_3$ has four normal reflection subgroups.
	
Since $P_2$ has a single $10Q_8$ orbit, it has, similarly, 
three normal reflection subgroups, i.e., $1$, $K$, $P_2$ ($4$ labels).
Since $G=G^{(C_4)}=P_1$ has a single $10C_4$ orbit, it has 
three normal reflection subgroups $P_1$, $G^{(C_2)}=K$, $G^{(1)}=1$,
with unique labels.

The reflection type of the imprimitive reflection group $K$ is $2C_2,2C_2,2C_2,2C_2,2C_2$, and 
so there are $32=2^5$ labels for $K=K^{(2,2,2,2,2)}$ 
(here we adopt the notation (\ref{Quaternionsubgrouplabels})
used in the complex case, since the reflection type
consists only of cyclic groups). Let $n_\ga$ be the number of $2$'s in the 
index $\ga$ for a normal reflection subgroup $K^\ga$ of $K$.
The group $K$ has six labels, for $n_\ga=5,4$, i.e.,
$$ K=K^{(2,2,2,2,2)}
=K^{(1,2,2,2,2)}
=K^{(2,1,2,2,2)}
=K^{(2,2,1,2,2)}
=K^{(2,2,2,1,2)}
=K^{(2,2,2,2,1)} \quad \hbox{($6$ labels)}, $$
and the remaining $26$ labels give distinct normal subgroups, namely
\begin{align*}
& K^\ga, \quad n_\ga=3, \quad\hbox{($10$ isomorphic groups)}, 
	& K^\ga, \quad n_\ga=2, \quad\hbox{($10$ isomorphic groups)},  \cr
& K^\ga, \quad n_\ga=1, \quad\hbox{($5$ isomorphic groups)}, 
& K^{(1,1,1,1,1)}=1 \quad  \hbox{($n_\ga=0$)}. \hskip2.7truecm
\end{align*}
Thus $K$ has $27$ 
normal reflection subgroups. We note that $K$ has $68$ normal subgroups
and five subgroups which are not normal (the reflection subgroups
$R_a$ for its roots). 
\end{example}
	
We observe that 
\begin{itemize}
\item There need not be a unique label (\ref{Quaternionsubgrouplabels})
for quaternionic reflection groups, with the number of labels being an
upper bound for the number of normal reflection subgroups.
Indeed, there is a lattice homomorphism from the lattice of labels onto the
		lattice of normal reflection subgroups.
\item In particular, some quaternionic reflection groups can be generated by a set 
of reflections which does not contain at least one reflection from each orbit, 
i.e., Lemma \ref{abelianisationlemma} does not hold for quaternionic reflection groups.
\end{itemize}

The analogue of the abelianisation of Lemma \ref{abelianisationlemma} 
for a quaternionic reflection group $G$ with reflection type
$n_1 H_{a_1},n_2 H_{a_2},\ldots, n_m H_{a_m}$ 
would be the existence of a surjective homomorphism
\begin{equation}
\label{quatabelianisationform}
G\to H_{a_1}\times H_{a_2}\times \ldots\times H_{a_m},
\end{equation}
which implies that a generating set of reflections for $G$ must 
contain a reflection from each reflection orbit.
By considering the normal subgroups of $G$, 
it is easy see that this is not possible for the groups of Example \ref{Pgroups-example}.

\begin{example} 
For the quaternionic reflection groups $G$ in Example \ref{Pgroups-example},
the groups $H_{a_1}\times \ldots\times H_{a_m}$ of 
(\ref{quatabelianisationform}), 
the possible orders of nontrivial homomorphic images of $G$,
and the abelianisations of $G$ are
$$ \begin{array}{llll}
C_2\times Q_8, & \quad Q_8, & \quad C_4, & \quad C_2\times C_2\times C_2 \times C_2\times C_2, \cr
2, 120, 1920, & \quad 60, 960, & \quad 2, 10, 160, & \quad 2, 4, 8, 16, \cr
(P_3)_{\rm ab} = C_2, & \quad (P_2)_{\rm ab} = 1, & \quad (P_1)_{\rm ab} = C_2, 
& \quad K_{\rm ab}=C_2\times C_2\times C_2 \times C_2,
	\end{array} $$
and so we conclude that there is no surjective homomorphism
of the form (\ref{quatabelianisationform}) for any of these groups.
We observe, however, that the abelianisation of $K$ maps each of its five
$2C_2$ reflection orbits to a different subgroup of order $2$ 
(of which there are $15$). Thus, we can conclude that a set of reflections
which generates $K$ must contain a reflection from four of its five
	reflection orbits. In \cite{BW26}, it is observed that all such 
	sets of four reflections do generate $K$, which leads to the six
	labels for $K$ in Example \ref{Pgroups-example}.
\end{example}

The reflection groups $P_1,P_2,P_3,K$ have centre $\inpro{-1}$, 
and hence have no hidden reflections and have different collineation groups. 
In particular, we observe that
\begin{itemize}
\item The normal reflection subgroup $P_2$ of $P_3$ has reflection type $10Q_8$ 
and so does not have a split orbit, despite it not being collineation preserving. \\
Therefore, Theorem \ref{splitorbitcollineation} does not hold for quaternionic reflection groups.
\end{itemize}

There are also multiple labels for the
quaternionic reflection groups of type $O$.

\begin{example} 
\label{typeOexample}
We consider the primitive quaternionic reflection groups of type $O$ given in \cite{C80}.
In \cite{W24}, these are given by
$$
H_{120}=O_1=\langle r_1,r_2,r_3 \rangle, \quad
H_{720}=O_2=\langle r_1,r_2,r_4 \rangle, \quad
H_{1440}=O_3=\langle r_1,r_2,r_5 \rangle,
$$ 
where the first index is the order of the group, and the 
	reflections $r_1,\ldots,r_5$ are
$$ r_1=\pmat{1&0\cr0&-{1\over2}+{\sqrt{3}\over2}i}, \quad
r_2=\pmat{{1\over\sqrt{3}}i&{\sqrt{2}\over\sqrt{3}}\cr
{1\over\sqrt{6}}-{1\over\sqrt{2}}i&{1\over2}+{1\over2\sqrt{3}}i}, \quad
r_3=\pmat{{1\over2}-{1\over2\sqrt{3}}j&{\sqrt{2}\over\sqrt{3}}k\cr
{1\over\sqrt{2}}i-{1\over\sqrt{6}}k&{1\over\sqrt{3}}j}, $$
$$ r_4=\pmat{-{1\over2}+{\sqrt{3}\over2}j&0\cr0&1}, \quad
r_5=\pmat{0&-{1\over\sqrt{2}}-{1\over\sqrt{2}}k\cr-{1\over\sqrt{2}}+{1\over\sqrt{2}}k&0}.
$$
Since $H_{1440}$ has reflection type $20C_3,30C_2$, it has 
normal reflection subgroups
$$ H_{1440}=G^{(C_2,C_3)}=G^{(C_2,1)} \quad \hbox{($2$ labels)}, \qquad
H_{720}=G^{(1,C_3)}, \qquad 1=G^{(1,1)}. $$
The reflection types of $H_{720}$ and $H_{120}$ are $20C_3$ and $10C_3$,
so they have no nontrivial normal reflection subgroups.
These groups have centre $\inpro{-1}$, which is their only other normal subgroup, 
and so, in particular, $H_{720}$ is
not a collineation preserving normal subgroup of $H_{1440}$
(which does not have a split orbit).
The abelianisations of these groups are
$$ (H_{120})_{\rm ab}=1, \quad (H_{720})_{\rm ab}=1, \quad (H_{1440})_{\rm ab}=C_2.  $$
These abelianisations map $r_1,r_2,r_3,r_4$ to the identity
and $r_5$ to the element of order $2$.
\end{example}

Unlike the complex case, for a quaternionic reflection group $G$, 
it is possible to have a proper subgroup $\hat R_{a_j}$ of the reflection subgroup 
$R_{a_j}$ for a root $a_j$, with
\begin{equation}
\label{multilabelRa}
\hat R_{a_j}^G = R_{a_j}^G, \qquad \hat R_{a_j} \ne R_{a_j}, 
\end{equation}
which leads to multiple labels.

We now explain this phenomenon in terms of the conjugation action of $G$ on $R_{a_j}$.


\begin{theorem}
\label{RaStabilising}
Let $G$ be a reflection group, $R_a$ be the reflection subgroup for a root $a$, 
and $\Stab_G(R_a)=\{g\in G:gR_ag^{-1}=R_a\}$ be the stabiliser of $R_a$. Then 
\begin{enumerate}[\rm (i)]
\item If $G$ is a complex reflection group or $|R_a|=2$, 
then the stabiliser of $R_a$ pointwise stabilises $R_a$, i.e.,
$gr_{a,\xi}g^{-1}=r_{a,\xi}$, $\forall r_{a,\xi}\in R_a$, $\forall g\in\Stab_G(R_a)$.
\item If $G$ is a quaternionic reflection group and $|R_a|\ne2$, 
then there may be reflections in $R_a$ of order greater than $2$ which are not
	fixed by the stabiliser of $R_a$.
\end{enumerate}
	As an example of {\rm(ii)}, all the reflections of order $4$ in $P_1,P_2,P_3$ are conjugate,
and all the reflections of order $3$ in $H_{120},H_{720},H_{1440}$ are conjugate.
\end{theorem}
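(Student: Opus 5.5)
The proof will rest on the conjugation formula (\ref{raxi-conjugation}) together with its quaternionic refinement. If $g\in\Stab_G(R_a)$, then $gR_ag^{-1}=R_{ga}=R_a$, so $ga$ spans the same root line as $a$, i.e. $ga=a\lambda$ for some unit scalar $\lambda$ (in fact $\lambda\in K_a$). We then compute, using (\ref{raxidefn}) and the unitarity of $g$,
$$ gr_{a,\xi}g^{-1}=I-{ga(1-\xi)(ga)^*\over\inpro{a,a}}=I-{a\lambda(1-\xi)\overline{\lambda}a^*\over\inpro{a,a}}=r_{a,\lambda\xi\overline{\lambda}}. $$
Thus the stabiliser acts on $R_a\cong H_a$ by conjugation of the unit scalar $\xi$ by $\lambda$. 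This single formula settles everything.

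For (i), first suppose $G$ is complex. Then $\lambda\in\CC$ commutes with $\xi$, so $\lambda\xi\overline{\lambda}=\xi$ and every element of $R_a$ is fixed. Next suppose instead that $|R_a|=2$. Then $R_a=\{I,r_{a,-1}\}$, and $-1$ is real, hence central in $\HH$, so $\lambda(-1)\overline{\lambda}=-1$. Alternatively, conjugation is an automorphism of the group of order $2$, which must be trivial. In fact the same argument shows that the reflections of order $2$ in any $R_a$ are always fixed, which is why (ii) concerns only orders greater than $2$.

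For (ii), it suffices to exhibit the stated examples. The plan is to show that in $P_1$ (and hence in $P_2$, $P_3$) the reflections $r_{a,i}$ and $r_{a,-i}$ (or $r_{a,j}$, etc.) with a common root are conjugate by an element $g$ with $ga=a\lambda$, where $\lambda$ does not commute with $i$. Taking $a=e_1$ and the reflection $\pmat{i&0\cr0&1}$, we look for a $g\in P_1$ stabilising the line $e_1\HH$ with $ge_1=e_1\lambda$, $\lambda\in\{j,k,\ldots\}$ or $\lambda$ of order $3$ cycling $i,j,k$. The paper already records that all order $4$ reflections in $P_3$ (a single $10Q_8$ orbit) are conjugate. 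Since there are $10$ root lines and $60$ order-$4$ reflections, each line carries $6$ of them. Transitivity on these $60$ reflections, combined with transitivity on the $10$ lines, forces the stabiliser of each line to act transitively on its $6$ order-$4$ reflections, so in particular it does not fix them. This counting (orbit–stabiliser) argument avoids exhibiting an explicit $g$.

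The same counting applies to $H_{1440}$ and its type $20C_3$ orbit: the order-$3$ reflections are all conjugate, and each line carries the two reflections $r_{a,\omega}$ and $r_{a,\overline{\omega}}$, which are therefore swapped by some stabilising element (here $\lambda$ anticommutes appropriately with the complex $\omega$). The main obstacle is justifying the conjugacy claims themselves, namely that all order-$4$ (respectively order-$3$) reflections lie in one $G$-conjugacy class rather than just one orbit of root lines. I would verify this by a direct Magma computation, or by exhibiting one explicit element, e.g. a generator of $P_1$ fixing $e_1\HH$ with $\lambda=j$, and then using the orbit count to conclude.
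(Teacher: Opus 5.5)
Your proposal is correct and follows the paper's proof. The formula $gr_{a,\xi}g^{-1}=r_{a,\lambda\xi\lambda^{-1}}$ for $g\in\Stab_G(R_a)$ with $ga=a\lambda$ gives (i) directly, since complex scalars commute and $-1$ is central in $\HH$; as in the paper, (ii) then rests on the computed conjugacy facts for $P_1,P_2,P_3$ and $H_{120},H_{720},H_{1440}$, and your orbit-counting remark usefully spells out the step the paper leaves implicit, namely that an element conjugating two reflections with the same root line must lie in $\Stab_G(R_a)$.
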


\begin{proof} If $g\in G$ conjugates the reflection $r_{a,\xi}$ to one with the 
same root, then (\ref{raxi-conjugation}) gives $ga=a\gb$, 
for some unit scalar $\gb\in K_a$, 
where $K_a$ is defined in (\ref {HaKadefn}), and
$$ gr_{a,\xi}g^{-1}=r_{ga,\xi}=r_{a\gb,\xi}=r_{a,\gb\xi\gb^{-1}},$$
which equals $r_{a,\xi}$ when $G$ is complex, since in this case $\gb\xi\gb^{-1}=\xi$,
or when $\xi=-1$, i.e., $r_{a,\xi}$ has order $2$.
For $G$ a quaternionic reflection group and $\xi\ne-1$, 
it may be that $\gb\xi\gb^{-1}\ne\xi$, which 
gives a different reflection of the same order for the root $a$.
We observe that $r_{a,\xi}$ and $r_{a,\xi}^{-1}=r_{a,\xi^{-1}}$ are reflections in $R_a$ 
of the same order, 
and that they are different if and only if their order is not $2$.

The Examples \ref{Pgroups-example} and \ref{typeOexample} give the listed 
examples, 
where $r_{a,\xi}\ne r_{a,\xi}^{-1}$ (of orders $4$ and $3$, respectively)
are conjugate by some $g\in\Stab_G(R_a)$.
\end{proof}

The geometric description of the reflection type in terms of dominos
(see Section \ref{domino-sect}) 
can be complicated for quaternionic reflection groups.

\begin{example} Let $G$ be $P_2$ or $P_3$, which have a $10Q_8$ orbit.
Let $R_a=\{r_{a,\xi}\}_{\xi\in Q_8}$ 
be the reflection subgroup for a given root $a$. 
Since this is not pointwise stabilised by $\Stab_G(R_a)$, 
it is natural to think of it as a set, rather than as a column (ordered set). 
The corresponding orbit gives an $8\times 10$ matrix $[R_{a}^{g_1},\ldots,R_a^{g_{10}}]$,
which partitions the $70$ reflections by their roots.
Now let $\hat R_a$ be the subgroup $R_{a,\inpro{i}}=\{r_{a,\xi}\}_{\xi\in\inpro{i}}$.
The orbit of this ``column of size $4$'' gives a $4\times 30$ matrix, which 
includes three columns $R_{a,\inpro{i}},R_{a,\inpro{j}},R_{a,\inpro{k}}$ for 
the root $a$, and similarly for the other roots. 
The total number of reflections in this matrix is $90=3\cdot 30$, with each 
reflection $r_{a,-1}$ appearing three times. Thus $\hat R_a=R_{a,\inpro{i}}$ should
be thought of as a domino with some caution.
\end{example}

The reflection types and attributes of the remaining primitive quaternionic
reflection groups of types $Q,R,S,T,U$ are tabulated in Table \ref{PrimitiveQuatOrbits}.
We give a quick overview.

\begin{example}
The primitive quaternionic reflection groups 
$$ Q,\, R \ \hbox{(rank $3$)}, \qquad S_1,\, S_2,\, S_3\ \hbox{(rank $4$)},\qquad
T,\,U \ \hbox{(rank $5$) }, $$
each have a single
$n_1 C_2$ orbit, and hence have no nontrivial normal reflection subgroups.
In this way, they can be thought of as being ``close to simple''. They all have
	centre $\inpro{-1}$, which gives a third normal subgroup. In the 
cases where the collineation group $G/\inpro{-1}$ is simple, we list it in
the final column of Table \ref{PrimitiveQuatOrbits}.
Moreover, all the proper normal subgroups are reflection free.
In particular, $Q$ and $U$ have a collineation preserving normal
	subgroup of index $2$ (with all the reflections being hidden),
	which is isomorphic to the collineation group (which is 
	$PUS_3(3)$ and $PSU_5(2)$, respectively).
\end{example}

We have not yet calculated all the normal reflection subgroups 
of the imprimitive quaternionic reflection groups. 
However, we have included the imprimitive group $K$ 
in Table \ref{PrimitiveQuatOrbits}, which seems to be indicative,
as for the imprimitive quaternionic reflection groups $G(n,a,b,r)$ of
\cite{W25}, for which $K=G(2,1,1,2)$, we have
$$ G(n,a,b,r)_{\rm ab}=
\begin{cases}
C_2\times C_2, & \hbox{${n\over a},{n\over b},r$ are odd}; \cr
	C_2\times C_2\times C_2\times C_2, & \hbox{${n\over a},{n\over b},r$ are even, $r\ne {2n\over ab}$}; \cr
C_2\times C_2\times C_2, & \hbox{otherwise}. \cr
\end{cases}
$$
We observe that for the particular case
$$ G(n,1,n,1)\cong_\HH G(2n,n,2), $$
we have the abelianisation given by (\ref{G(n,p,d)ab}), i.e.,
$$ G(2n,n,d)_{\rm ab} = C_2\times C_{2} \times C_{\gcd(2,n)}. $$

In all the cases calculated so far, it appears that 
the abelianisation of a quaternionic reflection group is an elementary $2$-group.

\begin{table}[H]
\caption{
\label{PrimitiveQuatOrbits}
The primitive quaternionic reflection groups $G$
(with the imprimitive group $K$ for comparision), giving 
the reflection type, abelianisation, centre, with the last 
	column giving the number of normal reflection subgroups and
	the number of normal subgroups (respectively).
All the normal subgroups are reflection free except for $G$.
For the groups $Q$ and $U$ there is index $2$ collineation
	preserving reflection free simple normal subgroup.}
\smallskip
\begin{center}
\begin{tabular}{ |  >{$}l<{$} | >{$}l<{$} | >{$}l<{$} | >{$}l<{$} | >{$}l<{$} | >{$}l<{$} |  >{$}l<{$} |  }
\hline
&&&&&&\\[-0.3cm]
\hbox{Rank} & G & \hbox{Order} & \hbox{Reflection type} & G_{\rm ab} &
	 Z(G) & \#\lhd \\[0.1cm]
\hline
&&&&&&\\[-0.4cm]
2 & O_1 & 120 & 10C_3 & 1 & \inpro{-1} & 2,\, 3\ A_5 \\
2 & O_2 & 720 & 20C_3 & 1 & \inpro{-1} & 2,\, 3\ S_6 \\
2 & O_3 & 1440 & 20C_3,30C_2 & C_2 & \inpro{-1} & 3,\, 4 \\
2 & P_1 & 320 & 10C_4 & C_2 & \inpro{-1} & 3,\, 5 \\
2 & P_2 & 1920 & 10Q_8 & 1 & \inpro{-1} & 3,\, 4 \\
2 & P_3 & 3840 & 10Q_8, 40C_2 & C_2 & \inpro{-1} & 4,\, 5 \\
&&&&&&\\[-0.4cm]
3 & Q & 12096 & 63C_2 & C_2 & \inpro{-1} & 2,\, 4\ PSU_3(3) \\
3 & R & 1209600 & 315C_2 & 1 & \inpro{-1} & 2,\, 3\ J_2 \\
&&&&&&\\[-0.4cm]
4 & S_1 & 6912 & 36C_2 & C_2 & \inpro{-1} & 2,\, 10 \\
4 & S_2 & 82944 & 72C_2 & C_2 & \inpro{-1} & 2,\, 7 \\
4 & S_3 & 3317760 & 180C_2 & 1 & \inpro{-1} & 2,\, 4  \\
&&&&&&\\[-0.4cm]
5 & T & 2592000 & 180C_2 & C_2 & \inpro{-1} & 2,\, 5 \\
	5 & U & 27371520 & 165C_2 & C_2 & \inpro{-1} & 2,\, 4\ PSU_5(2) \\ [0.1 cm]
\hline
&&&&&&\\[-0.4cm]
2 & K & 32 & 2C_2,2C_2,2C_2,2C_2,2C_2 & (C_2)^4 
& \inpro{-1} & 27,\, 68 \\ [0.1 cm]
\hline
\end{tabular}
\end{center}
\end{table}

\section{Concluding remarks}

Reflection groups $G$ are typically generated by a small number of the
reflections that they contain.
Thus, to find reflection subgroups naively, one could take a small set of
generators from the set of all reflections.
The method given here involves taking a larger, but structured subset
of generators, i.e., the reflection orbits $\hat R_{a_j}^G$ and more 
generally ``dominos'', then selecting a smaller subset of generators
(when desired). This 
gives a simple 
method to 
find the normal reflection subgroups and their properties
(Lemma \ref{normalrefslatticelemma}).

A key observation is that every reflection group is a normal collineation
preserving subgroup of its maximal reflection group, and so the it 
suffices to determine the maximal reflection groups (Theorem \ref{complexrefgpclass}). 
By combining the Theorems \ref{primitivemaximal} and \ref{imprimitivemaximal},
we have:

\begin{theorem} 
The maximal 
complex reflection groups are the primitive groups
$$ G_7,\ G_{11},\ G_{19},\ G_{23},\ G_{24},\ G_{26},\ldots,G_{37}, $$
and the imprimitive groups
\begin{align*}
\label{maximalimprimitive}
G(2a,2,2), & \qquad d=2, \quad a=2,3,4,\ldots,\cr
G(ka,k,d), & \qquad d\ge3, \quad k\divides d, \quad a=1,2,3,\ldots,
\end{align*}
with each of these groups corresponding to a unique collineation group.
\end{theorem}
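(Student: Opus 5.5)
The final theorem is obtained by combining Theorems \ref{primitivemaximal} and \ref{imprimitivemaximal}, using the Shephard--Todd classification. Every irreducible complex reflection group is either primitive (one of $G_4,\ldots,G_{37}$) or imprimitive (some $G(m,p,d)$ other than $G(1,1,d)$). Theorem \ref{primitivemaximal} lists the primitive ones that are maximal. Theorem \ref{imprimitivemaximal} lists the imprimitive ones that are maximal. Since a group and its maximal reflection group share the same matrices up to scalars, primitivity is preserved on passing to the maximal reflection group, so the union of the two lists is the complete list of maximal irreducible complex reflection groups. The rank one groups $C_m$ are maximal by Example \ref{Maximalonedim}; I would either note them separately or treat them as implicitly excluded, as the paper does.

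For the assertion that each listed group corresponds to a unique collineation group, I would argue in two directions.

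\textbf{Each collineation group determines one maximal group.} Let $G$ be maximal. Then $G$ contains every scalar multiple $\lambda g$ ($g\in G$) that is a reflection. So $G$ is exactly the group generated by all reflections whose collineations lie in $G_{\rm coll}$, viewing $G_{\rm coll}$ as a set of matrices up to scalars. Hence $G$ is determined by $G_{\rm coll}$.

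This rests on two facts. First, for $d\ge3$ the reflection given by a hidden reflection is unique. Second, for $d=2$ both reflection multiples are included, because the maximal reflection group contains the scalar $\lambda_2 I$ and hence contains both. Consequently, two maximal groups with the same collineation matrices coincide. This is essentially the uniqueness already implicit in Lemma \ref{normalsgmaxreflectgp}.

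\textbf{Distinct listed groups have distinct collineation groups.} I would check this on the listed data.

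\begin{itemize}
\item \emph{Primitive groups.} The proof of Theorem \ref{primitivemaximal} already observes that the collineation group orders are distinct, except for the pairs $G_{19}, G_{23}$ and $G_{32}, G_{33}$. These pairs have different ranks, so their collineation groups are not the same.
\item \emph{Imprimitive groups.} Formula (\ref{Gmpdcollinorder}) gives $|G(ka,k,d)_{\rm coll}|=d!(ka)^{d-1}/k$. For fixed $d$ the distinct pairs $(k,a)$ could in principle collide in this order. I expect this to be the main obstacle. The order alone does not separate such a collision, but the group $G$ itself does, since it is recovered from $G_{\rm coll}$ by the uniqueness argument above. So the real content is only that the listed parameters give distinct groups. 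I would confirm this by comparing reflection types from Lemma \ref{G(m,p,d)-orbits}.
\item \emph{Primitive versus imprimitive.} Two such groups cannot share a collineation group, because primitivity is a property of the collineation matrices.
\end{itemize}
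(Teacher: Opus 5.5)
Your proposal is correct. For the list itself it is the paper's proof, which simply combines Theorems \ref{primitivemaximal} and \ref{imprimitivemaximal}.

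Where you go beyond the paper is the clause about a unique collineation group, which the paper never argues separately. The paper leans on Lemma \ref{normalsgmaxreflectgp} and the remarks before it. It also uses the check in the proof of Theorem \ref{primitivemaximal} that the maximal primitive groups have distinct collineation group orders, apart from $G_{19},G_{23}$ and $G_{32},G_{33}$, which differ in rank.

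You argue structurally instead. A maximal group is generated by all reflections whose collineations lie in $G_{\rm coll}$, so it is recovered from its collineation group. The clause then reduces to the listed groups being pairwise non-conjugate. This buys three things:
\begin{enumerate}[(i)]
\item it is uniform;
\item it covers the imprimitive family, which the paper's order check does not touch;
\item it makes clear that the clause concerns collineation groups as projective linear groups, since $G_{19},G_{23}$ and likewise $G_{32},G_{33}$ have abstractly isomorphic collineation groups.
\end{enumerate}
The paper's route, in exchange, gives a concrete numerical invariant (order together with rank) separating the primitive cases.

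Two points need repair.

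\textbf{Rank $2$.} Your justification does not cover a reflection $g\in G$ with eigenvalues $\xi,1$. There $\lambda_2=1$, so ``contains $\lambda_2 I$'' says nothing about $\xi^{-1}g$. This is a real issue, not a formality. Adjoining to $G(m,m,2)$, $m$ odd, the scalars coming from its hidden reflections gives a group of order $2m^2$ that still omits $-s$ for every reflection $s$. What you need is Proposition \ref{maximalrefcount}: a maximal rank $2$ group has exactly $2(|G_{\rm coll}|-1)$ reflections. That is, both reflection representatives of every nontrivial collineation lie in $G$, which is exactly the closure property your injectivity argument uses.

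\textbf{The anticipated obstacle for $G(ka,k,d)$ does not occur.} We have $|G(ka,k,d)_{\rm coll}|=d!\,k^{d-2}a^{d-1}$. Suppose the orders agree for $(k,a)\ne(k',a')$ with $k>k'$. Since $\gcd(d-2,d-1)=1$, $k/k'$ would then be a $(d-1)$-th power of a rational number greater than $1$. Its reduced numerator, a divisor of $k$, would be at least $2^{d-1}>d\ge k$, which is impossible. Your injectivity argument makes this check unnecessary anyway.
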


Given their prominence,
one might develop an indexing for the $17$ maximal
primitive complex reflection groups $G$ which reflects their properties, e.g.,
$M_{d,|G|}^{(k_1,\ldots,k_m)}$, 
where $d$ is the rank of $G$, $|G|$ its order, and $n_1C_{k_1},\ldots,n_m C_{k_m}$ is its reflection type, i.e.,
\begin{equation}
\label{Maximalprimitive-notation}
M^{(2,3,3)}_{2,144}=G_{7}, \quad M^{(2,4,3)}_{2,576}=G_{11}, \quad 
M^{(2,3,5)}_{2,3600}=G_{19}, \quad\ldots\quad  M_{8,696729600}^{(2)}=G_{37}.
\end{equation}
The rank and order uniquely defines a maximal primitive complex reflection group.

We gave two ways to calculate all the reflection subgroups (illustrated in 
the most complicated case of $G_{11}$), 
i.e., as normal subgroups of the reflection subgroups
which are maximal reflection groups (Example \ref{reflectsubgroupsG11}
), and using ``dominos'' (Section \ref{domino-sect}
), which has the
advantage that reflection subgroups and their conjugates can be visualised.
Other results for complex reflection subgroups include:
\begin{itemize}
\item The splitting of the reflection orbit of a normal reflection subgroup
corresponds to a change of the collineation group
(Theorem \ref{splitorbitcollineation}, Corollary \ref{abeliansplitorbitscor}). 
\item The abelianisation corresponds to the reflection type (Lemma \ref{abelianisationlemma}).
\item The quotients by the normal reflection subgroups 
(which are reflection groups) were calculated (Theorem \ref{QuotientsTheorem}).
\end{itemize}

The (normal) reflection subgroups of the quaternionic reflection groups were
also investigated up to the point where the differences in their structural 
properties from those of the complex reflection groups became apparent.
In particular:
\begin{itemize}
\item Every quaternionic reflection group appears to be maximal.
\item The abelianisation has a weaker 
correspondence with the reflection type.
\item The reflections from one reflection orbit can generate 
	reflections on another orbit, and so 
it is not necessary to pick a generating reflection from each orbit.
\item The stabiliser of a reflection subgroup $R_a$ may not pointwise
stabilise it.
\end{itemize}
Heuristically, quaternionic reflection groups are less abelian than
complex reflection groups (with $K$ being an exception), and so 
they have fewer normal subgroups. The Lemma \ref{normalrefslatticelemma} 
provides a systematic way to calculate all the normal reflection subgroups
of the quaternionic reflection groups and to quantify the various
subsets of reflections that generate them (see \cite{W26b}).

One could also take the view that the multiple labels
(generating sets of reflections) that would give different 
normal reflection subgroups in the complex case, but give just a single
normal reflection subgroup in the quaternionic case (because of the
high noncommutatively of matrices over the quaternions), results in
there being
fewer normal reflection subgroups for quaternionic reflection groups
(than might be expected).

\vfil\eject
\bibliographystyle{alpha}
\bibliography{references}
\nocite{*}

\end{document}

rrrrr

%

\vfil\eject
\begin{figure}
\begin{tabular}[t]{ |  >{$}l<{$} | >{$}l<{$} | }
\hline
        &\\[-0.3cm]
\hbox{Maximal group} & \hbox{split normal subgroup} \\[0.1cm]
\hline
&\\[-0.3cm]
G_7=G^{(2,3,3)} & G(4,2,2)=G^{(2,1,1)} \\
G_{11}=G^{(2,4,3)} & G_7=G^{(1,2,3)} \\
 & G_5=G^{(1,1,3)} \\
        & G(4,2,2)=G^{(1,2,1)} \\
\hline
\end{tabular}
        \begin{tabular}[t]{ |  >{$}l<{$} | >{$}l<{$} | }
\hline
        &\\[-0.3cm]
\hbox{Group} & \hbox{split normal subgroup} \\[0.1cm]
\hline
&\\[-0.3cm]
G_8=G^{(4)} & G(4,2,2)=G^{(2)} \\
G_9=G^{(2,4)} & G(4,2,2)=G^{(1,2)} \\
G_{10}=G^{(3,4)} & G(4,2,2)=G^{(1,2)} \\
 & G_5=G^{(3,1)} \\
G_{13}=G^{(2,2)} & G(4,2,2)=G^{(1,2)} \\
G_{14}=G^{(2,3)} & G_5=G^{(1,3)} \\

G_{15}=G^{(2,2,3)} & G(4,2,2)=G^{(1,2,1)} \\
 & G_5 =G^{(1,1,3)} \\
 & G_7 =G^{(1,2,3)} \\
\hline
\end{tabular}
\caption{The normal subgroups of maximal reflection groups which have split orbits.}
\end{figure}

\subsection{Extra stuff}


\begin{itemize}
\item Determine the decomposition of the reflections which stabilise
	a given domino into dominos.
The exceptions found are the $4C_2$ domino of $12C_2$ fixed by $10$ reflections.
\item Is there a bijection from the ``domino type'' to the reflection 
subgroup conjugacy classes?
\item To be conjugate must have the same domino type, is this sufficient?
\item To be isomorphic the dominos must be the same (up to multiplicities).
Is this sufficient?
\item
The domino tiling determines whether it is normal (showing
if it has split orbits) or not (suggesting the number of conjugates).
It also suggests possible isomorphisms between nonconjugate subgroups.
\item Every reflection subgroup of $G_{11}$ is a (disjoint) union of dominos.
The reflections of every reflection subgroup of $G_{11}$ are a (disjoint) 
union of dominos.
\end{itemize}

What is the abelianisation of $G(m,p,d)$?
\begin{equation}
\label{G(n,p,d)ab} 
G(m,p,d)_{\rm ab} = C_2\times C_{\gcd(2,p)} \times C_{m\over p},
\end{equation}
$$ \bigl( {G^{(k_1,\ldots,k_m)}\over G^{(\ga_1,\ldots,\ga_m)}} \bigr)_{\rm ab}
=C_{k_1\over\ga_1}\times\cdots\times C_{k_m\over\ga_m}, $$

\begin{corollary}
The following are equivalent
\begin{enumerate}[\rm (a)]
\item $N$ is collineation preserving.
\item $N$ has no split orbits.
\item The number of reflection orbits of $N$ is $|\{j:\ga_j\ne1\}|$.
\item $\displaystyle {G\over N}$ is cyclic, i.e., 
$\displaystyle {|G|\over|N|}=\lcm({k_1\over\ga_1},\ldots,{k_m\over\ga_m})$, and is not a case in the Example \ref{G/NcyclicNotcollpreserving}.
\item The collineation groups of $N$ and $G$ have the same order, i.e.,
$\displaystyle {|H|\over|Z(H)|} = {|G|\over|Z(G)|}$, 
equivalently, in terms of the degrees
$$ {d_1^N d_2^N\cdots d_n^N\over\gcd(d_1^N,d_2^N,\ldots,d_n^N)}
={d_1^G d_2^G\cdots d_n^G\over\gcd(d_1^G,d_2^G,\ldots,d_n^G)}. $$
\item $|(G^\ga)_{\rm ab}| = \ga_1\ga_2\cdots\ga_m$.
\item $\displaystyle  {|G^{(k_1,\ldots,k_m)}|\over |G^{(\ga_1,\ldots,\ga_m)}|}
= \lcm({k_1\over\ga_1},\ldots,{k_m\over\ga_m}) $. 
\end{enumerate}
\end{corollary}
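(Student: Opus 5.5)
The plan is to route every condition through (b), which is the combinatorial condition that Theorem \ref{splitorbitcollineation} and Corollary \ref{abeliansplitorbitscor} already tie to the collineation group and to the abelianisation. The equivalences (a)$\Leftrightarrow$(b) and (b)$\Leftrightarrow$(f) are then immediate: the first is Theorem \ref{splitorbitcollineation}(i), and the second is Corollary \ref{abeliansplitorbitscor}.

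For (b)$\Leftrightarrow$(c), I will count $N$-orbits directly. The reflections of $N=G^\ga$ are exactly $\cup_{\ga_j\ne1}\hat R_{a_j}^G$ (Example \ref{Galphareflects}). Each such $G$-orbit splits into $\ell_j\ge1$ orbits under $N$, so $N$ has $\sum_{\ga_j\ne1}\ell_j$ reflection orbits. This sum equals $|\{j:\ga_j\ne1\}|$ if and only if every $\ell_j=1$, which is (b).

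For (a)$\Leftrightarrow$(e), I will use the injection $N/(N\cap Z(G))\hookrightarrow G/Z(G)$. Thus $N$ is collineation preserving exactly when the two collineation groups have equal order. When $N$ is irreducible, Schur's lemma gives $Z(N)=N\cap Z(G)$. The degree form of (e) then follows from $|Z(H)|=\gcd(d_1,\ldots,d_n)$ in (\ref{centreorder}) together with $|H|=d_1\cdots d_n$. If $N$ is reducible, its collineation group cannot equal that of the irreducible $G$. So I must also check that (e) fails in this case, or restrict (e) to irreducible $N$; I would state this restriction explicitly.

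For (d) and (g), I will start from the inequalities (\ref{quotientinequalities}) established in the proof of Theorem \ref{QuotientsTheorem}. Equality throughout (\ref{quotientinequalities}) is precisely the condition that $G/N$ is cyclic, that is, (g) and the displayed part of (d). The final paragraph of Theorem \ref{QuotientsTheorem} gives (a)$\Rightarrow$(g): there $G=\inpro{\zeta}N$, so $G/N$ is cyclic. For the converse I would invoke Example \ref{G/NcyclicNotcollpreserving}, which lists by direct collineation-order computation every case where $G/N$ is cyclic but $N$ is not collineation preserving. This yields (d)$\Rightarrow$(a) once those cases are excluded.

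The main obstacle is (g) as literally worded. The case $G_{10}/G_5\cong C_4$ gives $|G_{10}|/|G_5|=4=\lcm(4,1)$, yet $G_5$ has a split orbit. So (g) cannot be equivalent to (a) without the same exclusion clause as (d), and I would prove (g) with that clause carried over. Verifying that Example \ref{G/NcyclicNotcollpreserving} is exhaustive is the hardest step. I would do this case by case: first the finitely many primitive lattices (Figures \ref{G11-lattice}, \ref{G7-lattice}, \ref{G19-lattice}, \ref{Maximal2orbits}), then the families $G(m,p,d)$, using (\ref{Gmpdcollinorder}) and the quotients in Examples \ref{G(m,p,2)quotExample} and \ref{G(m,p,d)normalsgs}.
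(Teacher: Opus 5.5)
The paper states this corollary without proof, as a summary of Theorem \ref{splitorbitcollineation}, Corollary \ref{abeliansplitorbitscor}, Theorem \ref{QuotientsTheorem} and Example \ref{G/NcyclicNotcollpreserving}. Your route assembles exactly those pieces, but it breaks at the hub: (b)$\Rightarrow$(a) is false.

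The proof of Theorem \ref{splitorbitcollineation}(i) only shows that a split orbit forces a smaller collineation group, i.e.\ (a)$\Rightarrow$(b). The converse fails. Take $N=G(3,3,3)=G_{26}^{(2,1)}\lhd G_{26}$ (type $9C_2,12C_3$), the shaded subgroup in Figure \ref{Maximal2orbits}. By Lemma \ref{G(m,p,d)-orbits} with $d=3$, the nine reflections of $G(3,3,3)$ form a single $N$-orbit. So there is no split orbit, $N$ has one reflection orbit, and $|N_{\rm ab}|=2=\alpha_1\alpha_2$; hence (b), (c) and (f) hold. Yet $|N_{\rm coll}|=18\ne216=|(G_{26})_{\rm coll}|$ by (\ref{Gmpdcollinorder}). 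Also $|G_{26}|/|N|=24\ne3=\lcm(1,3)$, and $G_{26}/N\cong G_4$ is not cyclic. So (a), (d), (e) and (g) all fail, and $G(2,2,4)\lhd G_{28}$ behaves the same way.

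Likewise $G(3,3,3)\lhd G(3,1,3)$ satisfies (b), (c), (f) and (g) while not being collineation preserving. This pair is an instance of the $G(m,p,d)/G(m,q,d)$ entry of the very Example \ref{G/NcyclicNotcollpreserving} you planned to use. The underlying reason is that ``no split orbit'' only says $N$ is transitive on the root lines of its own reflections, and that cannot force $G=NZ(G)$. So Corollary \ref{abeliansplitorbitscor} really establishes (b)$\Leftrightarrow$(f), not (a)$\Leftrightarrow$(f).

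Your ingredients do prove the following:
\begin{itemize}
\item (b)$\Leftrightarrow$(c), since your orbit count is correct, and (b)$\Leftrightarrow$(f).
\item (a)$\Leftrightarrow$(e). Here the reducible case needs no restriction: for reducible $N$ one has $NZ(G)\ne G$ (else $G$ would be reducible), so $|N|/|Z(N)|\le|N_{\rm coll}|<|G_{\rm coll}|$.
\item (a)$\Rightarrow$(b) and (a)$\Rightarrow$(g).
\end{itemize}
However, (b) and (g) are independent, as $G_5\lhd G_{10}$ and $G(3,3,3)\lhd G_{26}$ show. So no exclusion clause on (g) alone can rescue a cycle passing through (b).

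The exhaustiveness check you flagged for (d)$\Rightarrow$(a) will also fail, because two cases are missing from Example \ref{G/NcyclicNotcollpreserving}:
\begin{itemize}
\item $G(4,2,2)=G_6^{(2,1)}\lhd G_6$ has $|G_6|/|G(4,2,2)|=3=\lcm(1,3)$ and $G_6/G(4,2,2)\cong C_3$ (Example \ref{G(4,2,2)quotsExample}), yet its collineation group is $C_2\times C_2$ rather than $A_4$.
\item $C_3\times C_3\lhd G(3,1,2)$ is reducible but has cyclic quotient $C_2$.
\end{itemize}

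The statement is therefore false as written. What can be proved is:
\begin{itemize}
\item (a), (e), and (d) with an enlarged exception list are equivalent;
\item (b), (c), (f) are equivalent;
\item (a) implies (b) and (g), but neither converse holds.
\end{itemize}
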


\begin{example}
Consider $G=G(m,p,d)$. The a generating set for the invariant
polynomials is
$$ e_k(x_1^m,\ldots,x_d^m), \qquad 1\le k\le d-1, \quad
	\hbox{($e_k$ the $k$-th elementary symmetric function)}, $$
of degrees $m,2m,\ldots,(d-1)m$, and 
	$$ f_d=(x_1x_2\cdots x_d)^{m\over p}. $$
Let's consider the special case $G(m,1,2)$. Then 
$$ I_+=(x^m+y^m,(xy)^m)=\{a_1(x^m+y^m)+a_2 x^my^m:a_1,a_2\in R\}, 
	\qquad R=\CC[x,y]^G, $$
$$ I_+^2=\bigl((x^m+y^m)^2,(xy)^{2m},(x^m+y^m)x^my^m \bigr)
	=\{a_1(x^m+y^m)^2+a_2 (xy)^{2m}+a_3 (x^m+y^m)x^my^m:a_1,a_2,a_3\in R\}, 
	\qquad R=\CC[x,y]^G, $$
$I_+$ is a maximal ideal? $R/I_+$ is a field? 
	I seems $I/I^2$ is called the {\bf cotangent space}.

Suppose $G=G(m,1,2)$ with $m$ even, then $N=G(m,2,2)$ is a normal subgroup,
with 
	$$ \CC[x,y]^N=\bigl(x^m+y^m,(xy)^{m\over2}\bigr).  $$
We have the following action of generators for $G$
$$ \pmat{0&1\cr1&0}\pmat{x\cr y}=\pmat{y\cr x}, \qquad
\pmat{0&\go\cr\overline{\go}&0}\pmat{x\cr y}=\pmat{\go y\cr\overline{\go}x}, \qquad
\pmat{\go&0\cr0&1}\pmat{x\cr y}=\pmat{\go x\cr y}, $$
so that $x^m+y^m$ is fixed, and the last reflection maps
	$$ (xy)^{m\over2}\mapsto (\go x y)^{m\over2}=-(xy)^{m\over2}. $$
so on $\spam_\CC\{x^m+y^m,(xy)^{m\over2}\}$, $G/N$ acts reducibly 
as $C_1\times C_2=C_2$. 

Lets consider the three orbit case $p$ is even $p\ne m$. 
The invariants of $G(m,p',2)$, $p\divides p'$, $p'\divides m$  are
$$ x^m+y^m, \qquad (xy)^{m\over p'}. $$
We have
	$$ \pmat{\go^p&0\cr0&1}\pmat{x\cr y}=\pmat{\go^p x\cr y}
	\Implies x^m+y^m\mapsto (\go^p x)^m+y^m, \qquad
	(xy)^{m\over p'}\mapsto (\go^p xy)^{m\over p'}= \go^{p m\over p'} (xy)^{m\over p'}
	. $$
\end{example}

Check whether the quotient of a quaternion reflection group by a 
normal reflection subgroup is (abstractly equal to) a quaternionic
reflection group.

\bibliographystyle{alpha}
\bibliography{references}
\nocite{*}

\end{document}